\documentclass[aos,preprint]{imsart}

\RequirePackage[OT1]{fontenc}
\RequirePackage{amsthm,amsmath,natbib}
\RequirePackage[colorlinks,citecolor=blue,urlcolor=blue]{hyperref}

\usepackage{graphicx}

\setattribute{journal}{name}{}

\startlocaldefs
\numberwithin{equation}{section}
\theoremstyle{plain}

\endlocaldefs



\textwidth 15cm \oddsidemargin 0.3cm \evensidemargin 0.3cm
\topmargin 0.5cm \textheight 22.5cm \voffset -1.5cm \baselineskip 1cm

\RequirePackage[colorlinks]{hyperref}
\RequirePackage{hypernat}

\usepackage[psamsfonts]{amsfonts}

\usepackage{pdflscape}

\usepackage{dsfont}
 
\defcitealias{Cut2015}{CPV15}

\newtheorem{Lem}{Lemma}[section]
\newtheorem{Prop}{Proposition}[section]
\newtheorem{Theor}{Theorem}[section]



\newcommand{\cqfd}{\hfill $\square$}

\newcommand{\R}{\mathbb R}

\newcommand{\n}{^{(n)}}

\newcommand{\Xb}{\mathbf{X}}
\newcommand{\Sb}{\mathbf{S}}

\newcommand{\Vb}{\mathbf{V}}

\newcommand{\Tb}{\ensuremath{\mathbf{T}}}

\newcommand{\xb}{\ensuremath{\mathbf{x}}}

\newcommand{\Ob}{\ensuremath{\mathbf{O}}}

\newcommand{\Ub}{\ensuremath{\mathbf{U}}}

\newcommand{\thetab}{{\pmb \theta}}

\newcommand{\Sigb}{{\pmb \Sigma}}

\newcommand{\Deltab}{{\pmb \Delta}}
\newcommand{\taub}{{\pmb \tau}}

\newcommand{\betab}{{\pmb \beta}}

\newcommand{\pr}{^{\prime}}





\newcommand{\ny}{n\rightarrow\infty}




\begin{document}

\begin{frontmatter}
\title{Testing Uniformity on High-Dimensional Spheres 
\mbox{against 
Monotone 
Rotationally Symmetric Alternatives}}
\runtitle{Testing Uniformity on High-Dimensional Spheres}

\begin{aug}
\author{\fnms{Christine} \snm{Cutting}\ead[label=e1]{Christine.Cutting@ulb.ac.be}},
\author{\fnms{Davy} \snm{Paindaveine}\thanksref{t2}\ead[label=e2]{dpaindav@ulb.ac.be}
\ead[label=u2,url]{http://homepages.ulb.ac.be/\textasciitilde dpaindav}}
\and
\author{\fnms{ Thomas} \snm{Verdebout}
\ead[label=e3]{tverdebo@ulb.ac.be}
\ead[label=u3,url]{http://tverdebo.ulb.ac.be}}

\thankstext{t2}{Research is supported by an \mbox{A.R.C.} contract from the Communaut\'{e} Fran\c{c}aise de Belgique and by the IAP research network grant \mbox{nr.} P7/06 of the Belgian government (Belgian\vspace{1mm}
Science Policy).}
\runauthor{Chr. Cutting, D. Paindaveine, and Th. Verdebout}

\affiliation{Universit\'{e} libre de Bruxelles}

\address{Universit\'{e} libre de Bruxelles\\
D\'{e}partement de Math\'{e}matique\\
Bld du Triomphe, 
Campus Plaine, CP210\\
B-1050, Brussels\\
Belgium\\
\printead{e1}\\
\phantom{E-mail:\ }\printead*{e3}\\
\printead{u3}}

\address{Universit\'{e} libre de Bruxelles\\
ECARES and D\'{e}partement de Math\'{e}matique\\
Avenue F.D. Roosevelt, 50\\
ECARES, CP114/04\\
B-1050, Brussels\\
Belgium\\
\printead{e2}\\
\printead{u2}}
\end{aug}

\vspace{4mm}
\begin{abstract}
We consider the problem of testing uniformity on high-dimen\-sional unit spheres. We are primarily interested in non-null issues. We show that rotationally symmetric alternatives lead to two Local Asymptotic Normality (LAN) structures. The first one is for fixed modal location~$\thetab$ and allows to derive locally asymptotically most powerful tests under specified~$\thetab$. The second one, that addresses the Fisher--von Mises--Langevin (FvML) case, relates to the unspecified-$\thetab$ problem and shows that the high-dimensional Rayleigh test is locally asymptotically most powerful invariant. Under mild assumptions, we derive the asymptotic non-null distribution of this test, which allows to extend away from the FvML case the asymptotic powers obtained there from Le Cam's third lemma. Throughout, we allow the dimension~$p$ to go to infinity in an arbitrary way as a function of the sample size~$n$. Some of our results also strengthen the local optimality properties of the Rayleigh test in low dimensions. We perform a Monte Carlo study to illustrate our asymptotic results. Finally, we treat an application related to testing for sphericity in high dimensions. 
\vspace{-1mm}
\end{abstract}

\begin{keyword}[class=MSC]
\kwd[Primary ]{62H11}
\kwd{62G20}
\kwd[; secondary ]{62H15}
\end{keyword}

\begin{keyword}
\kwd{Contiguity}
\kwd{directional statistics}
\kwd{high-dimensional statistics}
\kwd{invariance}
\kwd{local asymptotic normality}
\kwd{rotationally symmetric distributions}
\kwd{tests of uniformity}
\end{keyword}

\end{frontmatter}


\section{Introduction}

In directional statistics, inference is based on $p$-variate observations lying on the unit sphere~$\mathcal{S}^{p-1}:=\{\xb\in\R^{p} : \|\xb\|=\sqrt{\xb'\xb}=1\}$. This is relevant in various situations. (i) First,  the original data themselves may belong to~$\mathcal{S}^{p-1}$; classical examples involve wind direction data ($p=2$) or spatial data at the earth scale $(p=3)$. 
\linebreak
(ii) Second, some fields by nature are so that only the relative magnitude of the observations is important, which leads to projecting observations onto~$\mathcal{S}^{p-1}$. In shape analysis, for instance, this projection only gets rid of an overall scale factor related to the (irrelevant) object size. (iii) Finally, even in inference problems where the full (Euclidean) observations in principle need to be considered, a common practice in nonparametric statistics is to restrict to sign procedures, that is, to procedures that are measurable with respect to the projections of the observations onto~$\mathcal{S}^{p-1}$; see, e.g.,  \cite{Oja2010} and the references therein. 

While~(i) is obviously restricted to small dimensions~$p$,~(ii)-(iii) nowadays increasingly involve high-dimensional data. For~(ii), high-dimensional directional data were considered in~\cite{Dry2005}, with applications in brain shape modeling; in text mining, \cite{banerjee2003generative} and \cite{BanGho2004} project high-dimensional data on unit spheres to discard text sizes when performing clustering. As for~(iii), the huge interest raised by high-dimensional statistics in the last decade has made it natural to consider high-dimensional \emph{sign} tests. In particular, \cite{Zouetal2013} 
recently considered the high-dimensional version of the \cite{HP06} sign tests of sphericity, whereas an extension to the high-dimensional case of the location sign test from \cite{Cha1992} and \cite{mooj95} was recently proposed in \cite{Runze2015}. Considering~(iii) in high dimensions is particularly appealing since for moderate-to-large~$p$, sign tests show excellent (fixed-$p$) efficiency properties (see \cite{PaiVer2015} for details). Also, the concentration-of-measure phenomenon may make the restriction to signs virtually void as the dimension~$p$ increases. 

In this paper, we consider the problem of testing uniformity on the unit sphere~$\mathcal{S}^{p-1}$, both in low and high dimensions. In low dimensions, this is a fundamental problem that has been extensively treated; see \cite{MarJup2000} and the references therein. 
The high-dimensional version of the problem is less standard, yet also has some history. \cite{cueetal2009} proposed a test of uniformity that performs well empirically even in high dimensions, but no asymptotic results were obtained as~$p$ goes to infinity. \cite{Chi1991,Chi1993} explicitly considered high-dimensional testing for uniformity on the sphere, 
 in a fixed-$n$ large-$p$ framework, while  \cite{Caietal2013} rather adopted a double asymptotic approach for the same problem. Possible applications of testing uniformity on high-dimensional spheres include outlier detection; see \cite{JuaPri2001}. Other natural applications are related with testing for sphericity in~$\R^p$, in the spirit of~(iii) above; in Section~\ref{realsec}, we will elaborate on this and provide references.

To be more specific, assume that the observations form of a triangular array of random vectors $\Xb_{ni}$, $i=1,\ldots,n$, $n=1,2,\ldots,$ where, for any~$n$, the~$\Xb_{ni}$'s are mutually independent and share a common distribution on the unit sphere~$\mathcal{S}^{p_n-1}$, and consider the problem of testing the null hypothesis~$\mathcal{H}_{0n}$ that this common distribution is the uniform over~$\mathcal{S}^{p_n-1}$. While our main interest is in the high-dimensional case ($p_n\to\infty)$, most of our results will also address the (low-dimensional) classical fixed-$p$ case ($p_n=p$ for all~$n$). The most classical test of uniformity is the \cite{Ray1919} 
test, that rejects~$\mathcal{H}_{0n}$ for large values of
$
R_n
:= 
np_n \| \bar{\Xb}_n\|^2
,
$ 
where $\bar{\Xb}_n:=\frac{1}{n} \sum_{i=1}^n \Xb_{ni}$. For fixed~$p$, the test is based on the null asymptotic $\chi^2_p$ distribution of~$R_n$. In the high-dimensional setup, \cite{PaiVer2015} obtained the following asymptotic normality result under the null.

\begin{Theor}
\label{raylnull}
Let $(p_n)$ be a sequence of positive integers diverging to~$\infty$ as $\ny$. 
Assume that the triangular array $\Xb_{ni}$, $i=1,\ldots,n$, $n=1,2,\ldots,$ is such that, for any $n$, $\Xb_{n1},\Xb_{n2},\ldots,\Xb_{nn}$ form a random sample from the uniform distribution on~$\mathcal{S}^{p_n-1}$. Then  
\vspace{1mm}
\begin{equation}
\label{raylhdstat}
{R}_n^{\rm St}
:=
\frac{R_n-p_n}{\sqrt{2 p_n}}
=
\frac{\sqrt{2p_n}}{n} 
\sum_{1\leq i< j\leq n}
 \Xb_{ni}\pr \Xb_{nj}
\,
\stackrel{\mathcal{D}}{\to}
\,
\mathcal{N}(0,1)
\vspace{-2mm}
\end{equation}
as~$n\to\infty$, where~$\stackrel{\mathcal{D}}{\to}$ denotes weak convergence.
\end{Theor}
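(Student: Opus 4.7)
Since $R_n^{\rm St}$ is a centered, degenerate $U$-statistic with kernel $\xb\pr\yb$, a martingale central limit theorem is the natural tool. Introducing the filtration $\mathcal{F}_{n,k}:=\sigma(\Xb_{n1},\ldots,\Xb_{nk})$ and the partial sums $S_{n,k}:=\sum_{i=1}^{k}\Xb_{ni}$, I would rewrite
\begin{equation*}
R_{n}^{\rm St}=\sum_{j=2}^{n}\xi_{nj},
\qquad
\xi_{nj}:=\frac{\sqrt{2p_{n}}}{n}\,\Xb_{nj}\pr S_{n,j-1}.
\end{equation*}
Under uniformity on~$\mathcal{S}^{p_n-1}$, one has $E[\Xb_{nj}]=\mathbf{0}$ and $E[\Xb_{nj}\Xb_{nj}\pr]=p_n^{-1}I_{p_n}$, so $(\xi_{nj})$ is a martingale difference array with conditional variance $E[\xi_{nj}^{2}\mid\mathcal{F}_{n,j-1}]=(2/n^{2})\|S_{n,j-1}\|^{2}$. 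It then remains to verify the two standard conditions of the martingale CLT.

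For the normalization of the conditional variance, let $V_{n}:=\sum_{j=2}^{n}E[\xi_{nj}^{2}\mid\mathcal{F}_{n,j-1}]$. Since $E\|S_{n,k}\|^{2}=k$, one gets $E[V_{n}]=(n-1)/n\to 1$. Expanding $\|S_{n,k}\|^{2}$ as a double sum and using that, by independence and isotropy, $E[(\Xb_{ni}\pr\Xb_{nj})(\Xb_{nk}\pr\Xb_{n\ell})]$ vanishes unless $\{i,j\}=\{k,\ell\}$ (and equals $1/p_n$ otherwise), a short computation yields $\mathrm{Var}(V_{n})=O(1/p_{n})$, so that $V_{n}\stackrel{P}{\to} 1$ thanks to $p_n\to\infty$.

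For the Lyapunov condition $\sum_{j=2}^{n}E[\xi_{nj}^{4}]\to 0$, I would condition on $\mathcal{F}_{n,j-1}$ and invoke the rotational-invariance identity $E[(\Xb_{nj}\pr v)^{4}]=3\|v\|^{4}/(p_{n}(p_{n}+2))$, valid for any deterministic $v\in\R^{p_n}$. Combined with the elementary bound $E\|S_{n,k}\|^{4}=k^{2}+2k(k-1)/p_n=O(k^2)$, this gives $E[\xi_{nj}^{4}]=O((j-1)^{2}/n^{4})$ and hence $\sum_{j=2}^n E[\xi_{nj}^{4}]=O(1/n)$. A standard martingale CLT (e.g.\ Hall and Heyde, Corollary~3.1) then delivers the announced $\mathcal{N}(0,1)$ limit. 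The only genuine obstacle is careful bookkeeping of the $p_n$-factors: the $1/p_n$ from $E[\Xb\Xb\pr]$ and the $1/(p_n(p_n+2))$ from the fourth-moment identity must match exactly the $\sqrt{2p_n}/n$ normalization, but once the relevant moments of $\|S_{n,k}\|^{2}$ are computed the remaining sums are elementary.
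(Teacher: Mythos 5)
Your proof is correct and follows essentially the same route as the paper: the theorem is quoted from Paindaveine and Verdebout (2015), and both that proof and the paper's own Appendix~B proof of the more general non-null Theorem~\ref{maintheorempower} (which reduces to your setting when $e_{n1}=0$, $\tilde e_{n2}=1/p_n$) use exactly the martingale-difference decomposition $\xi_{nj}\propto\Xb_{nj}\pr S_{n,j-1}$ together with a martingale CLT (Billingsley, Theorem~35.12), verifying convergence of the summed conditional variances and a Lindeberg-type condition via fourth moments. Your moment computations are accurate — $E\|S_{n,k}\|^2=k$, $E\|S_{n,k}\|^4=k^2+2k(k-1)/p_n$, $E[(\Xb_{nj}\pr v)^4]=3\|v\|^4/(p_n(p_n+2))$ — and the resulting bounds ($\mathrm{Var}(V_n)=O(1/p_n)$, which is where $p_n\to\infty$ enters, and $\sum_j E[\xi_{nj}^4]=O(1/n)$, which implies the Lindeberg condition) suffice.
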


\newpage
Denoting by~$\Phi(\cdot)$ the cumulative distribution function of the standard normal, the high-dimensional Rayleigh test ($\phi\n$, say) then rejects~$\mathcal{H}_{0n}$ at asymptotic level~$\alpha$ whenever
\begin{equation}
\label{RayHDtraindelavie}
{R}_n^{\rm St}
>
z_\alpha,
\quad
\textrm{ with } 
z_\alpha:=\Phi^{-1}(1-\alpha).
\end{equation}
Remarkably, this test does not impose any condition on the way $p_n$ goes to infinity with~$n$, hence can be applied as soon as~$n$ and~$p_n$ are large, without bothering about their relative magnitude (in contrast, most results in high-dimensional statistics typically impose that~$p_n/n\to c$ for some~$c>0$). Theorem~\ref{raylnull}, however, is not sufficient to justify resorting to the Rayleigh test~: the trivial test, that would discard the data and reject~$\mathcal{H}_{0n}$ with probability~$\alpha$, has indeed the same asymptotic null behaviour as the high-dimensional Rayleigh test, yet has a power function that is uniformly equal to the nominal level~$\alpha$. One of the main goals of this paper is to study the non-null behaviour of the Rayleigh test and to show that this test actually enjoys nice optimality properties, both in the low- and high-dimensional cases. Optimality throughout will be in the Le Cam sense, in relation with the Local Asymptotic Normality (LAN) structures of the models we adopt below.

The outline of the paper is as follows. In Section~\ref{contiguitysec}, we define a class of alternatives to the null of uniformity that skew the probability mass along a ``modal direction"~$\thetab$, and we identify the corresponding contiguous alternatives. In Section~\ref{oraclesec}, we provide a LAN result for fixed~$\thetab$, which leads to locally asymptotically most powerful tests under specified~$\thetab$. We address the \mbox{un\mbox{specified-$\thetab$}} problem through invariance arguments in Section~\ref{invariantsec}, which, in the FvML case, provides a second LAN result and shows that the high-dimensional Rayleigh test is locally asymptotically most powerful invariant. In Section~\ref{Rayleighsec}, we derive the asymptotic distribution of the high-dimensional Rayleigh test under general rotationally symmetric alternatives and comment on the resulting limiting powers. In Section~\ref{simusec}, we illustrate our asymptotic results through simulations. In Section~\ref{realsec}, we link the problem considered to that of testing for sphericity in high dimensions and we treat a real data example. In Section~\ref{conclusec}, we summarize the main findings of the paper and discuss some perspectives for future reseach. Finally, the appendix and the supplementary article \cite{Cut2015} collect technical proofs.

\section{Contiguous rotationally symmetric alternatives}
\label{contiguitysec}

Throughout, we consider specific alternatives to the null of uniformity over the $p$-dimensional unit sphere~$\mathcal{S}^{p-1}$, namely rotationally symmetric alternatives. A $p$-dimensional vector~$\Xb$ is said to be \emph{rotationally symmetric about}~$\thetab(\in\mathcal{S}^{p-1})$ if and only if $\mathbf{O}\Xb$ is equal in distribution to~$\Xb$ for any orthogonal $p\times p$ matrix~$\mathbf{O}$ satisfying $\mathbf{O}\thetab=\thetab$; see, e.g., \cite{saw1978}. Such distributions are fully characterized by the location parameter~$\thetab$ and the cumulative distribution function~$F$ of~$\Xb'\thetab$. The null of uniformity (under which~$\thetab$ is not identifiable) is obtained for
\begin{equation}
\label{unifcdf}
F_p(t) 
:=
c_p
\int_{-1}^t (1-s^2)^{(p-3)/2}\,ds 
,
\ 
\textrm{ with }
c_{p}
:=
\frac{\Gamma\big(\frac{p}{2}\big)}{\sqrt{\pi}\,\Gamma\big(\frac{p-1}{2}\big)}
,
\end{equation}
where~$\Gamma(\cdot)$ is the Euler Gamma function.
Particular alternatives are given, e.g., by the so-called Fisher--von Mises--Langevin (FvML) distributions, that correspond to
 \begin{equation}
\label{fvmlcdf}
F_{p,\kappa}^{\rm FvML}(t)
:=
c^{\rm FvML}_{p,\kappa}\int_{-1}^t (1-s^2)^{(p-3)/2}\exp(\kappa s)\,ds 
, 
\ 
\textrm{ with }
c^{\rm FvML}_{p,\kappa}
:=
\frac{(\kappa/2)^{\frac{p}{2}-1}}{\sqrt{\pi}\,\Gamma(\frac{p-1}{2})\mathcal{I}_{\frac{p}{2}-1}(\kappa)}
,
\end{equation}
where~$\mathcal{I}_\nu(\cdot)$ is the order-$\nu$ modified Bessel function of the first kind and~$\kappa(>0)$ is a \emph{concentration} parameter (the larger the value of~$\kappa$, the more concentrated about~$\thetab$ the distribution is); see \cite{MarJup2000} for further details.  

In Sections~\ref{contiguitysec} to~\ref{invariantsec}, we actually restrict to ``monotone" rotationally symmetric densities (with respect to the surface area measure on~$\mathcal{S}^{p-1}$) of the form
\begin{equation}
\label{densconc}
\xb\mapsto c_{p,\kappa,f} f(\kappa\, \xb'\thetab)
,
\qquad
\xb\in\mathcal{S}^{p-1},
\end{equation}
where~$\thetab(\in\mathcal{S}^{p-1})$ is a location parameter,~$\kappa(>0)$ is a concentration parameter, and the function~$f:\R\to\R^+$ is monotone strictly increasing, differentiable at~$0$, and satisfies~$f(0)=f'(0)=1$. These conditions on~$f$, that will be tacitly assumed throughout, guarantee identifiability of~$\thetab$,~$\kappa$ and~$f$~: clearly, the strict monotonicity of~$f$ implies that~$\thetab$ is the modal location on~$\mathcal{S}^{p-1}$, whereas the constraint~$f'(0)=1$ allows to identify~$\kappa_n$ and~$f$. 
%
Note that irrespective of~$f$, the boundary value~$\kappa=0$ corresponds to the uniform distribution over~$\mathcal{S}^{p-1}$. It is well-known that, if~$\Xb$ has density~(\ref{densconc}), then~$\Xb'\thetab$ has density  
$
t
\mapsto
c_{p,\kappa,f}  (1-t^2)^{(p-3)/2} f(\kappa t)\,\mathbb{I}[t\in[-1,1]]
$
(throughout~$\mathbb{I}[A]$ stands for the indicator function of the set or condition~$A$). This is compatible with the cumulative distribution functions in~(\ref{unifcdf})-(\ref{fvmlcdf}), and shows that
$
c_{p,\kappa,f}
=
1/\big( \int_{-1}^1 (1-t^2)^{(p-3)/2} f(\kappa t)\,dt \big)
.
$
Finally, note that~$f(\cdot)=f_{\rm FvML}(\cdot)=\exp(\cdot)$ provides the FvML distributions above.

As announced in the introduction, we consider triangular arrays of observations~$\Xb_{ni}$, $i=1,\ldots,n$, $n=1,2,\ldots$ where the random vectors~$\Xb_{ni}$, $i=1,\ldots,n$ take values in~$\mathcal{S}^{p_n-1}$. More specifically, for any~$\thetab_n\in\mathcal{S}^{p_n-1}$, $\kappa_n>0$ and~$f$ as above, we will denote as~${\rm P}\n_{\thetab_n,\kappa_n,f}$ the hypothesis under which~$\Xb_{ni}$, $i=1,\ldots,n$ are mutually independent and share the common density~$\xb\mapsto c_{p_n,\kappa_n,f} f(\kappa_n\, \xb'\thetab_n)$. Note that larger values of~$\kappa_n$ provide  increasingly severe deviations from the null of uniformity, which is obtained as~$\kappa_n$ goes to zero. Denoting the null hypothesis as~${\rm P}\n_{0}$, it is then natural to wonder whether or not  ``appropriately small" sequences~$\kappa_n$ make~${\rm P}\n_{\thetab_n,\kappa_n,f}$ and~${\rm P}\n_{0}$ mutually contiguous. The following result answers this question (see Appendix~\ref{appA} for a proof). 
\vspace{1mm}

\begin{Theor}
\label{contigtheor}
Let~$(p_n)$ be a sequence in~$\{2,3,\ldots\}$. Let $(\thetab_n)$ be a sequence such that $\thetab_n\in\mathcal{S}^{p_n-1}$ for all~$n$, $(\kappa_n)$ be a positive sequence such that $\kappa_n^2=O(\frac{p_n}{n})$, and assume that $f$ is twice differentiable at~$0$. Then, the sequence of alternative hypotheses~${\rm P}\n_{\thetab_n,\kappa_n,f}$ and the null sequence~${\rm P}\n_{0}$ are mutually contiguous.
\end{Theor}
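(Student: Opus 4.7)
\emph{Proof plan.} The plan is to apply Le Cam's first lemma to the log-likelihood ratio $\Lambda_n := \log(d{\rm P}\n_{\thetab_n,\kappa_n,f}/d{\rm P}\n_{0})$. Since $\kappa_n^2 n/p_n$ is bounded, along any subsequence one can extract a further subsequence with $\kappa_n^2 n/p_n \to c^2 \in [0,\infty)$, and it is enough to establish, along every such subsequence, that $\Lambda_n \stackrel{\mathcal{D}}{\to} \mathcal{N}(-c^2/2, c^2)$ under ${\rm P}\n_{0}$; mutual contiguity then follows from the identity $E[\exp \mathcal{N}(-c^2/2,c^2)] = 1$, together with Le Cam's first and third lemmas.

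Writing $T_{ni} := \Xb_{ni}'\thetab_n$, the log-likelihood decomposes as
\begin{equation*}
\Lambda_n
=
-n \log E_0[f(\kappa_n T_{n1})]
+
\sum_{i=1}^n \log f(\kappa_n T_{ni}),
\end{equation*}
where, under ${\rm P}\n_{0}$, $T_{n1}$ has the density $c_{p_n}(1-t^2)^{(p_n-3)/2}\mathbb{I}[|t|\le 1]$, so that $E_0[T_{ni}]=0$ and $E_0[T_{ni}^2]=1/p_n$. Since $f(0)=f'(0)=1$ and $f$ is twice differentiable at $0$, one has the Taylor expansions $f(u) = 1 + u + \tfrac{1}{2}f''(0)u^2 + o(u^2)$ and $\log f(u) = u + \tfrac{1}{2}(f''(0)-1)u^2 + o(u^2)$ as $u\to 0$. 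Substituting these into the two pieces of $\Lambda_n$ and computing term by term should give (i) a deterministic drift $-n\log E_0[f(\kappa_n T_{n1})] = -\tfrac{1}{2}f''(0)n\kappa_n^2/p_n + o(1) \to -f''(0)c^2/2$; (ii) a linear part $\kappa_n\sum_i T_{ni}$ that, by the central limit theorem, tends to $\mathcal{N}(0,c^2)$ (its variance being $\kappa_n^2 n/p_n \to c^2$); and (iii) a quadratic part $\tfrac{1}{2}(f''(0)-1)\kappa_n^2\sum_i T_{ni}^2$ that concentrates on $(f''(0)-1)c^2/2$ (since its variance is $O(n\kappa_n^4/p_n^2) = O(1/n)\to 0$, using $\mathrm{Var}(T_{n1}^2)=O(1/p_n^2)$). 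Summing the three pieces, the $f''(0)$ contributions cancel and one recovers $\Lambda_n \stackrel{\mathcal{D}}{\to} \mathcal{N}(-c^2/2,c^2)$, as required.

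The main technical obstacle is the uniform justification of the above Taylor expansions, because $\kappa_n$ need not tend to zero (for instance if $p_n/n$ is unbounded), so that $\kappa_n T_{ni}$ is not pointwise small. I would handle this by truncation at a level $\delta_n$ satisfying both $\kappa_n\delta_n \to 0$ and $\sqrt{p_n}\,\delta_n \to \infty$ --- the choice $\delta_n := n^{1/4}/\sqrt{p_n}$ works under $\kappa_n^2 \leq Cp_n/n$ --- and by splitting each $T_{ni}$-term according to $\mathbb{I}[|T_{ni}|\le \delta_n]$. On the ``good'' event, $|\kappa_n T_{ni}|$ is uniformly $o(1)$, so the Taylor remainders, each bounded by $\kappa_n^2 T_{ni}^2 \sup_{|u|\le \kappa_n\delta_n}|\eta(u)|$ with $\eta(u)\to 0$, sum to $o_P(1)$. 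On the ``bad'' event, the sharp Gaussian-type concentration of $T_{n1}$ --- formally via $(1-t^2)^{(p_n-3)/2}\le \exp(-(p_n-3)t^2/2)$ --- yields $n\,\mathrm{P}_0(|T_{n1}|>\delta_n) \lesssim n\exp(-c\sqrt{n}) \to 0$, so these contributions are negligible. The same truncation argument controls the analogous expansion of $E_0[f(\kappa_n T_{n1})]$ inside the drift, after which the CLT and concentration steps are routine.
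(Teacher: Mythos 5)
Your proposal is correct in substance and follows the same master plan as the paper: reduce, along subsequences with $n\kappa_n^2/p_n\to c^2\in[0,\infty)$, to showing that the log-likelihood ratio is asymptotically $\mathcal{N}(-c^2/2,c^2)$ under ${\rm P}\n_0$, and conclude by Le Cam's first lemma (the paper phrases this as ${\rm P}[\exp(Y)=0]=0$ and ${\rm E}[\exp(Y)]=1$, after treating the case $\kappa_n^2=o(p_n/n)$ by an $L^2$ argument). Where you genuinely differ is in the technical core. The paper never Taylor-expands the summands pathwise: it keeps the exact variables $\log f(\kappa_n \Xb_{ni}\pr\thetab_n)$, expands their moments $E_{n1},E_{n2},E_{n4}$ and the normalizing constant through a single integral lemma (Lemma~\ref{lemcontig}, an approximate-$\delta$-sequence expansion of $c_{p_n}\int_{-1}^1(1-s^2)^{(p_n-3)/2}g(\kappa_n s)\,ds$, valid because the null law of $\Xb_{ni}\pr\thetab_n$ concentrates at scale $p_n^{-1/2}=o(1/\kappa_n)$), and then applies the Lindeberg CLT directly to the centered, standardized summands, checking Lindeberg via Cauchy--Schwarz/Chebyshev and the identity $((\log f)^4)''(0)=0$. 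You instead expand $\log f(\kappa_n T_{ni})$ pathwise into linear, quadratic and Peano-remainder pieces under the truncation $|T_{ni}|\le\delta_n$ with $\kappa_n\delta_n\to0$ and $\sqrt{p_n}\,\delta_n\to\infty$, apply the CLT only to the linear piece and Chebyshev concentration to the quadratic one; the limit $\mathcal{N}(-c^2/2,c^2)$ then comes from the cancellation of the $f''(0)$ contributions, exactly as in the paper's computation of $L_{n1}$ and $nV_n$. Your route makes the probabilistic mechanism more transparent; the paper's route funnels everything through one reusable lemma (which it also needs for Theorem~\ref{LANtheor} and for the expansions~(\ref{genexpande1})--(\ref{genexpande2})) and avoids any truncation of the random sum.

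One place where your sketch is thinner than it looks is the deterministic drift $-n\log{\rm E}_0[f(\kappa_n T_{n1})]$. There the function to be controlled on the bad event is $f$ itself, not a Peano remainder of $\log f$, evaluated at arguments as large as $\kappa_n$, and $\kappa_n$ may diverge when $p_n\gg n$; the natural bound ${\rm E}_0\big[f(\kappa_n T_{n1})\,\mathbb{I}[|T_{n1}|>\delta_n]\big]\le f(\kappa_n)\,{\rm P}_0[|T_{n1}|>\delta_n]$ is $o(1/n)$ only if the growth of $f$ is beaten by the sub-Gaussian tail $e^{-c\sqrt{n}}$, a point your remark that ``the same truncation argument controls'' the drift does not address. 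This is exactly the content of the paper's Lemma~\ref{lemcontig}, whose own proof also treats the tail of the $\delta$-sequence rather briskly, so you are at a comparable level of rigour; but if you write your argument up, this expectation --- not the random sum, which your truncation handles cleanly --- is where the real work sits. Two smaller bookkeeping points: the density of $T_{n1}$ carries the factor $c_{p_n}\sim\sqrt{p_n/(2\pi)}$, which your tail bound should display (it is absorbed by the Gaussian tail integral, so the conclusion $n\,{\rm P}_0[|T_{n1}|>\delta_n]\to0$ survives for arbitrary $p_n$), and the inequality $(1-t^2)^{(p_n-3)/2}\le e^{-(p_n-3)t^2/2}$ is only needed (and only valid) for $p_n\ge3$, which is harmless since the bad event is empty whenever $\delta_n\ge1$, i.e.\ whenever $p_n\le\sqrt{n}$.
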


This contiguity result covers both the low- and high-dimensional cases. In the low-dimensional case, the usual parametric rate $\kappa_n\sim 1/\sqrt{n}$ provides contiguous alternatives, which implies that, irrespective of~$f$, there exist no consistent tests for~$\mathcal{H}_{0n}: \{{\rm P}\n_{0}\}$ against~$\mathcal{H}_{1n}: \{{\rm P}\n_{\thetab_n,\kappa_n,f}\}$ if $\kappa_n=\tau/\sqrt{n}$, $\tau>0$. The high-dimensional case is more interesting. First, we stress that the contiguity result in Theorem~\ref{contigtheor} does not impose conditions on~$p_n$, hence in particular applies when~(a) $p_n/n\to c$ for some~$c>0$ or (b) $p_n/n\to\infty$. Interestingly, the result shows that contiguity in cases~(a)-(b) can be achieved for sequences~($\kappa_n$) that do not converge to zero~: a constant sequence~($\kappa_n$) ensures contiguity in case~(a), whereas contiguity in case~(b) may even be obtained for a sequence ($\kappa_n$) that diverges to infinity in a suitable way. In both cases, there then exist no consistent tests for~$\mathcal{H}_{0n}: \{{\rm P}\n_{0}\}$ against the corresponding sequences of alternatives~$\mathcal{H}_{1n}: \{{\rm P}\n_{\thetab_n,\kappa_n,f}\}$, despite the fact that the sequences~$(\kappa_n)$ are not~$o(1)$. This may be puzzling at first since such sequences are expected to lead to severe alternatives to uniformity; it actually makes sense, however, that the fast increase of the dimension~$p_n$, despite the favorable sequences~$(\kappa_n)$, makes the problem difficult enough to prevent the existence of consistent tests.

\section{Optimal testing under specified modal location}
\label{oraclesec}

Whenever the modal location~$\thetab_n$ is specified (a case that is explicitly treated in~\citealp{MarJup2000}), optimal tests of uniformity can be obtained from the following Local Asymptotic Normality (LAN) result (see Appendix~\ref{appA}  for a proof). To the best of our knowledge, this result provides the first instance of the LAN structure in high dimensions. 

\begin{Theor}
\label{LANtheor}
Let~$(p_n)$ be a sequence in~$\{2,3,\ldots\}$ and let $(\thetab_n)$ be a sequence such that $\thetab_n\in\mathcal{S}^{p_n-1}$ for all~$n$. Let $\kappa_n=\tau_n\sqrt{p_n/n}$, where the positive sequence~$(\tau_n)$ is~$O(1)$ but not~$o(1)$, and assume that $f$ is twice differentiable at~$0$. 
%
Then, as~$\ny$ under~${\rm P}_{0}\n$, 
\begin{equation}
\label{LAQmain}
\log \frac{d{\rm P}\n_{\thetab_n,\kappa_n,f}}{d{\rm P}\n_{0}} 
=
\tau_n
\Delta_{\thetab_n}\n
-
\frac{\tau_n^2}{2} 
+
o_{\rm P}(1)
,
\end{equation}
where $
\Delta_{\thetab_n}\n
:=
\sqrt{n p_n}\,
\bar{\Xb}_n\pr \thetab_n
$
is asymptotically standard normal. 
In other words,  the model $\{ {\rm P}\n_{\thetab_n,\kappa,f} : \kappa\geq 0 \}$ (where~${\rm P}\n_{\thetab_n,0,f}:={\rm P}\n_0$ for any~$\thetab_n$ and~$f$) is locally asymptotically normal at~$\kappa=0$ with central sequence~$\Delta_{\thetab_n}\n$, Fisher information~$1$, and contiguity rate~$\sqrt{p_n/n}$.
\end{Theor}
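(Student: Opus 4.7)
The plan is to establish the LAN expansion by a direct Taylor analysis of the log-likelihood ratio
\begin{equation*}
\Lambda_n := \log \frac{d{\rm P}\n_{\thetab_n,\kappa_n,f}}{d{\rm P}\n_{0}}
= n \log \frac{c_{p_n,\kappa_n,f}}{c_{p_n}} + \sum_{i=1}^n \log f(\kappa_n \Xb_{ni}'\thetab_n),
\end{equation*}
handling its two pieces separately and showing that they recombine, after an exact algebraic cancellation of the $f''(0)$-terms, into the canonical form~(\ref{LAQmain}).

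For the sum, using $f(0)=f'(0)=1$ and twice differentiability of $f$ at $0$, I would expand $\log f(u) = u + \tfrac{1}{2}(f''(0)-1)u^2 + r(u)$ with $r(u)=o(u^2)$. Plugging $u=\kappa_n \Xb_{ni}'\thetab_n$, summing over $i$, and using $\kappa_n=\tau_n\sqrt{p_n/n}$ yields
\begin{equation*}
\sum_{i=1}^n \log f(\kappa_n \Xb_{ni}'\thetab_n) = \tau_n \Delta_{\thetab_n}\n + \frac{(f''(0)-1)\tau_n^2}{2}\cdot \frac{p_n}{n}\sum_{i=1}^n (\Xb_{ni}'\thetab_n)^2 + R_n^{(1)}.
\end{equation*}
Under~${\rm P}\n_0$, $(\Xb_{n1}'\thetab_n)^2$ has mean $1/p_n$ and variance of order $1/p_n^2$ (computable explicitly from the beta distribution of $(\Xb_{n1}'\thetab_n)^2$), so a Chebyshev bound gives $\frac{p_n}{n}\sum_i (\Xb_{ni}'\thetab_n)^2 = 1+o_{\rm P}(1)$ irrespective of the growth of~$p_n$, and the quadratic term collapses to $\tfrac{1}{2}(f''(0)-1)\tau_n^2+o_{\rm P}(1)$.

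For the normalizing constant, the identity $c_{p_n,\kappa_n,f}/c_{p_n} = 1/E_0[f(\kappa_n \Xb_{n1}'\thetab_n)]$ combined with $E_0[\Xb_{n1}'\thetab_n]=0$ and $E_0[(\Xb_{n1}'\thetab_n)^2]=1/p_n$ yields, via Taylor expansion of $f$,
$$n\log(c_{p_n,\kappa_n,f}/c_{p_n}) = -\tfrac{1}{2}\tau_n^2 f''(0)+R_n^{(2)}.$$
Adding the two pieces, the $f''(0)$ contributions cancel exactly to give $\Lambda_n=\tau_n\Delta_{\thetab_n}\n-\tfrac12\tau_n^2+o_{\rm P}(1)$. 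Asymptotic normality of $\Delta_{\thetab_n}\n$ under~${\rm P}\n_0$ then follows from Lyapunov's CLT for the triangular array $Y_{ni}:=\sqrt{p_n/n}\,\Xb_{ni}'\thetab_n$: the $Y_{ni}$ are i.i.d.\ within each row, with mean zero and row-variances summing to~$1$, and the explicit fourth-moment identity $E_0[(\Xb_{n1}'\thetab_n)^4]=3/(p_n(p_n+2))$ gives $\sum_i E[Y_{ni}^4]=O(1/n)$, which is the Lyapunov condition irrespective of how~$p_n$ behaves.

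The main obstacle is controlling the Taylor remainders $R_n^{(1)}$ and $R_n^{(2)}$ \emph{uniformly} in the dimension sequence $(p_n)$. Since the bound $r(u)=o(u^2)$ is only qualitative, one must restrict the analysis to an event on which all the arguments $\kappa_n\Xb_{ni}'\thetab_n$ lie in a small fixed neighborhood of~$0$. Concentration of measure on~$\mathcal{S}^{p_n-1}$ resolves this: from $E_0[(\Xb_{ni}'\thetab_n)^2]=1/p_n$ and a maximal/union-bound argument one obtains $\max_{1\leq i\leq n}|\kappa_n\Xb_{ni}'\thetab_n|=o_{\rm P}(1)$ whenever $\kappa_n=\tau_n\sqrt{p_n/n}$ with $\tau_n=O(1)$, regardless of the relative magnitudes of $n$ and~$p_n$. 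Once this uniform control is in hand, the algebraic combination above is fully justified and~(\ref{LAQmain}) follows; the Fisher information equal to~$1$ is then read off directly.
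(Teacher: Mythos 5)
Your overall route differs from the paper's: instead of centering the summands at $E_{n1}={\rm E}[\log f(\kappa_n\Xb_{ni}\pr\thetab_n)]$, proving a Lindeberg CLT for the standardized sum, and then showing that the exact score is approximated in quadratic mean by $\Delta_{\thetab_n}\n$ (with all moment expansions funnelled through Lemma~\ref{lemcontig}), you Taylor-expand $\log f$ pathwise, so that the linear part is exactly $\tau_n\Delta_{\thetab_n}\n$, the quadratic part is handled by a Chebyshev law of large numbers, and the asymptotic normality of $\Delta_{\thetab_n}\n$ is checked via Lyapunov. That part of the plan is sound (your explicit Lyapunov verification is in fact more detailed than the paper's appeal to the CLT), and the exact cancellation of the $f''(0)$ contributions is correct.

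However, the remainder control has a genuine gap. Your fix --- restricting to the event $\max_{1\le i\le n}|\kappa_n\Xb_{ni}\pr\thetab_n|\le\delta$ --- works for the random remainder $R_n^{(1)}$ (on that event $|r(\kappa_n\Xb_{ni}\pr\thetab_n)|\le\eta\,\kappa_n^2(\Xb_{ni}\pr\thetab_n)^2$, and your law of large numbers for the squares finishes the job), but it cannot handle $R_n^{(2)}$: the term $n\log(c_{p_n,\kappa_n,f}/c_{p_n})=-n\log{\rm E}_0[f(\kappa_n\Xb_{n1}\pr\thetab_n)]$ is a deterministic integral over the whole sphere, so conditioning on a high-probability event is meaningless there. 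You must show directly that the region $\{|\Xb_{n1}\pr\thetab_n|>\delta/\kappa_n\}$ contributes $o(\kappa_n^2/p_n)=o(1/n)$ to ${\rm E}_0[f(\kappa_n\Xb_{n1}\pr\thetab_n)]$, which is delicate because when $p_n/n\to\infty$ one has $\kappa_n\to\infty$ while $f$, assumed only monotone and twice differentiable at $0$, may be very large at $\kappa_n$; this is precisely the expansion of $c_{p_n}\int_{-1}^1(1-s^2)^{(p_n-3)/2}f(\kappa_n s)\,ds$ that the paper isolates in Lemma~\ref{lemcontig}, where the concentration of the weight $(1-s^2)^{(p_n-3)/2}$ (the approximate $\delta$-sequence $h_n$) is played against the integrand. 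Without such an integral-expansion step, your claim $R_n^{(2)}=o(1)$ is unsupported. A secondary, fixable point: $\max_i|\kappa_n\Xb_{ni}\pr\thetab_n|=o_{\rm P}(1)$ does not follow from the second moment plus a union bound (that route only gives a bound of order $\tau_n^2/\varepsilon^2$, which is not $o(1)$); it does follow from the fourth-moment identity you quote, ${\rm E}_0[(\Xb_{n1}\pr\thetab_n)^4]=3/(p_n(p_n+2))$, or from the sub-Gaussian tail provided by concentration of measure on $\mathcal{S}^{p_n-1}$, so one of those should be invoked explicitly.
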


This result, that covers both the low- and high-dimensional cases, reveals that the rate~$\kappa_n\sim \sqrt{p_n/n}$ in Theorem~\ref{contigtheor} is actually the contiguity rate of the considered model (that is, more severe alternatives are not contiguous to the null of uniformity). In low dimensions, the usual parametric contiguity rate $\kappa_n\sim 1/\sqrt{n}$ is obtained. The high-dimensional rate is of course non-standard. Yet in the FvML high-dimensional case, this rate may be related to the fact that, as~$p\to\infty$, one needs to consider~$\kappa_p\sim\sqrt{p}$ to obtain FvML $p$-vectors that provide non-degenerate weak limiting results that are different from those obtained from $p$-vectors that are uniform over the sphere (see \cite{Wat1988} for a precise result); the contiguity rate~$\kappa_n\sim \sqrt{p_n/n}$ then intuitively results from a standard $1/\sqrt{n}$-shrinkage starting from this non-trivial~$\kappa_p\sim\sqrt{p}$ high-dimensional situation. 

Now, consider the \mbox{specified-$\thetab_n$} problem, that is, the problem of testing~$\{{\rm P}\n_{0}\}$ (uniformity over~$\mathcal{S}^{p_n-1}$) against~$\cup_{\kappa>0} \cup_f \{{\rm P}\n_{\thetab_n,\kappa,f}\}$. Theorem~\ref{LANtheor} entails that the test~$\phi_{\thetab_n}\n$ rejecting the null at asymptotic level~$\alpha$ whenever
\begin{equation}
\label{deforayoup}
\Delta_{\thetab_n}\n
=
\sqrt{n p_n}
\,
\bar{\Xb}_n\pr \thetab_n
>
z_\alpha
\end{equation}
is \emph{locally asymptotically most powerful}. 
Since Le Cam's third lemma readily implies that $\Delta_{\thetab_n}\n$ is asymptotically normal with mean~$\tau$ and variance one  under~${\rm P}\n_{\thetab_n,\kappa_n,f}$, with~$\kappa_n=\tau\sqrt{p_n/n}$, the corresponding asymptotic power of~$\phi_{\thetab_n}\n$ is
\begin{equation}
\label{poworacle}
\lim_{n\to\infty}
{\rm P}\n_{\thetab_n,\kappa_n,f}
\big[
\Delta_{\thetab_n}\n > z_\alpha
\big]
=
1-\Phi(z_\alpha - \tau)
.
\end{equation}
While all results of this section so far covered both the low- and high-dimensional cases, we need to treat these cases separately to investigate how the Rayleigh test compares with the optimal test~$\phi_{\thetab_n}\n$.  
  
We start with the low-dimensional case. Denoting by~$\chi^2_p(\delta)$ the non-central chi-square distribution with~$p$ degrees of freedom and non-centrality parameter~$\delta$, Le Cam's third lemma allows to show that, under the contiguous alternatives~${\rm P}\n_{\thetab_n,\kappa_n,f}$, with~$\kappa_n=\tau\sqrt{p/n}$ (compare with the local alternatives from Theorem~\ref{LANtheor}),
\begin{equation}
\label{fixedplaw}
R_n
\stackrel{\mathcal{D}}{\to}
\chi^2_p(\tau^2)
\end{equation}
as~$\ny$; for the sake of completeness, we provide a proof in the supplementary article \cite{Cut2015}. Denoting by~$\Psi_p(\cdot)$ the cumulative distribution function of the $\chi^2_p$ distribution,  the corresponding asymptotic power of the Rayleigh test is therefore
\begin{equation}
\label{powfixedp}
\lim_{n\to\infty}
{\rm P}\n_{\thetab_n,\kappa_n,f}
\big[
R_n
>
\Psi^{-1}_p(1-\alpha)
\big]
=
{\rm P}
\big[
Y>\Psi^{-1}_p(1-\alpha)
\big],
\quad
\textrm{ with } 
Y\sim \chi^2_p(\tau^2)
,
\end{equation}
which is strictly smaller than the asymptotic power in~(\ref{poworacle}). We conclude that, in the \mbox{specified-$\thetab_n$} case, the low-dimensional Rayleigh test is not locally asymptotically most powerful  yet shows non-trivial asymptotic powers against contiguous alternatives.

The story is different in the high-dimensional case, as it can be guessed from the following heuristic reasoning. In view of~(\ref{fixedplaw}), we have that, as~$\ny$ under ${\rm P}\n_{\thetab_n,\kappa_n,f}$, with~$\kappa_n=\tau\sqrt{p/n}$, 
$$
R_n^{\rm St}
=
\frac{R_n-p}{\sqrt{2p}}
\stackrel{\mathcal{D}}{\to}
\frac{\chi^2_1(\tau^2)-1}{\sqrt{2p}} + 
\frac{\chi^2_{p-1}-(p-1)}{\sqrt{2p}}
,
$$
where both chi-square variables are independent. When both~$n$ and~$p$ are large, it is therefore expected that, under the same sequence of alternatives, 
 $
R_n^{\rm St}
\approx
\mathcal{N}\big(\frac{\tau^2}{\sqrt{2p}}
\,
,
 1 + \frac{2\tau^2}{p}\big)
,
$
where~$Z_n\approx \mathcal{L}$ means that the distribution of~$Z_n$ is close to~$\mathcal{L}$. Thus, in the high-dimensional case (where~$p=p_n\to\infty$), $R_n^{\rm St}$ is expected to be standard normal under these alternatives, which would imply that the high-dimensional Rayleigh test in~(\ref{RayHDtraindelavie}) has asymptotic powers equal to the nominal level~$\alpha$. 

The high-dimensional LAN result in Theorem~\ref{LANtheor} allows to confirm these heuristics. Letting~$\kappa_n=\tau_n\sqrt{p_n/n}$, where~$\tau_n$ is $O(1)$, Theorem~\ref{LANtheor} readily yields that, as~$\ny$,
\begin{eqnarray*}
\lefteqn{
{\rm Cov}_{{\rm P}_{0}\n}\!
\bigg[
{R}_n^{\rm St}
,
\log \frac{d{\rm P}\n_{\thetab_n,\kappa_n,f}}{d{\rm P}\n_{0}} 
\bigg]
=
{\rm Cov}_{{\rm P}_{0}\n}\!
\big[{R}_n^{\rm St},\Delta_{\thetab_n,f}\n\big]
\tau_n
+o(1)
}
\\[2mm]
& & 
\hspace{10mm}
=
\frac{\sqrt{2}p_n}{n^{3/2}} \tau_n
\sum_{i=1}^n 
\sum_{1\leq k<\ell\leq n}
{\rm E}_{{\rm P}_{0}\n}\!
[(\Xb_{ni}\pr \thetab_n) (\Xb_{nk}\pr \Xb_{n\ell})]
+
o(1)
=
o(1)
,
\end{eqnarray*}
so that Le Cam's third lemma implies that $R_n^{\rm St}$ remains asymptotically standard normal under~${\rm P}\n_{\thetab_n,\kappa_n,f}$. 
This confirms that, unlike in the low-dimensional case, the high-dimensional Rayleigh test does not show any power under the contiguous alternatives from Theorem~\ref{LANtheor}. In other words, the high-dimensional Rayleigh test fails to be rate-consistent for the \mbox{specified-$\thetab_n$} problem. 

The Rayleigh test, however, does not make use of the specified value of the modal location~$\thetab_n$, hence does not primarily address the \mbox{specified-$\thetab_n$} problem but rather the \mbox{unspecified-$\thetab_n$} one. Therefore, the key question is whether or not the Rayleigh test is optimal for the latter problem. We answer this question in the next section.

\section{Optimal testing under unspecified modal location}
\label{invariantsec}

Building on the results of the previous section, two natural approaches, that may lead to an optimal test for the \mbox{\mbox{un\mbox{specified-$\thetab_n$}}} problem, are the following. The first one consists in substituting an estimator~$\hat\thetab_n$ for~$\thetab_n$ in the optimal test~$\phi_{\thetab_n}\n$ above. For the so-called \emph{spherical mean}~$\hat\thetab_n=\bar{\Xb}_n/\|\bar{\Xb}_n\|$ (which is the MLE for~$\thetab_n$ in the FvML case), the resulting test rejects the null for large values of
$
\Delta_{\hat\thetab_n}\n
=
\sqrt{n p_n}
\,
\bar{\Xb}_n\pr \hat\thetab_n
=
\sqrt{n p_n}
\,
\|\bar{\Xb}_n\|
=R_n^{1/2}
,
$
hence coincides with the Rayleigh test. The second approach, in the spirit of~\cite{Dav1977,Dav1987,Dav2002}, rather consists in adopting the test statistic
$
\sup_{\thetab_n\in\mathcal{S}^{p_n-1}}
\Delta_{\thetab_n}\n
=
\sqrt{n p_n}
\,
\|\bar{\Xb}_n\|
,
$
which again leads to the Rayleigh test. These considerations suggest that the Rayleigh test indeed may be optimal for the \mbox{un\mbox{specified-$\thetab_n$}} problem. In this section, we investigate whether this is the case or not, both in low and high dimensions.

\subsection{The low-dimensional case}
\label{invariantlowsec}

To investigate the optimality properties of the low-dimensional Rayleigh test for the \mbox{un\mbox{specified-$\thetab_n$}} problem, it is helpful to adopt a new parametri\-zation. For fixed~$p$ and~$f$, the model is indexed by $(\thetab,\kappa)\in\mathcal{S}^{p-1}\times\R^+$, where the value~$\kappa=0$ makes~$\thetab$ unidentified (for fixed~$p$, the dimension of~$\thetab$ does not depend on~$n$, so that there is no need to consider sequences~$(\thetab_n)$). We then consider the alternative parametrization in~${\pmb \mu}:=\kappa\thetab$, for which the fixed-$p$ result in Theorem~\ref{LANtheor} readily rewrites as follows.

\begin{Theor}
\label{LANtheorlow}
Fix an integer~$p\geq 2$ and let ${\pmb\mu}_n=\sqrt{p/n}\,\taub_n$ for all~$n$, where the sequence~$(\taub_n)$ in~$\R^p$ is~$O(1)$ but not~$o(1)$. Assume that $f$ is twice differentiable at~$0$. For any~${\pmb \mu}\in\R^p\setminus\{\mathbf{0}\}$, let ${\rm P}\n_{{\pmb\mu},f}:={\rm P}\n_{\thetab,\kappa,f}$, where~${\pmb \mu}=:\kappa\thetab$, with~$\thetab\in\mathcal{S}^{p-1}$.
Then, as~$\ny$ under~${\rm P}_{0}\n$, 
$
\log \big(d{\rm P}\n_{{\pmb\mu}_n,f}/d{\rm P}\n_{0}\big)
=
\taub_n\pr
\Deltab\n
-
\frac{1}{2} \|\taub_n\|^2
+
o_{\rm P}(1)
$, where $\Deltab\n:=\sqrt{n p}\,\bar{\Xb}_n$
is asymptotically standard $p$-variate normal. 
\end{Theor}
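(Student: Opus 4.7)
The plan is to derive Theorem~\ref{LANtheorlow} as a direct reparametrization of the fixed-$p$ instance of Theorem~\ref{LANtheor}. Given ${\pmb\mu}_n=\sqrt{p/n}\,\taub_n$ with $\taub_n\neq\mathbf{0}$, I would set
$$
\kappa_n:=\|{\pmb\mu}_n\|=\sqrt{p/n}\,\|\taub_n\|,
\qquad
\thetab_n:=\mbox{${\pmb\mu}_n/\|{\pmb\mu}_n\|$}\in\mathcal{S}^{p-1},
\qquad
\tau_n:=\|\taub_n\|,
$$
so that $\kappa_n=\tau_n\sqrt{p/n}$ matches the parametrization of Theorem~\ref{LANtheor}. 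The vectorial hypothesis ``$\taub_n$ is $O(1)$ but not $o(1)$'' translates, via $\|\taub_n\|\to 0 \Leftrightarrow \taub_n\to\mathbf{0}$, into the scalar hypothesis ``$\tau_n$ is $O(1)$ but not $o(1)$'' that Theorem~\ref{LANtheor} requires.

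Applying Theorem~\ref{LANtheor} with $p_n=p$ and this choice of $(\thetab_n,\kappa_n)$ would then yield, under ${\rm P}\n_0$, the expansion $\log(d{\rm P}\n_{{\pmb\mu}_n,f}/d{\rm P}\n_0)=\tau_n\Delta_{\thetab_n}\n-\tfrac{1}{2}\tau_n^2+o_{\rm P}(1)$. Next I would rewrite the linear term as
$$
\tau_n\Delta_{\thetab_n}\n
=
\|\taub_n\|\,\sqrt{np}\,\bar{\Xb}_n\pr(\taub_n/\|\taub_n\|)
=
\sqrt{np}\,\bar{\Xb}_n\pr\taub_n
=
\taub_n\pr\Deltab\n,
$$
while $\tau_n^2=\|\taub_n\|^2$, producing exactly the LAN decomposition claimed in Theorem~\ref{LANtheorlow}.

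The asymptotic $\mathcal{N}(\mathbf{0},\mathbf{I}_p)$ law of $\Deltab\n=\sqrt{np}\,\bar{\Xb}_n$ under ${\rm P}\n_0$ then follows from the standard multivariate central limit theorem, once one observes that under the uniform distribution on $\mathcal{S}^{p-1}$ one has ${\rm E}[\Xb_{n1}]=\mathbf{0}$ by symmetry and ${\rm Cov}[\Xb_{n1}]=(1/p)\mathbf{I}_p$ (rotational invariance forces the covariance to be a scalar multiple of the identity, and its trace equals ${\rm E}\|\Xb_{n1}\|^2=1$).

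The one genuinely delicate point is that $\|\taub_n\|$, though $O(1)$ and not $o(1)$, need not converge; it could oscillate in $(0,\infty)$. I would handle this by a routine subsequence argument: from any subsequence, extract a further subsequence along which $\|\taub_n\|$ converges either to a positive limit (so Theorem~\ref{LANtheor} applies directly and produces the stated expansion) or to $0$ (in which case contiguity from Theorem~\ref{contigtheor} forces the log-likelihood ratio to be $o_{\rm P}(1)$, matching a right-hand side that is itself $o_{\rm P}(1)$). Since the statement is essentially a change of variable in an already established LAN expansion, no substantive obstacle beyond this bookkeeping arises.
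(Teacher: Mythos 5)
Your proof is correct and is essentially the paper's own argument: the paper gives no separate proof of Theorem~\ref{LANtheorlow}, stating only that the fixed-$p$ case of Theorem~\ref{LANtheor} ``readily rewrites'' in the ${\pmb\mu}=\kappa\thetab$ parametrization, which is precisely your reparametrization $\tau_n=\|\taub_n\|$, $\thetab_n={\pmb\mu}_n/\|{\pmb\mu}_n\|$, the identity $\tau_n\Delta_{\thetab_n}\n=\taub_n\pr\Deltab\n$, and the multivariate CLT for $\sqrt{np}\,\bar{\Xb}_n$ with covariance $(1/p)\mathbf{I}_p$. Your closing subsequence discussion is unnecessary (Theorem~\ref{LANtheor} is stated for arbitrary positive $O(1)$-but-not-$o(1)$ sequences $(\tau_n)$, convergent or not), and its justification is slightly off --- contiguity as stated in Theorem~\ref{contigtheor} does not by itself force the log-likelihood ratio to be $o_{\rm P}(1)$ --- but since that step is not needed, the proof stands.
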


In the new parametrization, note that the problem of testing uniformity consists in testing~$\mathcal{H}_0\n: {\pmb\mu}={\bf 0}$ versus~$\mathcal{H}_1\n: {\pmb\mu}\neq {\bf 0}$. Theorem~\ref{LANtheorlow} then ensures that the test rejecting the null at asymptotic level~$\alpha$ whenever
$
\|\Deltab\n\|^2=np\| \bar{\Xb}_n \|^2 >\Psi_p^{-1}(1-\alpha)
$ ---
that is, the low-dimensional Rayleigh test --- is locally asymptotically maximin; see, e.g.,  \cite{Lie2008}. This new optimality property of the low-dimensional Rayleigh test complements the one stating that this test is locally most powerful invariant;
see, e.g., \cite{Chi2003}, Section~6.3.5. 

The specified-$\thetab_n$ and unspecified-$\thetab_n$ testing problems are two distinct statistical problems, that, even in the low-dimensional case considered, provide different efficiency bounds. In low dimensions, the Rayleigh test is optimal for the unspecified-$\thetab_n$ problem, but not for the specified-$\thetab_n$ one (the latter suboptimality follows from the fact that the asymptotic powers in~(\ref{powfixedp}) are strictly smaller than those of the optimal test in~(\ref{poworacle})). This thoroughly describes the optimality properties of this test in the low-dimensional case, so that we may now focus on the high-dimensional case.

\subsection{The high-dimensional case}
\label{invarianthighsec}

If~$p_n$ goes to infinity, then the dimension of the parameter~$(\thetab_n,\kappa)$ increases with~$n$, so that there cannot be a high-dimensional analogue of the LAN result in Theorem~\ref{LANtheorlow}. We therefore rather adopt, in the present hypothesis testing context, an \emph{invariance} approach that is close in spirit to the one used by \cite{Mor2009} in a point estimation context. 

The null of uniformity and all collections of alternatives~$\mathcal{P}\n_{\kappa,f}:=\{{\rm P}\n_{\thetab,\kappa,f}:\thetab\in\mathcal{S}^{p_n-1}\}$ (hence also the problem of testing uniformity against rotationally symmetric alternatives itself) are invariant under the group of rotations~$\mathcal{G}\n:=\{g\n_\Ob:\Ob\in SO(p_n)\}$, where~$g\n_\Ob(\xb_1,\ldots,\xb_n)=(\Ob\xb_1,\ldots,\Ob\xb_n)$ for any~$(\xb_1,\ldots,\xb_n)\in \mathcal{S}^{p_n-1}\times \ldots \times \mathcal{S}^{p_n-1}$ \mbox{($n$ times)} and where~$SO(p_n)$ stands for the collection of $p_n\times p_n$ orthogonal matrices with determinant one. 
The invariance principle (see, e.g., \cite{Shao2003}, Section~6.3, or \cite{Lehetal2005}, Chapter~6) then suggests restricting to $\mathcal{G}\n$-invariant tests, that automatically are distribution-free under any~$\mathcal{P}\n_{\kappa,f}$.  

As usual, optimal invariant tests are to be determined in the image of the original model by a maximal invariant~$\Tb_n$ of~$\mathcal{G}\n$. The likelihood (with respect to the surface area measure~$m_{p_n}$ on~$\mathcal{S}^{p_n-1}$) associated with the image of~$\mathcal{P}\n_{\kappa_n,f}$ by~$\Tb_n$ is given by
$$
 \frac{d{\rm P}^{(n)\Tb_n}_{\kappa_n,f}}{dm_{p_n}} 
=
\int_{SO(p_n)}
\prod_{i=1}^n 
\Big[
c_{p_n,\kappa_n,f} 
f(\kappa_n (\Ob\Xb_{ni})\pr \thetab_n)
\Big]
\,d\Ob
,
$$
where the integral is with respect to the Haar measure on~$SO(p_n)$; see, e.g., Lemma~2.5.1 in~\cite{Gir1996}. The resulting log-likelihood ratio to the null of uniformity is therefore
\begin{eqnarray}
\Lambda_{n,f}^{\Tb_n}
\, := \,
\log \frac{d{\rm P}^{(n)\Tb_n}_{\kappa_n,f}}{d{\rm P}\n_{0}} 
&=&
\log 
\,
 \frac{c_{p_n,\kappa_n,f}^n  
\int_{SO(p_n)}
\prod_{i=1}^n 
f(\kappa_n \Xb_{ni} (\Ob\pr \thetab_n))\,d\Ob
}{c_{p_n}^n} 
\nonumber
\\[2mm]
&=&
\log 
\,
 \frac{c_{p_n,\kappa_n,f}^n 
{\rm E}
\big[
\prod_{i=1}^n 
f(\kappa_n \Xb_{ni}\pr \Ub) | \Xb_{n1},\ldots,\Xb_{nn} \big]
}{c_{p_n}^n} 
,
\label{invarL}
\end{eqnarray}
where~$\Ub$ is uniformly distributed over~$\mathcal{S}^{p_n-1}$ and is independent of the~$\Xb_{ni}$'s. The following theorem shows that, in the FvML case~$f(\cdot)=f_{\rm FvML}(\cdot)=\exp(\cdot)$, this collection of log-likelihood ratios enjoys the LAN property.

\begin{Theor}
\label{LANinvartheor}
Let $(p_n)$ be a sequence of positive integers diverging to~$\infty$ as $\ny$ and let~$\kappa_n=\tau_n p_n^{3/4}/\sqrt{n}$, where the positive sequence~$(\tau_n)$ is~$O(1)$ but not~$o(1)$. 
%
Then, as~$\ny$ under~${\rm P}_{0}\n$, we have that 
\begin{equation}
\label{LAQinvarmain}
\log \frac{d{\rm P}^{(n)\Tb_n}_{\kappa_n,f_{\rm FvML}}}{d{\rm P}\n_{0}} 
=
\tau_n^2
\Delta^{(n)\Tb_n}
-
\frac{\tau_n^4}{4}
+
o_{\rm P}(1)
,
\end{equation}
where
$
\Delta^{(n)\Tb_n}
:=
{R}_n^{\rm St}/\sqrt{2}
$ is asymptotically normal with mean zero and variance~$1/2$
\linebreak (${R}_n^{\rm St}$ is the standardized Rayleigh test statistic in~(\ref{raylhdstat})). 
%
\end{Theor}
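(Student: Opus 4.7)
The plan is to exploit the FvML exponential structure to reduce the invariant log-likelihood ratio~(\ref{invarL}) to a scalar function of $\|\bar{\Xb}_n\|$, and then to expand that function via an asymptotic of the spherical moment generating function.

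Since $f_{\rm FvML}(\cdot)=\exp(\cdot)$, the product $\prod_{i=1}^n \exp(\kappa_n \Xb_{ni}\pr \Ub)$ collapses to $\exp(\kappa_n (n\bar{\Xb}_n)\pr \Ub)$, and by rotational invariance of~$\Ub$ the conditional expectation in~(\ref{invarL}) equals $\phi_{p_n}(n\kappa_n\|\bar{\Xb}_n\|)$, where
$$\phi_p(r):={\rm E}[\exp(r \Ub_1)]$$
is the moment generating function of the first coordinate $\Ub_1$ of a random vector uniformly distributed on $\mathcal{S}^{p-1}$. Combined with the identity $c_{p_n,\kappa_n,f_{\rm FvML}}^n/c_{p_n}^n=1/\phi_{p_n}(\kappa_n)^n$ coming from the FvML normalization, this produces the explicit scalar representation
$$
\log \frac{d{\rm P}^{(n)\Tb_n}_{\kappa_n,f_{\rm FvML}}}{d{\rm P}\n_{0}}
=
\log\phi_{p_n}\big(n\kappa_n\|\bar{\Xb}_n\|\big) \, - \, n\log\phi_{p_n}(\kappa_n).
$$

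The next step is to obtain a precise expansion of $\log\phi_p$. The second and fourth cumulants of $\Ub_1$ being $1/p$ and $-6/(p^2(p+2))$, the cumulant expansion gives
\begin{equation}
\label{phiexpprop}
\log\phi_p(r)=\frac{r^2}{2p}-\frac{r^4}{4p^3}+O\!\Big(\frac{r^2}{p^3}+\frac{r^4}{p^4}+\frac{r^6}{p^5}\Big),
\end{equation}
with the remainder valid uniformly for $r/p$ below a small constant. Plugging in $r=\kappa_n=\tau_np_n^{3/4}/\sqrt{n}$ and $r=n\kappa_n\|\bar{\Xb}_n\|$, and using the elementary identity
$$
n\|\bar{\Xb}_n\|^2=\frac{R_n}{p_n}=1+\sqrt{2/p_n}\,R_n^{\rm St}
$$
(immediate from the definition of $R_n^{\rm St}$), the leading quadratic contribution to the log-likelihood ratio is
$$
\frac{(n\kappa_n\|\bar{\Xb}_n\|)^2 - n\kappa_n^2}{2p_n}
=
\frac{n\kappa_n^2}{2p_n}\big(n\|\bar{\Xb}_n\|^2-1\big)
=
\frac{\tau_n^2\sqrt{p_n}}{2}\cdot \sqrt{\frac{2}{p_n}}\,R_n^{\rm St}
=
\tau_n^2\,\Delta^{(n)\Tb_n},
$$
while the quartic contribution equals $-\tau_n^4/4+o(1)$ (the piece at $r=\kappa_n$ is $O(n^{-1})$ even after multiplication by~$n$). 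This delivers the claimed LAN expansion with $\Delta^{(n)\Tb_n}=R_n^{\rm St}/\sqrt{2}$; its asymptotic $\mathcal{N}(0,1/2)$ law follows directly from Theorem~\ref{raylnull}.

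The main obstacle is establishing~(\ref{phiexpprop}) for $r$-values as large as $n\kappa_n\|\bar{\Xb}_n\|\sim\tau_n p_n^{3/4}$. In that range $r^2/p_n\to\infty$, so the formal cumulant series is not termwise small and naive Taylor bounds fail. I would handle this either by invoking Olver's uniform asymptotic expansion of $\mathcal{I}_\nu(\nu x)$ in the regime $\nu\to\infty$, $x=r/\nu\to 0$ (applicable here since $r/p_n\sim p_n^{-1/4}\to 0$), or equivalently by Laplace's method applied to the integral representation $\phi_p(r)=c_p\int_{-1}^1 e^{rt}(1-t^2)^{(p-3)/2}\,dt$ around the saddle $t^\star\sim r/p$. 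Both routes yield~(\ref{phiexpprop}) with sufficient uniformity. Combined with the stochastic bound $n\|\bar{\Xb}_n\|^2=1+O_{\rm P}(p_n^{-1/2})$, a direct consequence of Theorem~\ref{raylnull}, every remainder term is $o_{\rm P}(1)$ when evaluated at $r=n\kappa_n\|\bar{\Xb}_n\|$ (and likewise $n\cdot$(remainder at $\kappa_n$)${}=o(1)$), finishing the proof.
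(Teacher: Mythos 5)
Your reduction is exactly the one the paper uses: in the FvML case the invariant log-likelihood ratio~(\ref{invarL}) collapses to $\log H_{p_n/2-1}(n\kappa_n\|\bar{\Xb}_n\|)-n\log H_{p_n/2-1}(\kappa_n)$, where your spherical moment generating function $\phi_p$ is precisely the Bessel ratio $H_{p/2-1}(r)=\Gamma(p/2)\mathcal{I}_{p/2-1}(r)/(r/2)^{p/2-1}$, and your subsequent bookkeeping (the identity $n\|\bar{\Xb}_n\|^2=1+\sqrt{2/p_n}\,R_n^{\rm St}$, the quadratic term giving $\tau_n^2\Delta^{(n)\Tb_n}$, the quartic term giving $-\tau_n^4/4+o_{\rm P}(1)$ via $T_n:=\sqrt{n}\|\bar{\Xb}_n\|=1+o_{\rm P}(1)$ from Theorem~\ref{raylnull}) matches the paper's computation, including the correct cumulants $1/p$ and $-6/(p^2(p+2))$, so the target expansion~(\ref{phiexpprop}) is the right one. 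The only genuine difference is how the crucial uniform fourth-order expansion of $\log H_\nu$ is justified in the regime where the argument is of order $p_n^{3/4}$ (so that $r^2/p_n\to\infty$): you propose Olver's uniform asymptotics for $\mathcal{I}_\nu(\nu z)$ with $z\to 0$, or a Laplace/saddle-point analysis of the integral representation, whereas the paper sandwiches $\log H_\nu(\kappa)$ between the explicit Amos-type bounds $S_{\nu+1/2,\nu+3/2}(\kappa)\leq\log H_\nu(\kappa)\leq S_{\nu,\nu+2}(\kappa)$ of \citealp{HotGru2014} and Taylor-expands the elementary function $S_{\alpha,\beta}$, which yields the needed two-term expansion (with quartic coefficient $-1/(4p^2(p+2))$, asymptotically your $-1/(4p^3)$) by purely elementary means and with uniformity built in. Both routes work; the paper's is more self-contained and avoids having to verify the uniformity of a classical asymptotic expansion in the joint limit $\nu\to\infty$, $z\to 0$, while yours would also go through but leaves that verification — which is the entire technical content of this proof — as an appeal to standard results rather than carrying it out.
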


Applying Le Cam's third lemma, 
\vspace{-.8mm}
we obtain that, as~$\ny$ under~${\rm P}^{(n)\Tb_n}_{\kappa_n,f_{\rm FvML}}$, with~$\kappa_n=\tau p_n^{3/4}/\sqrt{n}$, $\Delta^{(n)\Tb_n}$ converges weakly to the normal distribution with mean~$\Gamma\tau^2$ and variance~$\Gamma$, with~$\Gamma=1/2$. The model~$\{{\rm P}^{(n)\Tb_n}_{\kappa,f_{\rm FvML}}:\kappa\geq 0\}$  (where~${\rm P}^{(n)\Tb_n}_{0,f_{\rm FvML}}:={\rm P}\n_0$) is thus ``second-order" LAN, in the sense that the mean of the limiting Gaussian shift experiment is quadratic (rather than linear) in~$\tau$. Clearly, this does not change the form of locally asymptotically optimal tests, but only their asymptotic performances. Note that the contiguity rate~$\kappa_n \sim p_n^{3/4}/\sqrt{n}$ associated with this new LAN property differs from the contiguity rate~$\kappa_n \sim \sqrt{p_n/n}$ in Theorem~\ref{LANtheor}. 

Theorem~\ref{LANinvartheor} entails that the test rejecting the null of uniformity  at asymptotic level~$\alpha$ whenever
$\Delta^{(n)\Tb_n}/\sqrt{\Gamma}=R_n^{\rm St}>z_\alpha$ (that is, the high-dimensional Rayleigh test in~(\ref{RayHDtraindelavie})) is,  in the FvML case,  \emph{locally asymptotically most powerful invariant}, that is, locally asymptotically most powerful in the class of invariant tests. This optimality result is of a high-dimensional asymptotic nature and also covers cases where~$\kappa_n$ does not converge to~$0$, hence   does not follow from the aforementioned local optimality result from \cite{Chi2003}. Le Cam's third lemma readily implies that ${R}_n^{\rm St}$ converges weakly to the normal distribution with mean~$\tau^2/\sqrt{2}$ and variance one as~$\ny$ under~${\rm P}^{(n)\Tb_n}_{\kappa_n,f_{\rm FvML}}$, with~$\kappa_n=\tau p_n^{3/4}/\sqrt{n}$, so that the corresponding asymptotic power of the Rayleigh test is given by
\begin{equation}
\label{powRay}
\lim_{n\to\infty}
{\rm P}\n_{\thetab_n,\kappa_n,f_{\rm FvML}}
\big[
R_n^{\rm St}>z_\alpha
\big]
=
1
- 
\Phi
\big( 
z_\alpha- {\textstyle\frac{\tau^2}{\sqrt{2}}}
\big)
,
\end{equation}
where the sequence~$(\thetab_n)$ is such that $\thetab_n\in\mathcal{S}^{p_n-1}$ for all~$n$ but is otherwise arbitrary. While the Rayleigh test is blind to alternatives in~$\kappa_n\sim  \sqrt{p_n/n}$, it thus detects alternatives in~$\kappa_n\sim p_n^{3/4}/\sqrt{n}$, which, in view of Theorem~\ref{LANinvartheor}, is the best that can be achieved for the \mbox{un\mbox{specified-$\thetab_n$}} problem. 

Interestingly, we might have guessed that these alternatives in~$\kappa_n\sim p_n^{3/4}/\sqrt{n}$ are those that can be detected by the high-dimensional Rayleigh test. Recall indeed that heuristic arguments in Section~\ref{oraclesec} suggested that, under ${\rm P}\n_{\thetab_n,\kappa_n,f}$, with~$\kappa_n=\tau\sqrt{p/n}$, the distribution of~$R_n^{\rm St}$ is close to $\mathcal{N}\big(\frac{\tau^2}{\sqrt{2p}}
\,
,
 1 + \frac{2\tau^2}{p}\big)
$ for large~$n$ and~$p$. Consequently, to obtain, in high dimensions, an asymptotic non-null distribution that differs from the limiting null (standard normal) one, we need to consider alternatives of the form~${\rm P}\n_{\thetab_n,\kappa_n,f}$, with~$\kappa_n=\tau p_n^{3/4}/\sqrt{n}$, under which the distribution of~$R_n^{\rm St}$ is then expected to be approximately $\mathcal{N}\big(\frac{\tau^2}{\sqrt{2}},1\big)$ for  large~$n$ and~$p$. This is fully in line with the non-null distribution and local asymptotic powers obtained from Le Cam's third lemma in the previous paragraph. 

Provided that~$f$ is four times differentiable at~$0$ and that~$p_n=o(n^2)$, tedious computations allowed to show that a 
\vspace{-.8mm}
fourth-order expansion of the $f$-based log-likelihood ratio~$\Lambda_{n,f}^{\Tb_n}$ above, still based on~$\kappa_n=\tau_n p_n^{3/4}/\sqrt{n}$, exactly provides the righthand side of~(\ref{LAQinvarmain}), with the same central sequence~$
\Delta^{(n)\Tb_n}$. However, turning this into a proper $f$-based version of Theorem~\ref{LANinvartheor} requires controlling the corresponding (fifth-order) remainder term, which proved to be extremely difficult. Yet we conjecture that Theorem~\ref{LANinvartheor} indeed extends to an arbitrary~$f$ admitting five derivatives at~$0$, under the aforementioned assumption that~$p_n=o(n^2)$ (an assumption that is superfluous in the FvML case, since Theorem~\ref{LANinvartheor} allows~$p_n$ to go to infinity in an arbitrary way as a function of~$n$). Proving this conjecture would establish that the Rayleigh test is locally asymptotically most powerful invariant under any such~$f$, with the same asymptotic powers as in~(\ref{powRay}). Since this remains a conjecture, we now study the asymptotic powers of the high-dimensional Rayleigh test away from the FvML case.


\section{Asymptotic non-null behaviour of the Rayleigh test}
\label{Rayleighsec}

In this section, we derive the asymptotic distribution of the high-dimensional Rayleigh test under rotationally symmetric  distributions that encompass those considered in Sections~\ref{contiguitysec}-\ref{invariantsec}. Here we do not require that the rotationally symmetric alternatives are monotone (in the sense of Section~\ref{contiguitysec}), nor absolutely continuous with respect to the surface area measure on the unit sphere, nor that they involve a concentration parameter~$\kappa$. Yet one of our objectives is to interpret the results of this section in the light of the contiguity/LAN/rate-consistency/power results obtained above. 


More specifically, the sequences of alternatives we consider in this section are described by triangular arrays of observations $\Xb_{ni}$, $i=1,\ldots,n$, $n=1,2,\ldots$ such that, for any~$n$, $\Xb_{n1},\Xb_{n2},\ldots,\Xb_{nn}$ are mutually independent and share a common rotationally symmetric distribution on~$\mathcal{S}^{p_n-1}$. We denote by~${\rm P}^{(n)}_{\thetab_n,F_n}$ the corresponding hypothesis when~$\Xb_{ni}$ is rotationally symmetric about~$\thetab_n$ and~$\Xb_{ni}\pr\thetab_n$ has cumulative distribution function~$F_n$. Since the Rayleigh test statistic is invariant under rotations, we will, without loss of generality, restrict to the case for which~$\thetab_n$, for any~$n$, coincides with the first vector of the canonical basis of~$\R^{p_n}$. The corresponding sequence of hypotheses will then simply be denoted as~${\rm P}^{(n)}_{F_n}$. 

Under the null of uniformity (which we still denote as~${\rm P}_0\n$), the test statistic~${R}_n^{\rm St}$ in~(\ref{raylhdstat}) has mean zero and variance~$\frac{n-1}{n}$($\to 1$). Rotationally symmetric alternatives are expected to have an impact on the asymptotic mean and variance of~${R}_n^{\rm St}$. This is made precise in the following result (see Appendix~\ref{appB1} for a proof).

\begin{Prop}
\label{propmoments}
Under~${\rm P}^{(n)}_{F_n}$, 
$
{\rm E}[{R}_n^{\rm St}]
=
(n-1)\sqrt{p_n} \, e_{n1}^2/\sqrt{2}
$
and
$\sigma_n^2
:=
p_n \tilde{e}_{n2}^2
+
2n p_n e_{n1}^2 \tilde{e}_{n2}
+
f_{n2}^2
={\rm Var}[{R}_n^{\rm St}]+o(1)$ as~$n\to\infty$, where the expectations~$e_{n\ell}:={\rm E}[(\Xb_{ni}'\thetab_n)^\ell]$, $\tilde{e}_{n\ell}:={\rm E}[(\Xb_{ni}'\thetab_n-e_{n1})^\ell]$, and $f_{n\ell}:={\rm E}[ (1- (\Xb_{ni}'\thetab_n)^2)^{\ell/2}]$ are taken under~${\rm P}^{(n)}_{F_n}$.
\end{Prop}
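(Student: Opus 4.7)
The plan is to compute both moments of
$R_n^{\mathrm{St}} = \frac{\sqrt{2p_n}}{n}\sum_{1\le i<j\le n}\Xb_{ni}'\Xb_{nj}$
by direct expansion, exploiting two structural consequences of rotational symmetry of~$\Xb_{ni}$ about~$\thetab_n$: first, that $\mathrm{E}[\Xb_{ni}]$ is forced to be collinear with~$\thetab_n$ (hence equals~$e_{n1}\thetab_n$); second, that the matrix $\Mb_n := \mathrm{E}[\Xb_{n1}\Xb_{n1}']$ commutes with every orthogonal map fixing~$\thetab_n$. Combining this commutation constraint with $\trace(\Mb_n)=1$ (since $\|\Xb_{n1}\|=1$) and $\thetab_n'\Mb_n\thetab_n = e_{n2}$ pins down
\[
\Mb_n = e_{n2}\thetab_n\thetab_n' + \frac{1-e_{n2}}{p_n-1}\bigl(\mathbf{I}_{p_n}-\thetab_n\thetab_n'\bigr).
\]

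The mean follows immediately: independence gives $\mathrm{E}[\Xb_{ni}'\Xb_{nj}] = e_{n1}^2$ for $i\neq j$, and summing over the $\binom{n}{2}$ pairs yields exactly $(n-1)\sqrt{p_n}\,e_{n1}^2/\sqrt{2}$.

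For the variance, I would write $\mathrm{Var}[R_n^{\mathrm{St}}]=(2p_n/n^2)\mathrm{Var}[U_n]$ with $U_n:=\sum_{i<j}\Xb_{ni}'\Xb_{nj}$, and decompose $\mathrm{Var}[U_n]$ into diagonal contributions and cross-covariances. Independence kills $\mathrm{Cov}[\Xb_{ni}'\Xb_{nj},\Xb_{nk}'\Xb_{nl}]$ whenever $\{i,j\}\cap\{k,l\}=\emptyset$, leaving only the $\binom{n}{2}$ diagonal terms together with $n(n-1)(n-2)$ ordered pairs sharing exactly one index. The block form of~$\Mb_n$ yields $\mathrm{E}[(\Xb_{n1}'\Xb_{n2})^2]=\trace(\Mb_n^2)=e_{n2}^2+(1-e_{n2})^2/(p_n-1)$, hence $\mathrm{Var}[\Xb_{n1}'\Xb_{n2}]=e_{n2}^2+(1-e_{n2})^2/(p_n-1)-e_{n1}^4$; similarly $\mathrm{Cov}[\Xb_{n1}'\Xb_{n2},\Xb_{n1}'\Xb_{n3}]=\mathrm{E}[\Xb_{n2}]'\Mb_n\mathrm{E}[\Xb_{n3}]-e_{n1}^4=e_{n1}^2\tilde e_{n2}$.

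Assembling these pieces and using the identity $e_{n2}^2-e_{n1}^4=\tilde e_{n2}^2+2e_{n1}^2\tilde e_{n2}$, I would obtain a closed-form expression for $\mathrm{Var}[R_n^{\mathrm{St}}]$ as a sum of three terms in $\tilde e_{n2}^2$, $e_{n1}^2\tilde e_{n2}$, and $(1-e_{n2})^2$, each multiplied by a rational function of~$n$ and~$p_n$ (namely $p_n(n-1)/n$, $2p_n(n-1)^2/n$, and $p_n(n-1)/[n(p_n-1)]$, respectively). Comparing coefficient by coefficient with the $p_n$, $2np_n$, and~$1$ appearing in~$\sigma_n^2$ then makes the $o(1)$ statement explicit: the third prefactor converges to~$1$ unconditionally as $p_n\to\infty$, while the first two differ from their targets by~$-p_n/n$ and $2p_n/n - 4p_n$, producing residual terms controlled by the boundedness of $e_{n1}$, $\tilde e_{n2}$, and $1-e_{n2}$ under the contiguous-alternative regime that is implicit from Sections~\ref{contiguitysec}--\ref{invariantsec}. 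The main obstacle is precisely this last bookkeeping step; the moment computation itself is routine once the block form of~$\Mb_n$ is in hand.
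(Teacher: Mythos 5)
Your computation is correct and, at bottom, runs parallel to the paper's: the same split of ${\rm Var}[R_n^{\rm St}]$ into the $\binom{n}{2}$ diagonal terms, the $n(n-1)(n-2)$ pairs of index pairs sharing exactly one index, and the non-contributing disjoint pairs, leading to the same exact expression ${\rm Var}[R_n^{\rm St}]=\frac{p_n(n-1)}{n}\tilde{e}_{n2}^2+\frac{2p_n(n-1)^2}{n}e_{n1}^2\tilde{e}_{n2}+\frac{p_n(n-1)}{n(p_n-1)}f_{n2}^2$ (recall $f_{n2}=1-e_{n2}$, so your $(1-e_{n2})^2/(p_n-1)$ is the paper's $f_{n2}^2/(p_n-1)$). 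Where you genuinely differ is in how the mixed moments are produced: the paper uses the tangent-normal decomposition $\Xb_{ni}=u_{ni}\thetab_n+v_{ni}\Sb_{ni}$ and moments of the signs $\Sb_{ni}$, uniform on the subsphere orthogonal to $\thetab_n$ (Lemmas \ref{ref1} and \ref{moments}), whereas you read everything off the invariance-forced block form of $\Mb_n={\rm E}[\Xb_{n1}\Xb_{n1}']$, getting ${\rm E}[(\Xb_{n1}'\Xb_{n2})^2]=\trace(\Mb_n^2)$ and ${\rm Cov}[\Xb_{n1}'\Xb_{n2},\Xb_{n1}'\Xb_{n3}]={\rm E}[\Xb_{n2}]'\Mb_n{\rm E}[\Xb_{n3}]-e_{n1}^4=e_{n1}^2\tilde{e}_{n2}$. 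For the second moments needed here this is a perfectly valid and arguably slicker route; the paper's tangent-normal machinery only really pays off later, in the fourth-moment computations (Lemma \ref{moments4}) behind Theorem \ref{maintheorempower}, which a trace identity alone would not deliver as cleanly.

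The one soft spot is your justification of the final $o(1)$ step. The residuals you isolate are $\frac{p_n}{n}\tilde{e}_{n2}^2$, roughly $4p_n\,e_{n1}^2\tilde{e}_{n2}$, and an $O(1/n+1/p_n)$ multiple of $f_{n2}^2$; since the first two carry a factor $p_n$, ``boundedness of $e_{n1}$, $\tilde e_{n2}$, $1-e_{n2}$'' does not make them vanish, and Proposition \ref{propmoments} is not stated under the contiguous-alternative regime of Sections \ref{contiguitysec}--\ref{invariantsec}, so that appeal is out of place. What holds without any such restriction --- and is what is actually used downstream (see the convergence $s_n^2\to 1$ in the proof of Lemma \ref{Lemmeoneone}) --- is that each of your three prefactors equals its target $p_n$, $2np_n$, $1$ up to the factors $\frac{n-1}{n}$, $\frac{(n-1)^2}{n^2}$, $\frac{(n-1)p_n}{n(p_n-1)}$, all tending to one; the paper itself stops at the exact variance and treats the comparison with $\sigma_n^2$ as immediate in that sense. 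If you insist on the literal difference-$o(1)$ statement, you should instead point to a moment regime under which the residuals vanish (e.g.\ $\tilde{e}_{n2}=O(1/p_n)$ and $n\sqrt{p_n}\,e_{n1}^2=O(1)$, as in the alternatives of Theorem \ref{Powerprop}), rather than to boundedness.
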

\vspace{2mm}

Under~${\rm P}_0\n$, $e_{n1}=0$ and $\tilde{e}_{n2}=e_{n2}=1/p_n$, so that Proposition~\ref{propmoments} is compatible with the null values of~${\rm E}[{R}_n^{\rm St}]$ and ${\rm Var}[{R}_n^{\rm St}]$ provided above. Now, parallel to the null case (see Theorem~\ref{raylnull}), the Rayleigh test statistic, after appropriate standardization, is also asymptotically standard normal under a broad class of rotationally symmetric alternatives. More precisely, we have the following result (see Appendix~\ref{appB2} for a proof).

\begin{Theor} 
\label{maintheorempower}
Let $(p_n)$ be a sequence of positive integers diverging to~$\infty$ as $\ny$. Assume that the sequence~$({\rm P}^{(n)}_{F_n})$ is such that, as $\ny$,
(i)
$\min
\big(
\frac{p_n \tilde{e}_{n2}^2}{f_{n2}^2}
,
\frac{\tilde{e}_{n2}}{ne_{n1}^2}
\big)
=o(1)$,
(ii)
$\tilde{e}_{n4}/\tilde{e}_{n2}^2=o(n)$
and 
(iii)
$f_{n4}/f_{n2}^2=o(n)$.
Then,  under~${\rm P}^{(n)}_{F_n}$,
$
(R_n^{\rm St}-{\rm E}[{R}_n^{\rm St}])/\sigma_n
= 
\frac{\sqrt{2p_n}}{n\sigma_n} 
\sum_{1\leq i< j\leq n}
\,
 \big(\Xb_{ni}\pr \Xb_{nj}-e_{n1}^2\big)
\stackrel{\mathcal{D}}{\to}
\,
\mathcal{N}(0,1)
$
as~$n\to\infty$.
\end{Theor}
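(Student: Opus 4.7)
The plan is to decompose the centred $U$-statistic
\[
U_n := \sum_{i<j}(\Xb_{ni}\pr \Xb_{nj} - e_{n1}^2)
\]
into three pairwise uncorrelated pieces via the tangent-normal representation, show that one piece is asymptotically negligible under condition~(i), and establish a martingale CLT for the remainder. By rotation invariance of the Rayleigh statistic I may take $\thetab_n = \mathbf{e}_1$, and by rotational symmetry I may write $\Xb_{ni}=(U_{ni},\Wb_{ni}\pr)\pr$ with $U_{ni}:=\Xb_{ni}\pr\thetab_n$ and $\Wb_{ni}:=\sqrt{1-U_{ni}^2}\,\Sb_{ni}$, where $\Sb_{ni}$ is uniform on $\mathcal{S}^{p_n-2}$ and independent of~$U_{ni}$. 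Expanding
\[
\Xb_{ni}\pr\Xb_{nj}-e_{n1}^2=(U_{ni}-e_{n1})(U_{nj}-e_{n1})+e_{n1}[(U_{ni}-e_{n1})+(U_{nj}-e_{n1})]+\Wb_{ni}\pr\Wb_{nj}
\]
and summing yields $U_n=L_n+Q_n^{(A)}+Q_n^{(B)}$ with $L_n:=(n-1)e_{n1}\sum_{k=1}^n(U_{nk}-e_{n1})$, $Q_n^{(A)}:=\sum_{i<j}(U_{ni}-e_{n1})(U_{nj}-e_{n1})$ and $Q_n^{(B)}:=\sum_{i<j}\Wb_{ni}\pr\Wb_{nj}$. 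Using ${\rm E}[\Sb_{ni}]=\mathbf{0}$ and ${\rm E}[\Sb_{ni}\Sb_{ni}\pr]=(p_n-1)^{-1}\mathbf{I}$, these three summands are pairwise uncorrelated, with variances $n(n-1)^2e_{n1}^2\tilde{e}_{n2}$, $\binom{n}{2}\tilde{e}_{n2}^2$ and $\binom{n}{2}f_{n2}^2/(p_n-1)$ respectively. After multiplication by $2p_n/n^2$ these match the three terms of $\sigma_n^2$ in Proposition~\ref{propmoments}.

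Next I would show that the parallel piece $Q_n^{(A)}$ is asymptotically negligible after standardisation. Its contribution to $(2p_n/n^2){\rm Var}(U_n)$ is of order $p_n\tilde{e}_{n2}^2$, and condition~(i) makes this $o(\sigma_n^2)$: on the first branch, $p_n\tilde{e}_{n2}^2/f_{n2}^2=o(1)$ and $\sigma_n^2\geq f_{n2}^2$; on the second branch, $p_n\tilde{e}_{n2}^2\leq\tilde{e}_{n2}\cdot p_n\tilde{e}_{n2}=o(np_ne_{n1}^2\tilde{e}_{n2})$, itself dominated by~$\sigma_n^2$. Chebyshev's inequality then gives $(\sqrt{2p_n}/n)\,Q_n^{(A)}/\sigma_n=o_{\rm P}(1)$, so the problem reduces to proving asymptotic standard normality of $(\sqrt{2p_n}/n)(L_n+Q_n^{(B)})/\sigma_n$.

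For this remaining step, I would write $L_n+Q_n^{(B)}=\sum_{k=1}^n Z_{nk}$ with
\[
Z_{nk}:=(n-1)e_{n1}(U_{nk}-e_{n1})+\Wb_{nk}\pr\sum_{i<k}\Wb_{ni},
\]
and observe that $(Z_{nk})$ is a square-integrable martingale-difference sequence for $\mathcal{F}_k:=\sigma(\Xb_{n1},\dots,\Xb_{nk})$ (using ${\rm E}[\Wb_{nk}\mid\mathcal{F}_{k-1}]={\rm E}[\sqrt{1-U_{nk}^2}]\,{\rm E}[\Sb_{nk}]=\mathbf{0}$). I then apply the Hall--Heyde martingale CLT, whose two hypotheses are (a) $s_n^{-2}\sum_k{\rm E}[Z_{nk}^2\mid\mathcal{F}_{k-1}]\to 1$ in probability, with $s_n^2:={\rm Var}(L_n+Q_n^{(B)})$, and (b) a conditional Lindeberg condition. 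A short calculation using independence of $\Wb_{nk}$ from $\mathcal{F}_{k-1}$ gives the explicit form ${\rm E}[Z_{nk}^2\mid\mathcal{F}_{k-1}]=(n-1)^2e_{n1}^2\tilde{e}_{n2}+(f_{n2}/(p_n-1))\,\|\sum_{i<k}\Wb_{ni}\|^2$, so verifying~(a) reduces to controlling the variance of the $\Wb$-quadratic form, which enters condition~(iii) via $f_{n4}/f_{n2}^2=o(n)$. For~(b) I would strengthen to the Lyapunov bound $s_n^{-4}\sum_k{\rm E}[Z_{nk}^4]\to 0$: the linear contribution is bounded using $\tilde{e}_{n4}/\tilde{e}_{n2}^2=o(n)$ (condition~(ii)), while the ``cross'' contribution is bounded via the uniform-sphere identity ${\rm E}[(\mathbf{v}\pr\Sb_{nk})^4]=O(\|\mathbf{v}\|^4/p_n^2)$ combined once again with condition~(iii). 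I expect this Lyapunov step to be the main technical obstacle: the entanglement between the random lengths $\sqrt{1-U_{ni}^2}$ and the inner products $\Sb_{ni}\pr\Sb_{nj}$ makes the fourth-moment bookkeeping delicate, and conditions~(ii)--(iii) are precisely what ensure these estimates close.
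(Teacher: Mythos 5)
Your proposal is correct in substance and rests on the same core machinery as the paper --- the tangent-normal decomposition, moments of $\Sb_{ni}\pr\Sb_{nj}$ (Lemma~\ref{ref1}), and a martingale CLT with a conditional-variance condition plus a Lindeberg/Lyapunov condition checked through fourth moments --- but it organizes the argument differently. The paper keeps the whole centred statistic in a single martingale array: its increments $D_{n\ell}$ in~(\ref{Dnell}) contain the $\thetab_n$-parallel quadratic part, so condition~(i) is consumed \emph{inside} the concentration of $\sum_\ell\sigma^2_{n\ell}$ (the terms $A_n$ and $B_n$ in Lemma~\ref{Lemmeoneone}), while the Lindeberg step (Lemma~\ref{THElemmaspher}) uses (ii)--(iii) via Lemma~\ref{moments4}. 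You instead peel off $Q_n^{(A)}=\sum_{i<j}(U_{ni}-e_{n1})(U_{nj}-e_{n1})$ at the outset and discard it by Chebyshev, which is a legitimate and clean use of~(i): since $\sigma_n^2\geq f_{n2}^2$ and $\sigma_n^2\geq 2np_ne_{n1}^2\tilde e_{n2}$, the scaled variance $p_n\tilde e_{n2}^2$ of this piece is bounded by the minimum in~(i), hence $o(\sigma_n^2)$. What this buys is a simpler conditional-variance step --- your ${\rm E}[Z_{nk}^2\mid\mathcal{F}_{k-1}]$ is a deterministic term plus $\frac{f_{n2}}{p_n-1}\|\sum_{i<k}\Wb_{ni}\|^2$, so no analogues of $A_n,B_n$ arise and only (iii) and the $1/p_n$ gain are needed there; the small price is that you must also record that $\frac{2p_n}{n^2}{\rm Var}[L_n+Q_n^{(B)}]/\sigma_n^2\to 1$ (again from (i) and $p_n\to\infty$) to pass from your internal normalization $s_n$ back to $\sigma_n$, a step you gesture at but should state explicitly.

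One concrete warning on the Lyapunov step you flag as delicate: it does close under (ii)--(iii), but only if you compute ${\rm E}\big[\|\sum_{i<k}\Wb_{ni}\|^4\big]$ exactly enough, namely ${\rm E}\big[\big(\sum_{i<k}v_{ni}^2\big)^2\big]=(k-1)f_{n4}+(k-1)(k-2)f_{n2}^2$ and ${\rm E}[(\Wb_{ni}\pr\Wb_{nj})^2]=f_{n2}^2/(p_n-1)$, which is precisely the distinction between the all-indices-equal and two-pairs cases in Lemma~\ref{moments4}. If one instead bounds the diagonal crudely by $k^2f_{n4}$ (Cauchy--Schwarz), the cross contribution to $s_n^{-4}\sum_k{\rm E}[Z_{nk}^4]$ becomes of order $f_{n4}^2/(nf_{n2}^4)=n\big(f_{n4}/(nf_{n2}^2)\big)^2$, which condition~(iii) alone does not control; with the exact two-pairs moment it is of order $\big(f_{n4}/(nf_{n2}^2)\big)^2+f_{n4}/(nf_{n2}^2)=o(1)$, and the mixed term is handled by Cauchy--Schwarz giving $\big(\tfrac{\tilde e_{n4}}{n\tilde e_{n2}^2}\cdot\tfrac{f_{n4}}{nf_{n2}^2}\big)^{1/2}=o(1)$, exactly as in the paper's bound in Lemma~\ref{THElemmaspher}. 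With that bookkeeping made explicit, your route is a complete and slightly streamlined variant of the paper's proof.
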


This result applies under very mild assumptions, that in particular do not impose absolute continuity nor any other regularity conditions. The only structural assumptions are the conditions~(i)-(iii) above. These, however, may only be violated for rotationally symmetric distributions that are very far from the null of uniformity (hence, for alternatives under which there is in practice no need for a test of uniformity). Indeed, a necessary --- yet far from sufficient --- condition for (i)-(iii) to be violated is that~$\Xb_{n1}\pr\thetab_n$ converges in probability to some constant~$c(\in[-1,1])$. Moreover, in the FvML case, the conditions~(i)-(iii) \emph{always} hold, that is, they hold without any constraint on the concentration~$\kappa_n$ nor on the way the dimension~$p_n$ goes to infinity with~$n$ (the proof of this statement is very lengthy and requires original results on modified Bessel functions ratios, hence is provided in the supplementary article~\cite{Cut2015}). 

%


Theorems~\ref{maintheorempower} allows to compute the asymptotic power of the Rayleigh test under appropriate sequences of alternatives. As mentioned above, the null of uniformity~${\cal H}_{0n}$ yields~$e_{n1}=0$ and $\tilde{e}_{n2}=1/p_n$. Here, we therefore consider ``local" departures from uniformity of the form
$
{\cal H}_{1n}: 
\big\{ 
{\rm P}_{F_n}\n
: 
e_{n1}=0+\nu_n \tau
, 
\,
\tilde{e}_{n2}=(1/p_n) + \xi_n \eta
\big\}
\cdot 
$
The following result provides the asymptotic power of the high-dimensional Rayleigh test in~(\ref{RayHDtraindelavie}) under sequences of local alternatives that, as we will show, are intimately related to those we considered in Sections~\ref{oraclesec}-\ref{invariantsec} (see Appendix~\ref{appB2} for a proof).

\begin{Theor} 
\label{Powerprop}
Let $(p_n)$ be a sequence of positive integers diverging to~$\infty$ as $\ny$. Let the sequence~$({\rm P}^{(n)}_{F_n})$ satisfy the  assumptions of Theorem~\ref{maintheorempower} and be such that
\vspace{-1mm}
\begin{equation}
\label{lalt}
e_{n1}
=
\frac{\tau}{n^{1/2}p_n^{1/4}}
+
o\bigg(\frac{1}{n^{1/2}p_n^{1/4}}\bigg)
\qquad\textrm{and}\qquad 
\tilde{e}_{n2}
=
\frac{1}{p_n}
+
o\Big(\frac{1}{p_n}\Big)
,
\end{equation}
for some~$\tau\geq 0$. Then, under~${\rm P}\n_{F_n}$, the asymptotic power of the high-dimensional Rayleigh test in~(\ref{RayHDtraindelavie}) is given by
$
1
- 
\Phi
\big( 
z_\alpha- \frac{\tau^2}{\sqrt{2}}
\big).
$
\end{Theor}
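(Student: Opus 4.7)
The plan is to combine Theorem~\ref{maintheorempower} (which is directly assumed to apply) with the exact moment formulas of Proposition~\ref{propmoments}, evaluated at the local-alternative rates specified in~(\ref{lalt}), and to conclude by Slutsky's lemma. Since Theorem~\ref{maintheorempower} already delivers $\big(R_n^{\rm St}-{\rm E}[R_n^{\rm St}]\big)/\sigma_n \stackrel{\mathcal{D}}{\to} \mathcal{N}(0,1)$ under ${\rm P}^{(n)}_{F_n}$, it suffices to show that, under the sequences in~(\ref{lalt}), ${\rm E}[R_n^{\rm St}]\to \tau^2/\sqrt{2}$ and $\sigma_n^2\to 1$. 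The limiting power in~(\ref{powRay}) then follows at once by writing $R_n^{\rm St}=\sigma_n\big[(R_n^{\rm St}-{\rm E}[R_n^{\rm St}])/\sigma_n\big]+{\rm E}[R_n^{\rm St}]$ and applying the continuous-mapping theorem to the event $\{R_n^{\rm St}>z_\alpha\}$.

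For the mean, Proposition~\ref{propmoments} gives ${\rm E}[R_n^{\rm St}]=(n-1)\sqrt{p_n}\,e_{n1}^2/\sqrt{2}$, which, plugging in $e_{n1}^2 = \tau^2/(np_n^{1/2}) + o(1/(np_n^{1/2}))$, immediately yields $\frac{n-1}{n}\cdot\frac{\tau^2}{\sqrt{2}}+o(1)\to \tau^2/\sqrt{2}$. For the variance, I would analyze the three summands of $\sigma_n^2$ separately using the stated rates: $p_n\tilde e_{n2}^2 = p_n(1/p_n+o(1/p_n))^2 = 1/p_n+o(1/p_n)\to 0$, and $2np_n e_{n1}^2\tilde e_{n2} = 2np_n\cdot O(n^{-1}p_n^{-1/2})\cdot O(p_n^{-1}) = O(p_n^{-1/2})\to 0$. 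The only non-trivial term is $f_{n2}^2=(1-e_{n2})^2$, for which one uses the identity $e_{n2}=\tilde e_{n2}+e_{n1}^2$ to obtain $e_{n2} = 1/p_n + \tau^2/(n\sqrt{p_n}) + o(\cdot)\to 0$, giving $f_{n2}^2\to 1$ and hence $\sigma_n^2\to 1$.

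The bulk of the argument is mechanical, and the only point worth flagging as a potential pitfall is the verification that $f_{n2}^2\to 1$, since $f_{n2}$ is not directly controlled by the hypotheses in~(\ref{lalt}): one must invoke the variance decomposition $e_{n2}=\tilde e_{n2}+e_{n1}^2$ to express $f_{n2}=1-\tilde e_{n2}-e_{n1}^2$, and then verify that both deviations are $o(1)$ at the chosen rates. Once these limits are in hand, Slutsky's lemma combined with Theorem~\ref{maintheorempower} gives $R_n^{\rm St}\stackrel{\mathcal{D}}{\to}\mathcal{N}(\tau^2/\sqrt{2},1)$ under ${\rm P}^{(n)}_{F_n}$, from which the asymptotic power of the Rayleigh test is $\lim_{n\to\infty}{\rm P}^{(n)}_{F_n}[R_n^{\rm St}>z_\alpha] = 1-\Phi(z_\alpha - \tau^2/\sqrt{2})$, as claimed.
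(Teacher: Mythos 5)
Your proposal is correct and follows essentially the same route as the paper: Theorem~\ref{maintheorempower} combined with the limits ${\rm E}[R_n^{\rm St}]=(n-1)\sqrt{p_n}\,e_{n1}^2/\sqrt{2}\to\tau^2/\sqrt{2}$ and $\sigma_n^2\to 1$ (your observation $f_{n2}=1-e_{n2}=1-\tilde e_{n2}-e_{n1}^2$ being the key to the third term) under the rates in~(\ref{lalt}). The only cosmetic difference is the final step, where you invoke Slutsky's lemma to get $R_n^{\rm St}\stackrel{\mathcal{D}}{\to}\mathcal{N}(\tau^2/\sqrt{2},1)$, whereas the paper controls the moving threshold via the uniform (Polya-type) convergence of distribution functions from Lemma~2.11 of \cite{van1998}; both arguments are equally valid.
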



In order to link these alternatives to those considered earlier, note that, as~$\ny$ under~${\rm P}\n_{\thetab_n,\kappa_n,f}$, with $\kappa_n=\xi_n \sqrt{p_n/n}$, where the positive sequence~$(\xi_n)$ is~$o(\sqrt{n})$, we have
\begin{eqnarray}
e_{n1}
&\!\!=\!\!&
\Big(\frac{c_{p_n}}{c_{p_n,\kappa_n,f}}\Big)^{-1}
\,
\frac{c_{p_n}}{\kappa_n}
\int_{-1}^1 (1-s^2)^{(p_n-3)/2} \, \kappa_n s f(\kappa_n s)\,ds
\nonumber
\\[2mm]
&\!\!=\!\!&
\bigg(
1+
\frac{\kappa_n^2}{2p_n}f''(0)
+
o\bigg(\frac{\kappa^2_n}{p_n} \bigg)
\bigg)^{-1}
\bigg(
\frac{\kappa_n}{p_n}   + 
o\bigg(\frac{\kappa_n}{p_n} \bigg)
\bigg)
\label{genexpande1}
\end{eqnarray}
and
\begin{eqnarray}
e_{n2}
\nonumber
&\!\!=\!\!&
\Big(\frac{c_{p_n}}{c_{p_n,\kappa_n,f}}\Big)^{-1}
\,
\frac{c_{p_n}}{\kappa_n^2}
\int_{-1}^1 (1-s^2)^{(p_n-3)/2} \, (\kappa_n s)^2 f(\kappa_n s)\,ds
\\[2mm]
&\!\!=\!\!&
\bigg(
1+
\frac{\kappa_n^2}{2p_n}f''(0)
+
o\bigg(\frac{\kappa^2_n}{p_n} \bigg)
\bigg)^{-1}
\bigg(
\frac{1}{p_n}   + 
o\bigg(\frac{1}{p_n} \bigg)
\bigg)
\label{genexpande2}
,
\end{eqnarray}
where we used four times Lemma~\ref{lemcontig}.
For the contiguous alternatives in Theorem~\ref{LANtheor}, that is for~${\rm P}\n_{\thetab_n,\kappa_n,f}$, with $\kappa_n=\tau_n \sqrt{p_n/n}$ (where~$(\tau_n)$ is bounded),
(\ref{genexpande1})-(\ref{genexpande2}) provide
\begin{equation}
\label{fsh}
e_{n1}
=
\frac{\tau_n}{\sqrt{np_n}}
+
o\bigg(\frac{1}{\sqrt{np_n}}\bigg)
\qquad\textrm{and}\qquad 
\tilde{e}_{n2}
=
\frac{1}{p_n}
+
o\bigg(\frac{1}{p_n}\bigg)
.
\end{equation}
Theorem~\ref{Powerprop} implies that the asymptotic power of the  high-dimensional Rayleigh test under the alternatives~(\ref{fsh}) is equal to~$\alpha$, which confirms (see Section~\ref{oraclesec}) that this test is blind to contiguous alternatives. 

Now, at least if~$p_n=o(n^2)$ (a constraint that is actually superfluous in the FvML case, as it can be seen by using the Amos-type bounds provided in Lemma~S.3.2 from~\cite{Cut2015}), the more
\vspace{-1mm}
 severe alternatives~${\rm P}\n_{\thetab_n,\kappa_n,f}$, with $\kappa_n=\tau p_n^{3/4}/\sqrt{n}$, from Theorem~\ref{LANinvartheor} translate --- still in view of~(\ref{genexpande1})-(\ref{genexpande2}) --- into those in~(\ref{lalt}). This shows that the asymptotic powers of the high-dimensional Rayleigh test computed in the FvML case via Le Cam's third lemma (see~(\ref{powRay})) actually also hold away from the FvML case. Clearly, this further supports the conjecture from Section~\ref{invarianthighsec} that, under the assumption that~$p_n=o(n^2)$, Theorem~\ref{LANinvartheor} holds for an essentially arbitrary~$f$.

\section{A Monte Carlo study}
\label{simusec}

In this section, we present the results of a Monte Carlo study we conducted to check the validity of our asymptotic results. We performed two simulations. In the first one, we generated independent random samples of the form
\begin{equation} 
\label{samples}
\Xb_{i;j}^{(\ell)} 
\quad 
i=1, \ldots, n, 
\quad
j=1,2, 
\quad
\ell=0,1,2,3,4 
.
 \end{equation}
For~$\ell=0$, the common distribution of the $\Xb_{i;j}^{(\ell)}$'s is the uniform distribution on the unit sphere~$\mathcal{S}^{p-1}$, while, for~$\ell>0$, the $\Xb_{i;j}^{(\ell)}$'s have an FvML distribution on~$\mathcal{S}^{p-1}$ with location $\thetab=(1, 0, \ldots, 0)\pr \in \R^p$ and concentration~$\kappa_j^{(\ell)}$, with
$$
\kappa_1^{(\ell)}=0.6\ell \, \sqrt{\frac{p}{n}}
\quad
\textrm{ and }
\quad
\kappa_2^{(\ell)}=0.6\ell\, \frac{p^{3/4}}{\sqrt{n}}
\cdot
$$
In the second simulation, we considered again independent random samples of the form~(\ref{samples}), still with $\Xb_{i;j}^{(0)}$'s that are uniform over~$\mathcal{S}^{p-1}$. Here, however, the $\Xb_{i;j}^{(\ell)}$'s, for~$\ell=1,2,3,4$, are rotationally symmetric with location $\thetab=(1, 0, \ldots, 0)\pr \in \R^p$ and are such that the $\thetab\pr \Xb_{i;j}^{(\ell)}$'s are beta 
with mean~$e_{1;j}^{(\ell)}$ and variance~$\tilde{e}_{2;j}=1/p$, where we let 
$$
e_{1;1}^{(\ell)}
=
\frac{0.6\ell}{\sqrt{np}}
\quad
\textrm{ and }
\quad
e_{1;2}^{(\ell)}
=
\frac{0.6\ell}{n^{1/2}p^{1/4}}
$$ 
(this beta example is associated with a non-monotonic nuisance~$f$, which is allowed in Section~\ref{Rayleighsec}). In both simulations, the value $\ell=0$ corresponds to the null hypothesis of uniformity, while $\ell=1,2,3,4$ provide increasingly severe alternatives. The case~$j=1$ relates to the contiguous alternatives (see Theorem~\ref{LANtheor}) and the corresponding (more general) alternatives in~(\ref{fsh}), whereas~$j=2$ is associated with the alternatives under which the Rayleigh test shows non-trivial asymptotic powers in the high-dimensional setup (see Theorem~\ref{LANinvartheor} and the alternatives~(\ref{lalt})).  

For any~$(n,p)\in C\times C$, with $C:=\{30, 100, 400\}$, any~$j\in\{1,2\}$, and any~$\ell\in\{0,1,2,3,4\}$, we generated $M=2,500$ independent random samples~$\Xb_{i;j}^{(\ell)}$, $i=1, \ldots, n$, as described above, and evaluated the rejection frequencies of~(i) the \mbox{specified-$\thetab_n$} test~$\phi_{\thetab_n}\n$ in~(\ref{deforayoup}) and of (ii) the high-dimensional Rayleigh test~$\phi\n$ in~(\ref{RayHDtraindelavie}), both conducted at nominal level~$5\%$. Rejection frequencies are plotted in Figures~\ref{FigFvML} and~\ref{Figbeta}, for FvML and beta-type alternatives, respectively. In each figure, we also plot the corresponding asymptotic powers, obtained from~(\ref{poworacle}), (\ref{powRay}), Theorem~\ref{Powerprop}, and the fact that~$\phi_{\thetab_n}\n$ 
is consistent against ($j=2$)-alternatives. 

Clearly, for both simulations, rejection frequencies match extremely well the corresponding asymptotic powers, irrespective of the tests and types of alternatives considered (the only possible exception is the test~$\phi_{\thetab_n}\n$ under ($\ell=1,j=2$)-alternatives; this, however, is only a consequence of the lack of continuity of the corresponding asymptotic power curves). Remarkably, this agreement is also reasonably good for moderate sample size~$n$ and dimension~$p$. Beyond validating our asymptotic results, this Monte Carlo study therefore also shows that these results are relevant for practical values of~$n$ and~$p$.

\begin{figure}[htbp!]  
\begin{center}
\includegraphics[width=1.03\linewidth]{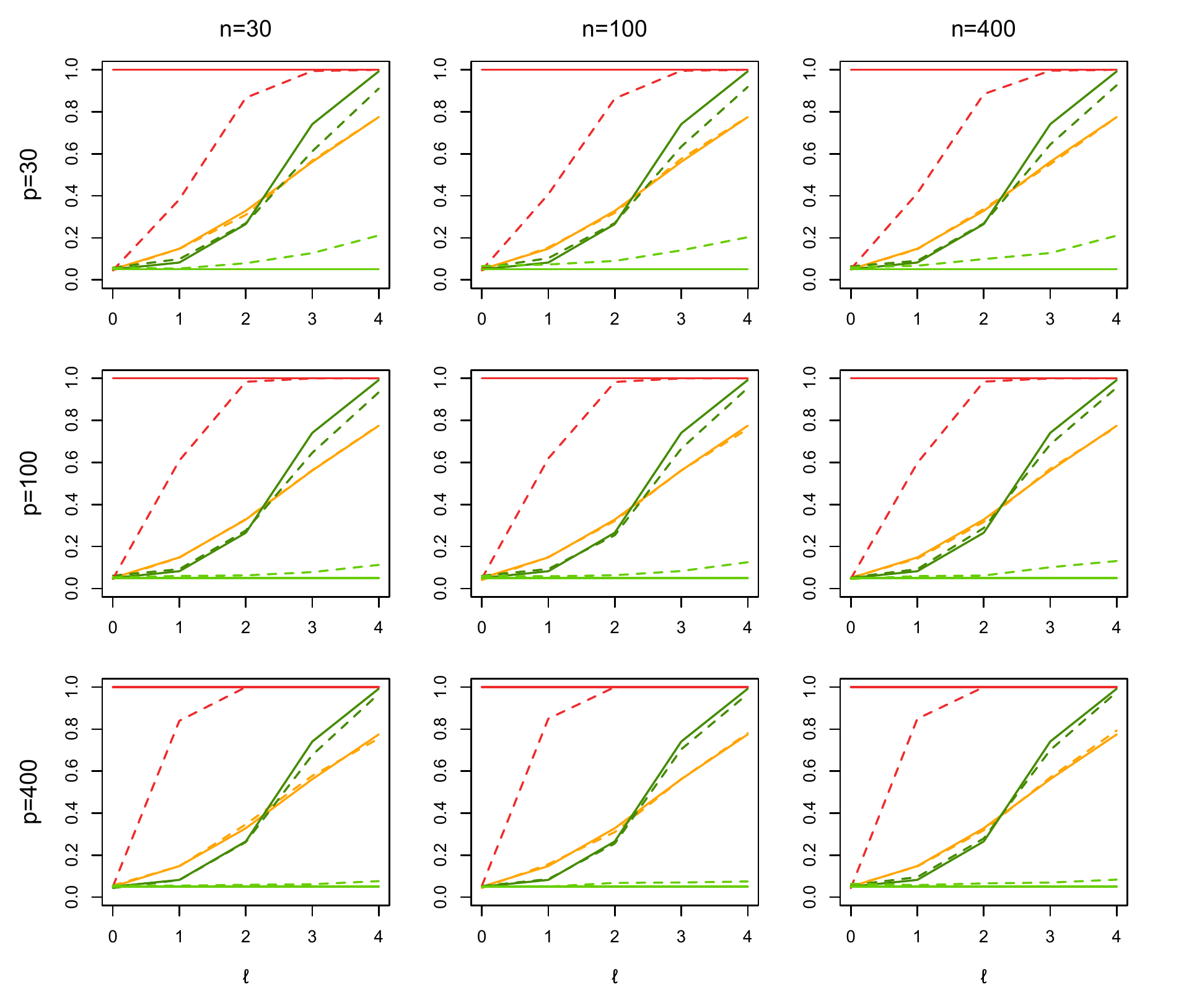}
\caption{\small Rejection frequencies (dashed) and asymptotic powers (solid), under the null of uniformity over the $p$-dimensional unit sphere ($\ell=0$) and increasingly severe FvML alternatives ($\ell=1,2,3,4$), of the \mbox{specified-$\thetab_n$} test~$\phi_{\thetab_n}\n$  in~(\ref{deforayoup}) (red/orange) and the high-dimensional Rayleigh test~$\phi\n$ in~(\ref{RayHDtraindelavie}) (light/dark green). Light colors (orange and light green) are associated with contiguous alternatives, whereas dark colors (red and dark green) correspond to the more severe alternatives under which the Rayleigh test shows non-trivial asymptotic powers  in high dimensions; see Section~\ref{simusec} for details.}
\label{FigFvML}
\end{center}
\end{figure}

\begin{figure}[htbp!] 
\begin{center}
\includegraphics[width=1.03\linewidth]{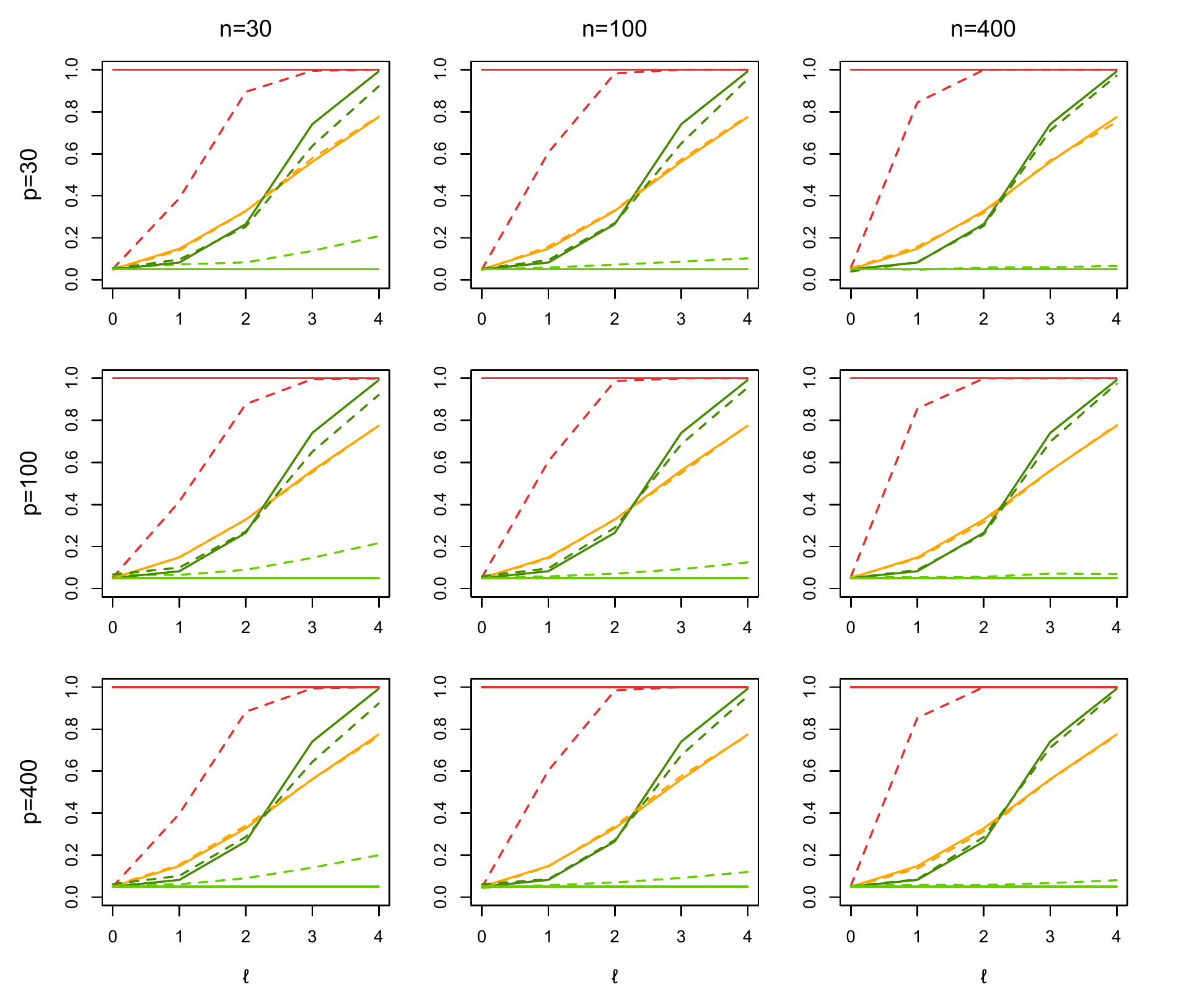}
\caption{\small Rejection frequencies (dashed) and asymptotic powers (solid), under the null of uniformity over the $p$-dimensional unit sphere ($\ell=0$) and  increasingly severe ``beta" rotationally symmetric  alternatives ($\ell=1,2,3,4$), of the \mbox{specified-$\thetab_n$} test~$\phi_{\thetab_n}\n$ in~(\ref{deforayoup}) (red/orange) and the high-dimensional Rayleigh test~$\phi\n$ in~(\ref{RayHDtraindelavie}) (light/dark green). Light colors (orange and light green) are associated with contiguous alternatives, whereas dark colors (red and dark green) correspond to the more severe alternatives under which the Rayleigh test shows non-trivial asymptotic powers  in high dimensions; see Section~\ref{simusec} for details.}
\label{Figbeta}
\end{center} 
\end{figure}


\section{An application}
\label{realsec}

Since the seminal paper \cite{LedWol2002}, one of the most widely considered testing problems in high-dimensional statistics is the problem of testing for sphericity. A possible approach to test for sphericity about a specified centre (without loss of generality, the origin of~$\R^p$) is to perform a test of uniformity on the sphere~$\mathcal{S}^{p-1}$ on ``spatial signs", that is, on the observations projected on~$\mathcal{S}^{p-1}$; see, among others, \cite{Caietal2013}, where this is used in a possibly high-dimensional setup, and \cite{cueetal2009}, where it is argued that ``\emph{in most practical cases the violations of sphericity will arise from the non-fulfillment of uniformity on the unit sphere for projected data}". This is particularly true in the high-dimensional case, since the concentration-of-measure phenomenon there implies that information lie much more in the directions of the observations from the origin than in their distances from the origin (incidentally, note that  \cite{JuaPri2001} also invoked the same argument to adopt a directional approach for outlier detection in high dimensions). 

As showed in the previous sections, the high-dimensional Rayleigh test 
will show power against \emph{skewed} rotationally symmetric distributions on the sphere (skewness arises from the monotonicity of the corresponding nuisance~$f$). 
On the contrary, the Rayleigh test will be blind to any non-spherical distribution in~$\R^p$ whose projection on the sphere charges antipodal regions equally. In particular, it will show no power against elliptical alternatives, hence also against spiked alternatives (that is, against alternatives associated with scatter matrices of the form $\Sigb=\sigma ({\bf I}_p+ \lambda \betab \betab\pr)$, with $\sigma,\lambda>0$ and $\betab\in\mathcal{S}^{p-1}$). Interestingly, most (if not all) tests for sphericity in high dimensions are designed to detect elliptical or spiked alternatives. This is the case, e.g., both for the Gaussian sphericity test ($\phi_{\mathcal{N}}\n$, say) from \cite{Joh1972} and for the sign test of sphericity ($\phi_S\n$, say) from~\cite{HP06} (these tests were shown to be valid in high-dimensions in \cite{LedWol2002} and~\cite{Zouetal2013}/\cite{PaiVer2015}, respectively). 
In line with this, theoretical efforts have so far focused on spiked alternatives; see, e.g., \cite{Onaetal2013,Onaetal2014}, where powers of various tests of sphericity under  high-dimensional spiked alternatives were investigated. 

To illustrate these antagonistic power behaviours, we performed the following simulation exercise involving the Gaussian sphericity test~$\phi_{\mathcal{N}}\n$ and the sign sphericity test~$\phi_S\n$ (both in their version to test for sphericity about the origin of~$\R^p$), as well as the high-dimensional Rayleigh test~$\phi_R\n$. 
%
%
%
%
For~$\ell=0,1,2,3,4$ and~$n=p=100$, we generated 10,000 $p$-dimensional independent samples~$\Xb_{i; \ell}^{(1)}$, $i=1, \ldots, n$, and~$\Xb_{i; \ell}^{(2)}$, $i=1, \ldots, n$, from two different alternatives to sphericity~: 
\begin{itemize}
\item[(i)] $\Xb_{i; \ell}^{(1)}$, $i=1, \ldots, n$ form a random sample from the $p$-variate skew-normal distribution with location vector~${\bf 0}$, scatter matrix~${\bf I}_p$ and skewness vector~$(\ell, \ldots, \ell)'\in\R^p$; see \cite{AzzCap1999};
\item[(ii)] $\Xb_{i; \ell}^{(2)}$, $i=1, \ldots, n$ form a random sample from the $p$-variate normal distribution with mean~${\bf 0}$ and covariance matrix~${\bf I}_p+ \ell {\bf e}_1{\bf e}_1\pr$, with~${\bf e}_1=(1,0,\ldots,0)'\in\R^p$.
\end{itemize}  
For both~(i)-(ii), $\ell=0$ is associated with the null of sphericity about the origin of~$\R^p$, whereas $\ell=1,2,3,4$ provide increasingly severe alternatives. Figure \ref{powerss} plots the resulting empirical powers of the three tests mentioned above, all performed at nominal level~$5\%$. Results confirm that the Rayleigh test~$\phi_R\n$ performs quite well under alternatives of type~(i) but shows no power against alternatives of type~(ii), whereas the tests~$\phi_{\mathcal{N}}\n$ and~$\phi_S\n$ do the exact opposite. In practice, thus, as soon as the Rayleigh test and more standard tests of sphericity lead to opposite rejection decisions, practitioners are offered some insight on what type of deviation from sphericity they are likely to be facing.
 

\begin{figure}[!htbp]
\includegraphics[width=\linewidth, height=73mm]{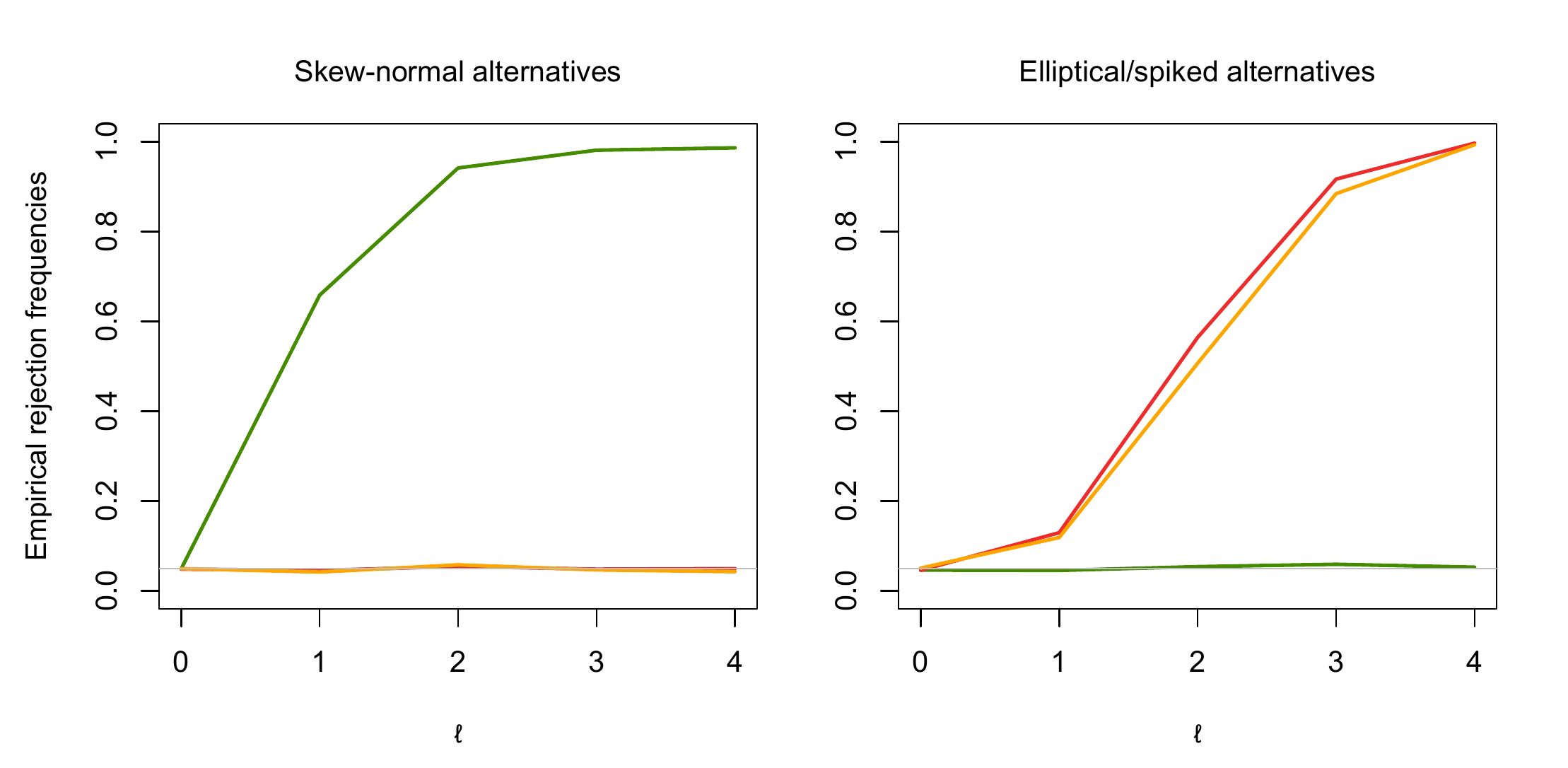} 
\caption{ \small (Left:) Rejection frequencies, under the null of sphericity in~$\R^p$ ($\ell=0$) and increasingly severe skew-normal 
\vspace{-.5mm}
alternatives ($\ell=1,2,3,4$), of the Rayleigh sphericity test~$\phi_R\n$ (green), along with the more classical
 sphericity tests~$\phi_{S}\n$ (orange) and~$\phi_{\mathcal{N}}\n$ (red).
(Right:) The corresponding rejection frequencies under some $p$-variate spiked alternatives. In both cases, the dimension~$p$ and the sample size~$n$ are equal to~$100$, the nominal level is~$5\%$, and the number of replications is~$10,000$; 
see Section~\ref{realsec} for details.
}
 \label{powerss}    
\end{figure} 

We illustrate this on a real data example. We considered the gene expression dataset analyzed in \cite{Eisetal1998}; more precisely, we restricted to a subsample of~100 ribosomal proteins from this dataset. The data then take the form of a matrix ${\bf X}=(X_{ij})$, where~$X_{ij}$ is the $j$th expression value ($j=1, \ldots, p=79$) of the $i$th gene ($i=1,\ldots,n=100$) (even though~$n>p$, the present data may be considered high-dimensional since the small value of~$n/p$ prevents relying on fixed-$p$ asymptotic results). The rows of~$\Xb$ (the ``expression vectors") are obtained from DNA microarray experiments. After imputing missing data (by replacing any missing entry in~$\Xb$ with the sample average of available measurements on the same variable) and centering the observations via the sample average, we performed the Rayleigh test~$\phi_R\n$ and its competitors~$\phi_{\mathcal{N}}\n$ and~$\phi_S\n$. While centering still leaves some space for rejection by~$\phi_R\n$ (that is indeed based on the sample average of \emph{projected} observations), the Rayleigh test interestingly provides a $p$-value above~$.9999$,
whereas those of~$\phi_{\mathcal{N}}\n$ and~$\phi_S\n$ are below~$.0001$.
Hence, at any usual nominal level, the null of sphericity is rejected, and the outcome of the various tests suggest that the deviation from sphericity is of an elliptical, or at least of a centrally symmetric, nature.
This may be useful to guide further modelling of this gene expression dataset.

\section{Conclusions and perspectives}
\label{conclusec}

In this final section, we summarize the results of the paper and present perspectives for future research.

\subsection{Summary}

We considered the problem of testing uniformity on the unit sphere in the low- and high-dimensional setups. Rotationally symmetric alternatives with modal location~$\thetab_n$, concentration~$\kappa_n$ and functional parameter~$f$ were considered. We showed that~$\kappa_n\sim \sqrt{p_n/n}$ provides contiguous alternatives. For specified~$\thetab_n$, a local asymptotic normality result was established (at the aforementioned contiguity rate), which allowed, both in low and high dimensions, to define locally asymptotically most powerful tests for the \mbox{specified-$\thetab_n$} problem.

In practice, however, $\thetab_n$ may rarely be assumed to be known. In the corresponding \mbox{un\mbox{specified-$\thetab_n$}} problem, we showed that the Rayleigh test enjoys nice asymptotic optimality properties, both in the low- and high-dimensional cases. In low dimensions, it is locally asymptotically maximin, irrespective of~$f$. In high dimensions, it is locally asymptotically most powerful invariant in the FvML case, and a conjecture --- that is strongly supported by a fourth-order expansion of the relevant $f$-based local log-likelihood ratio and by the computation of asymptotic powers in Section~\ref{Rayleighsec} --- states that, provided that~$p_n=o(n^2)$, this optimality holds for any~$f$ that is five times differentiable at~$0$. 

Our results fully characterize the cost of the possible unspecification of~$\thetab_n$. In low dimensions, this cost is in terms of asymptotic powers but not in terms of rate. In high-dimensions, however, there is a cost in terms of rate, as optimal tests cannot detect  the contiguous alternatives in~$\kappa_n\sim \sqrt{p_n/n}$, but only the more severe alternatives in~$\kappa_n\sim p_n^{3/4}/\sqrt{n}$. Simulation results are in remarkable agreement with our asymptotic results, irrespective of the relative magnitude of~$n$ and~$p$ --- which materializes the robustness of most of our results in the rate at which~$p_n$ goes to infinity with~$n$. A real data example illustrated the usefulness of the high-dimensional Rayleigh test in the framework of testing for sphericity.

\subsection{Perspective for future research}

In the distributional framework described in Section~\ref{contiguitysec}, the problem of testing uniformity consists in testing the null hypothesis that the concentration parameter~$\kappa_n$ is equal to zero. Depending on the information at hand, the other parameters, namely the modal location~$\thetab_n$ and the infinite-dimensional parameter~$f$, may be regarded as specified or unspecified. If~$f$ is specified, then the problem is of a parametric nature and optimality quite naturally relates to the local asymptotic normality of the corresponding fixed-$f$ submodel (both the specified- and unspecified-$\thetab_n$ parametric problems can be considered). We showed that, for any sufficiently smooth~$f$ in a neighbourhood of the origin, the test in~(\ref{deforayoup}) and the Rayleigh test achieve the $f$-parametric efficiency bounds in the specified- and unspecified-$\thetab_n$ problems, respectively.

Since it can hardly be assumed in practice that~$f$ is known, it is more natural to adopt a semiparametric point of view under which~$f$ remains unspecified. The optimality results stated in this paper should then be read in a semiparametric sense, under unspecified~$f$ in the specified-$\thetab_n$ problem, and under unspecified~$(\thetab_n,f)$ in the unspecified-$\thetab_n$ one. In all cases, such results are pointwise in~($\thetab_n,f$) and relate to the corresponding semiparametric efficiency bounds at~($\thetab_n,f$); see, e.g., \cite{Bic1998}. In the present setup, it is not needed to go through tangent space calculations to derive the resulting semiparametrically optimal tests; indeed, since the test in~(\ref{deforayoup}) and the Rayleigh test are parametrically optimal at any smooth~$f$ (for the specified- and unspecified-$\thetab_n$ problems, respectively), they also are semiparametrically optimal at such~$f$. Another corollary of our results is that semiparametrically efficiency bounds at~($\thetab_n,f$)  do not depend on~$(\thetab_n,f)$ but differ in the specified- and unspecified-$\thetab_n$ problems.

Now, the problem of testing uniformity over the unit sphere is primarily of a \emph{nonparametric} nature. Even if the distributional framework described in Section~\ref{contiguitysec} is considered, it is therefore valid to adopt a nonparametric point of view and to try to identify, e.g., \emph{minimax separation rates}; see, e.g., \cite{Ing2000} or, in a directional context,  \cite{Fayetal2013}, \cite{TM14} and \cite{TM15}. This approach is fundamentally different from the semiparametric one adopted in this work. In particular, instead of providing \emph{pointwise} results in~$(\thetab_n,f)$, this approach aims at identifying consistency rates that are associated with the \emph{worst-in-}$f$ (resp., \emph{worst-in-}($\thetab_n,f)$) performances that can be achieved in the specified-$\thetab_n$ (resp., \mbox{unspecified-$\thetab_n$}) problem. This fundamental difference between the semiparametric and nonparametric approaches above does not make it possible to translate our results in terms of minimax separation rates. Nevertheless, preliminary results indicate that, at least for the specified-$\thetab_n$ problem, the consistency rates described in this paper are also minimax separation rates and that the test~$\phi_{\thetab_n}\n$ is ``rate-optimal in the minimax sense" (obviously, it would be natural to consider further the unspecified-$\thetab_n$ problem).  These results, however, require much work and rely on other techniques than those considered in the present paper, hence will be presented elsewhere.

\appendix

\section{Proofs for Sections~2 to~4}
\label{appA}

The proofs of Theorems~\ref{contigtheor} and~\ref{LANtheor} require the following preliminary result. 


\begin{Lem}
\label{lemcontig}
Let $g:\R\to \R$ be twice differentiable at~$0$. Let $\kappa_n$ be a positive sequence that is $o(\sqrt{p_n})$ as~$\ny$. Then
$
R_n(g)
:=
c_{p_n}
\int_{-1}^1 (1-s^2)^{(p_n-3)/2} g(\kappa_n s)  \,ds
= 
g(0)
+
\frac{\kappa_n^2}{2p_n} g''(0)
+
o\big( \frac{\kappa_n^2}{p_n} \big)
.
$
\end{Lem}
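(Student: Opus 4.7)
The plan is to carry out a second-order Taylor expansion of $g$ about~$0$ and identify the first three moments of the ``uniform marginal'' distribution appearing in $R_{n}(g)$. Writing
$$
g(t)=g(0)+g'(0)\,t+\tfrac{1}{2}g''(0)\,t^{2}+r(t),
$$
with $r(t)/t^{2}\to 0$ as $t\to 0$, and noting from~(\ref{unifcdf}) that $s\mapsto c_{p_{n}}(1-s^{2})^{(p_{n}-3)/2}\mathbb{I}[s\in[-1,1]]$ is exactly the density of~$\Xb_{n1}\pr\thetab_{n}$ under the null of uniformity on~$\mathcal{S}^{p_{n}-1}$, we have $R_{n}(g)={\rm E}[g(\kappa_{n}S_{n})]$, where $S_{n}$ denotes a random variable with that density.

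First I would observe that ${\rm E}[S_{n}]=0$ by symmetry and ${\rm E}[S_{n}^{2}]=1/p_{n}$, since the components of a vector uniformly distributed on~$\mathcal{S}^{p_{n}-1}$ each have variance~$1/p_{n}$. Substituting the Taylor expansion of~$g$ and these two moments into $R_{n}(g)={\rm E}[g(\kappa_{n}S_{n})]$ immediately yields
$$
R_{n}(g)=g(0)+\frac{\kappa_{n}^{2}}{2p_{n}}g''(0)+{\rm E}[r(\kappa_{n}S_{n})],
$$
so everything boils down to showing that ${\rm E}[r(\kappa_{n}S_{n})]=o(\kappa_{n}^{2}/p_{n})$.

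To control this remainder, write $r(t)=t^{2}\rho(t)$ with $\rho(t)\to 0$ as $t\to 0$, so the task is equivalent to $p_{n}\,{\rm E}[\rho(\kappa_{n}S_{n})\,S_{n}^{2}]\to 0$. Set $Y_{n}:=\sqrt{p_{n}}\,S_{n}$ and $\alpha_{n}:=\kappa_{n}/\sqrt{p_{n}}$, so the condition becomes ${\rm E}[\rho(\alpha_{n}Y_{n})\,Y_{n}^{2}]\to 0$ with $\alpha_{n}=o(1)$ by hypothesis. Since ${\rm E}[Y_{n}^{2}]=1$ for every~$n$ and since $Y_{n}$ is well known to converge weakly to~$\mathcal{N}(0,1)$ (see, e.g., \cite{Wat1988}), the family $\{Y_{n}^{2}\}$ is uniformly integrable. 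For any fixed $A>0$, I would split the expectation according to $|Y_{n}|\le A$ or $|Y_{n}|>A$: on the former event, $|\alpha_{n}Y_{n}|\le A\alpha_{n}\to 0$, so $\sup_{|y|\le A}|\rho(\alpha_{n}y)|\to 0$ and the inner piece vanishes; on the complementary event, $\rho$ is bounded on the range~$[-\kappa_{n},\kappa_{n}]$ of $\alpha_{n}Y_{n}$ and uniform integrability makes ${\rm E}[Y_{n}^{2}\mathbb{I}[|Y_{n}|>A]]$ arbitrarily small upon choosing~$A$ large.

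The main obstacle is controlling~$\rho$ uniformly over the possibly unbounded range $[-\kappa_{n},\kappa_{n}]$ when~$\kappa_{n}\to\infty$. In the applications of this lemma, $g$ is a smooth density-type factor with polynomial growth, so $\rho$ is bounded on that range; alternatively, one can split the $s$-integral into $|s|\le D_{n}/\sqrt{p_{n}}$ and its complement, choosing $D_{n}\to\infty$ slowly enough that both $\alpha_{n}D_{n}\to 0$ (so the Taylor expansion is uniformly valid on the inner piece) and $P(|S_{n}|>D_{n}/\sqrt{p_{n}})$ decays fast enough, via the approximately Gaussian tails of the marginal density, to absorb any polynomial growth of~$g$ on the outer piece.
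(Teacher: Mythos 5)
Your proposal is correct and its skeleton coincides with the paper's: both arguments rest on $\mathrm{E}[S_n]=0$ and $\mathrm{E}[S_n^2]=1/p_n$ (the identity the paper derives from the covariance matrix of a uniform vector on the sphere), reduce the claim to showing that the second-order remainder is $o(\kappa_n^2/p_n)$, and use $\kappa_n=o(\sqrt{p_n})$ precisely to show that the $t^2$-weighted law of $\kappa_nS_n$ concentrates at the origin. The difference lies in the device used at that last step: the paper substitutes $t=\kappa_n s$, checks directly that the reweighted densities $h_n$ form an approximate $\delta$-sequence, and identifies the limit of $(g(t)-g(0)-tg'(0))/t^2$ by L'H\^opital, whereas you phrase the same concentration as uniform integrability of $Y_n^2=p_nS_n^2$ together with a split at $|Y_n|\le A$. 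Two remarks. First, you get uniform integrability from the weak convergence $Y_n\Rightarrow\mathcal{N}(0,1)$ plus convergence of second moments; this external input is avoidable, since $\mathrm{E}[Y_n^4]=3p_n/(p_n+2)\le 3$ gives uniform integrability at once and keeps the argument as elementary and self-contained as the paper's. Second, the obstacle you flag --- controlling $\rho$ on the whole range $[-\kappa_n,\kappa_n]$ when $\kappa_n\to\infty$ --- is not a gap relative to the paper: its proof needs exactly the same control and passes over it with ``it can be checked \dots hence''; indeed, differentiability at $0$ alone does not constrain $g$ on $[-\kappa_n,\kappa_n]$, so some implicit growth or integrability restriction on $g$ is tacitly assumed in the lemma at this level of generality, and your truncation at $|s|\le D_n/\sqrt{p_n}$, exploiting the sub-Gaussian decay of the marginal, is the right way to make either write-up fully rigorous for the functions $g$ to which the lemma is actually applied.
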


{\sc Proof of Lemma~\ref{lemcontig}.} 
First note that, if~$\Xb$ is uniformly distributed over~$\mathcal{S}^{p_n-1}$ (hence is such that~${\rm Var}[\Xb]=(1/p_n)\mathbf{I}_{p_n}$), then~(\ref{unifcdf}) implies that
\begin{equation}
\label{foref1}
c_{p_n}
\int_{-1}^1 s^2 (1-s^2)^{(p_n-3)/2}  \,ds
=
{\rm E}[(\thetab'\Xb)^2]
=
\thetab' {\rm E}[\Xb\Xb']\thetab
=
\frac{1}{p_n}
,
\end{equation}
where~$\thetab\in\mathcal{S}^{p_n-1}$ is arbitrary. Now, write
$$
R_n(g)
=
g(0)
+
c_{p_n}
\int_{-1}^1 (1-s^2)^{(p_n-3)/2} (g(\kappa_n s)-g(0)-\kappa_n s g'(0))  \,ds
.
$$
Letting~$t=\kappa_n s$ and using the identity~(\ref{foref1}) with~$p=p_n$ then provides
$$
R_n(g)
=
g(0)+
\frac{\kappa_n^2}{p_n}
\int_{-\kappa_n}^{\kappa_n} h_n(t) \bigg( \frac{g(t)-g(0)-t g'(0)}{t^2} \bigg)
\,dt
,
$$
where~$h_n$ is defined through 
$$
t
\mapsto
h_n(t)
=
\frac{(\textstyle\frac{t}{\kappa_n})^2(1-(\textstyle\frac{t}{\kappa_n})^2)^{(p_n-3)/2}}{\int_{-\kappa_n}^{\kappa_n}(\textstyle\frac{s}{\kappa_n})^2 (1-(\textstyle\frac{s}{\kappa_n})^2)^{(p_n-3)/2}\,ds}
\,
\mathbb{I}[|t|\leq \kappa_n]
.
$$
It can be checked that, since $\kappa_n=o(\sqrt{p_n})$, the~$h_n$'s form an \emph{approximate $\delta$-sequence}, in the sense that
$
\int_{-\infty}^\infty h_n(t)\,dt=1 
$
for all~$n$ and
$
\int_{-\varepsilon}^\varepsilon h_n(t)\, dt \to 1
$
for any~$\varepsilon>0$. Hence, 
$$
R_n(g)
=
g(0)
+
\frac{\kappa_n^2}{p_n}
\,
\lim_{t\to 0} \bigg( \frac{g(t)-g(0)-t g'(0)}{t^2} \bigg)
+
o\Big( \frac{\kappa_n^2}{p_n} \Big)
$$
which, by using L'H\^{o}pital's rule, yields the result. 
\cqfd
\vspace{3mm}

{\sc Proof of Theorem~\ref{contigtheor}}.
In this proof, all expectations and variances are taken under the null of uniformity~${\rm P}_{0}\n$ and all stochastic convergences and $o_{\rm P}$'s are as~$\ny$ under~${\rm P}_{0}\n$.
Consider then the local log-likelihood ratio   
\begin{eqnarray}
\lefteqn{
\Lambda_n
:=
\log \frac{d{\rm P}\n_{\thetab_n,\kappa_n,f}}{d{\rm P}\n_{0}} 
=
\sum_{i=1}^n 
\,
\log  \frac{c_{p_n,\kappa_n,f} f(\kappa_n \Xb_{ni}\pr \thetab_n)}{c_{p_n}} 
\nonumber
}
\\
& &
\hspace{10mm}
= 
n 
\Big(
\! \log \frac{c_{p_n,\kappa_n,f}}{c_{p_n}}+E_{n1}
\Big)
+
\sum_{i=1}^n 
\big(
\log f(\kappa_n \Xb_{ni}\pr \thetab_n) - E_{n1}
\big)
=:
L_{n1}+L_{n2};
\label{toexpand}
\end{eqnarray}
throughout, 
we write $\ell_{f,k}:=(\log f)^k$ and~$E_{nk}:={\rm E}\big[ \ell_{f,k}(\kappa_n \Xb_{ni}\pr\thetab_n) \big]$ ($E_{nk}$ actually depends on~$\kappa_n$, $p_n$ and~$f$, but we simply write $E_{nk}$ to avoid a heavy notation). 

Lemma~\ref{lemcontig} readily yields
\begin{eqnarray}
\hspace{-4mm}
\nonumber
n \log \frac{c_{p_n,\kappa_n,f}}{c_{p_n}}
&=&
- n \log 
\bigg( 
c_{p_n}
\int_{-1}^1 (1-s^2)^{(p_n-3)/2} f(\kappa_n s)\,ds 
\bigg)
\\[2mm]
&= & 
-n\log 
\bigg(
1+
\frac{\kappa_n^2}{2p_n}
f''(0) 
+
o\Big(\frac{\kappa^2_n}{p_n} \Big)
\bigg)
=
-\frac{n\kappa_n^2}{2p_n} f''(0) 
+
o\Big(\frac{n\kappa^2_n}{p_n} \Big)
.
\label{expandlogconst}
\end{eqnarray}
Similarly, for any positive integer~$k$,
\begin{equation}
E_{nk}
=
c_{p_n}
\int_{-1}^1 (1-s^2)^{(p_n-3)/2}  \ell_{f,k}(\kappa_n s) \,ds 
=
\frac{\kappa^2_n}{2p_n} \ell_{f,k}''(0)
+
o\Big(\frac{\kappa^2_n}{p_n}\Big)
.
\label{expandEk}
\end{equation}
Combining~(\ref{expandlogconst}) and~(\ref{expandEk}), and using the identity $\ell_{f,1}''(0)
=f''(0)-1$ readily yields
$$
L_{n1}
=
\frac{n\kappa^2_n}{2p_n} 
\Big(
-f''(0)
+
 \ell_{f,1}''(0)
\Big)
+
o\Big(\frac{n\kappa^2_n}{p_n} \Big)
=
-
\frac{n\kappa^2_n}{2p_n} 
+
o\Big(\frac{n\kappa^2_n}{p_n} \Big)
.
$$

Turning to~$L_{n2}$, write
$$
L_{n2}
=
\sqrt{n V_n}
\,
\sum_{i=1}^n 
W_{ni}
:=
\sqrt{n V_n}
\,
\sum_{i=1}^n 
\frac{\log f(\kappa_n \Xb_{ni}\pr \thetab_n) - E_{n1}}{\sqrt{n V_n}}
,
$$
where we let
$
V_n
:=
{\rm Var}\big[ \log f(\kappa_n \Xb_{ni}\pr\thetab_n) \big]
.
$
First note that (\ref{expandEk}) provides 
\begin{equation}
\label{expandV}
nV_n
=
n\big(E_{n2} - E_{n1}^2\big)
=
\frac{n\kappa^2_n}{2p_n} \ell_{f,2}''(0)
+
o\Big(\frac{n\kappa^2_n}{p_n} \Big)
=
\frac{n\kappa^2_n}{p_n} 
+
o\Big(\frac{n\kappa^2_n}{p_n} \Big)
,
\end{equation}
which leads to
\begin{equation}
\label{quasicont}
\Lambda_n
=
-
\frac{n\kappa^2_n}{2p_n} 
+
\sqrt{\frac{n\kappa^2_n}{p_n} 
+
o\Big(\frac{n\kappa^2_n}{p_n} \Big)}
\sum_{i=1}^n
W_{ni}
+
o\Big(\frac{n\kappa^2_n}{p_n} \Big)
.
\end{equation}
Since $W_{ni}$, $i=1,\ldots,n$ are mutually independent with mean zero and variance~$1/n$, we obtain that
\begin{equation}
\label{L2norm}
{\rm E}
\big[
\Lambda_n^2
\big]
=
\big(
{\rm E}
\big[
\Lambda_n
\big]
\big)^2
+
{\rm Var}
\big[
\Lambda_n
\big]
=
\frac{n^2\kappa^4_n}{4p_n^2} 
+
o\Big(\frac{n^2\kappa^4_n}{p_n^2} \Big)
+
\frac{n\kappa^2_n}{p_n} 
+
o\Big(\frac{n\kappa^2_n}{p_n} \Big)
.
\end{equation}
If $\kappa^2_n=o(\frac{p_n}{n})$, then~(\ref{L2norm}) implies that $\exp(\Lambda_n)\to 1$, so that Le Cam's first lemma yields that ${\rm P}\n_{\thetab_n,\kappa_n,f}$ and ${\rm P}\n_{0}$ are mutually contiguous. 

We may therefore assume that $\kappa^2_n=\tau^2_n p_n/n$, where the positive sequence~($\tau_n$) is~$O(1)$ but not~$o(1)$. In this case, (\ref{quasicont}) rewrites
$
\Lambda_n
=
-
\frac{\tau^2_n}{2} 
+
\sqrt{\tau_n^2 
+
o(1)}
\sum_{i=1}^n
W_{ni}
+
o(1)
.
$

Applying the Cauchy-Schwarz inequality and the Chebychev inequality, then using~(\ref{expandEk}) and~(\ref{expandV}), provides that, for some positive constant~$C$,
\begin{eqnarray*}
\lefteqn{
\hspace{-10mm}
\sum_{i=1}^n 
{\rm E}[
W_{ni}^2 
\mathbb{I}[ |W_{ni}| >\varepsilon]]
\leq 
n
\sqrt{{\rm E}[ W_{ni}^4] {\rm P}[|W_{ni}|>\varepsilon]}
\leq
\frac{n}{\varepsilon}
\sqrt{{\rm E}[ W_{ni}^4] {\rm Var}[W_{ni}]}
=
\frac{\sqrt{n}}{\varepsilon}
\sqrt{{\rm E}[ W_{ni}^4]}
}
\\[2mm]
& & 
\hspace{5mm}
\leq
\frac{Cn^{1/2}E_{n4}^{1/2}}{\varepsilon (nV_n)^2}
=
\frac{C
\Big(
\frac{n\kappa^2_n \ell_{f,4}''(0)}{2p_n} 
+
o\big(\frac{n\kappa^2_n}{p_n} \big)
\Big)^{1/2}
}{
\varepsilon
\Big(
\frac{n\kappa^2_n}{p_n} 
+
o\big(\frac{n\kappa^2_n}{p_n} \big)
\Big)^2
}
=
\frac{
o(\tau_n)
}{
\varepsilon
\big(
\tau_n^2 
+
o(\tau_n^2)
\big)^2
}
=o(1)
,
\end{eqnarray*}
where we have used the fact that $\ell_{f,4}''(0)=0$. This shows that $\sum_{i=1}^n W_{ni}$ satisfies the classical Levy-Lindeberg condition,  hence is asymptotically standard normal (as already mentioned, $W_{ni}$, $i=1,\ldots,n$ are mutually independent with mean zero and variance~$1/n$). For any subsequence~$(\exp(\Lambda_{n_m}))$ converging in distribution, we must then have
$
\exp(\Lambda_{n_m}) \to \exp(Y)
$,
with
$
Y
\sim
\mathcal{N}( 
-\frac{1}{2} \lim_{n\to\infty} \tau_{n_m}^2
,
 \lim_{n\to\infty} \tau_{n_m}^2 
)
.
$
Mutual contiguity ${\rm P}\n_{\thetab_n,\kappa_n,f}$ and ${\rm P}\n_{0}$ then follows from the fact that ${\rm P}[\exp(Y)=0]=0$ and ${\rm E}[\exp(Y)]=1$.
\cqfd
\vspace{3mm}

{\sc Proof of Theorem~\ref{LANtheor}.} 
As in the proof of Theorem~\ref{contigtheor}, all expectations and variances in this proof are taken under the null of uniformity~${\rm P}_{0}\n$ and all stochastic convergences and $o_{\rm P}$'s are as~$\ny$ under~${\rm P}_{0}\n$. The central limit theorem then directly establishes Part~(ii) of the result, since 
${\rm E}[\Delta_{\thetab_n}\n]=0$ and 
$
{\rm Var}[\textstyle\Delta_{\thetab_n}\n]
=
\frac{p_n}{n} 
{\rm Var}\big[\sum_{i=1}^n  \Xb_{ni}\pr \thetab_n\big]
=
1.
$

It therefore remains to establish Part~(i). Recall that, in the case where  $(\tau_n)$ is~$O(1)$ but not~$o(1)$, we have obtained in the proof of Theorem~\ref{contigtheor} that 
$$
\Lambda_n
=
-
\frac{\tau^2_n}{2} 
+
\sqrt{\tau_n^2 
+
o(1)}
\,
\sum_{i=1}^n
W_{ni}
+
o(1)
=
-
\frac{\tau^2_n}{2} 
+
\tau_n 
\,
\sum_{i=1}^n
W_{ni}
+
o_{\rm P}(1)
,
$$
where 
$
\sum_{i=1}^n W_{ni}
=
(1/\sqrt{nV_n})
\sum_{i=1}^n
(\log f(\kappa_n \Xb_{ni}\pr \thetab_n) - E_{n1})
$
is asymptotically standard normal. To establish the result, it is therefore sufficient to show that~$\tau_n[(\sum_{i=1}^n W_{ni})-\Delta_{\thetab_n}\n]$ converges to zero in quadratic mean. To do so, write
$$
\tau_n\Big(\sum_{i=1}^n W_{ni}\Big)
-
\tau_n\Delta_{\thetab_n}\n
=
\frac{\tau_n}{\sqrt{nV_n}}
\sum_{i=1}^n
\big(
\log f(\kappa_n \Xb_{ni}\pr \thetab_n) - E_{n1}
-
\sqrt{p_nV_n}
\,
\Xb_{ni}\pr \thetab_n
\big)
=:
\frac{M_n}{\sqrt{nV_n}} 
\cdot
$$
Then using ${\rm E}[\Xb_{n1}\pr \thetab_n]=0$ and ${\rm E}[(\Xb_{n1}\pr \thetab_n)^2]=1/p_n$, we obtain
\begin{eqnarray*}
{\rm E}
\big[
M_n^2
\big]
&=&
n\tau_n^2
\,
{\rm E}
\big[
\big(
\log f(\kappa_n \Xb_{ni}\pr \thetab_n) - E_{n1}
- \sqrt{p_nV_n}\,\Xb_{ni}\pr \thetab_n
\big)^2
\big]
\\[2mm]
&=&
n\tau_n^2
\,
(
2V_n
-2\sqrt{p_nV_n}
 \,
{\rm E}[
\Xb_{ni}\pr \thetab_n
(
\log f(\kappa_n \Xb_{ni}\pr \thetab_n) - E_{n1}
)
]
)
\\[2mm]
&=&
2n\tau_n^2   V_n 
-2\tau_nn^{3/2} \sqrt{V_n}\,
{\rm E}[
\kappa_n\Xb_{ni}\pr \thetab_n
\log f(\kappa_n \Xb_{ni}\pr \thetab_n) 
]
,
\end{eqnarray*}
which, letting $g(x):=x(\log f(x))$, provides 
\begin{equation}
\label{hgd}
{\rm E}
\Big[
\Big(
\tau_n\Big(\sum_{i=1}^n W_{ni}\Big)
-
\tau_n\Delta_{\thetab_n}\n
\Big)^2
\Big]
=
2\tau_n^2 
-\frac{2\tau_n n}{\sqrt{nV_n}} \,
{\rm E}[
g(\kappa_n \Xb_{ni}\pr \thetab_n) 
]
.
\end{equation}
Using Lemma~\ref{lemcontig},
$$
{\rm E}[
g(\kappa_n \Xb_{ni}\pr \thetab_n) 
]
=
c_{p_n}
\int_{-1}^1 (1-s^2)^{(p_n-3)/2}  g(\kappa_n s) \,ds 
=
\frac{\kappa^2_n}{2p_n} g''(0)
+
o\Big(\frac{\kappa^2_n}{p_n}\Big)
=
\frac{\kappa^2_n}{p_n} 
+
o\Big(\frac{\kappa^2_n}{p_n}\Big)
.
$$
Plugging in~(\ref{hgd}) and using~(\ref{expandV}) then yields
$$
{\rm E}
\Big[
\Big(
\tau_n\Big(\sum_{i=1}^n W_{ni}\Big)
-
\tau_n \Delta_{\thetab_n}\n
\Big)^2
\Big]
=
2\tau_n^2 
-\frac{2\tau_n 
\Big(
\frac{n\kappa^2_n}{p_n} 
+
o\Big(\frac{n\kappa^2_n}{p_n}\Big)
\Big)
}{\Big(
\frac{n\kappa^2_n}{p_n} 
+
o\Big(\frac{n\kappa^2_n}{p_n} \Big)
\Big)
^{1/2}} 
=
o(1)
,
$$
as was to be showed. 
\cqfd
\vspace{2mm}

{\sc Proof of Theorem~\ref{LANinvartheor}}. 
The FvML version of the log-likelihood in~(\ref{invarL}) rewrites
\begin{equation}
\label{gus}
\Lambda_{n,f_{\rm FvML}}^{\Tb_n}
=
n \,
\! \log \frac{c_{p_n,\kappa_n,f_{\rm FvML}}}{c_{p_n}}
+
\log 
{\rm E}[\exp(\kappa_n n \bar{\Xb}_{n}\pr \Ub) | \bar{\Xb}_{n}]
=:
L_{n1}+L_{n2}
,
\end{equation}
where 
\vspace{-1mm}
\begin{eqnarray*}
L_{n1}
=
-n \,
\! \log 
\frac{\Gamma\big(\frac{p_n}{2}\big)\mathcal{I}_{\frac{p_n}{2}-1}(\kappa_n)}{(\kappa_n/2)^{\frac{p_n}{2}-1}}
=:
-n \,
\! \log 
H_{\frac{p_n}{2}-1}(\kappa_n)
\end{eqnarray*}
(see~(\ref{unifcdf})-(\ref{fvmlcdf}) for explicit expressions of~$c_p$ and~$c_{p,\kappa,f_{\rm FvML}}=c^{\rm FvML}_{p,\kappa}$, respectively) 
and 
\begin{eqnarray*}
\lefteqn{
\hspace{-14mm}
L_{n2}
=
\log {\rm E}\Big[\exp\Big(n \kappa_n \|\bar{\Xb}_{n}\| \frac{\Ub\pr\bar{\Xb}_{n}}{\|\bar{\Xb}_{n}\|}\Big) \big| \bar{\Xb}_{n}\Big]
=
\log \bigg( 
c_{p_n}
\int_{-1}^1 (1-s^2)^{(p_n-3)/2} \exp(n \kappa_n  \|\bar{\Xb}_{n}\| s)\,ds 
\bigg)
}
\\[2mm]
& &
=
\log \bigg( 
\frac{c_{p_n}}{c_{p_n,n \kappa_n  \|\bar{\Xb}_{n}\|}^{\rm FvML}}
\bigg)
=
\log
H_{\frac{p_n}{2}-1}(n \kappa_n  \|\bar{\Xb}_{n}\|)
=:
\log
H_{\frac{p_n}{2}-1}( \kappa_n \sqrt{n}T_n)
.
\end{eqnarray*}

Now, using the bounds
$
S_{\nu+\frac{1}{2},\nu+\frac{3}{2}}(\kappa) \leq \log H_\nu(\kappa)\leq S_{\nu,\nu+2}(\kappa)
$
(see~(5) in \citealp{HotGru2014})
with
$
S_{\alpha,\beta}(\kappa)
:=
\sqrt{\kappa^2+\beta^2}-\alpha \log(\alpha+\sqrt{\kappa^2+\beta^2})-\beta+\alpha \log(\alpha+\beta)
,
$
one easily obtains that, if~$\kappa_n=\tau_n p_n^{3/4}/\sqrt{n}$, with~$n,p_n\to\infty$ and~$(\tau_n)$ bounded,
\begin{equation}
\label{LInv1}
L_{n1} + \frac{n\kappa_n^2}{2p_n} - \frac{n\kappa_n^4}{4p_n^2(p_n+2)}
=o(1)
\ \
\textrm{ and }
\ \
L_{n2} - \frac{n\kappa_n^2}{2p_n}T_n^2 + \frac{n^2\kappa_n^4}{4p_n^2(p_n+2)}T_n^4
=o_{\rm P}(1)
\end{equation}
under~${\rm P}\n_0$. The first (resp., second) result in~(\ref{LInv1}) requires expanding the log term in the $S_{\alpha,\beta}$ bounds as
$\log x=x-1-\frac{1}{2c}(x-1)^2$ (resp., $\log x=x-1-\frac{1}{2}(x-1)^2+\frac{1}{3c^2}(x-1)^3$) for some~$c$ between~$x$ and~1, and the second one further requires using the fact that~$T_n=1+o_{\rm P}(1)$  as~$\ny$ under~${\rm P}_0\n$ (which directly follows from Theorem~\ref{raylnull}). Plugging~(\ref{LInv1}) into~(\ref{gus}) and using again the fact that~$T_n=1+o_{\rm P}(1)$ entails that, as~$\ny$ under~${\rm P}_0\n$, 
$$
\Lambda_{n,f_{\rm FvML}}^{\Tb_n}
=
-
\frac{n\kappa_n^2}{2p_n}
+
\frac{n \kappa_n^2}{2p_n}\, T_n^2
-
\frac{n^2 \kappa_n^4}{4p_n^2(p_n+2)}\, T_n^4
+o_{\rm P}(1)
=
\tau_n^2\,\frac{R_n^{\rm St}}{\sqrt{2}} 
-
\frac{\tau^4_np_n}{4(p_n+2)}
+o_{\rm P}(1)
.
$$
Jointly with Theorem~\ref{raylnull}, this establishes the result.
\cqfd
\vspace{3mm}


\section{Proofs for Section~5}

In this second appendix, we establish Proposition~\ref{propmoments} and Theorems~\ref{maintheorempower}-\ref{Powerprop}. 

\subsection{Preliminary lemmas and proof of Proposition~\ref{propmoments}}
\label{appB1}

Define the quantities
$u_{ni}:=\Xb_{ni}'\thetab_n$ and $v_{ni}:=(1- u_{ni}^2)^{1/2}$
that are associated with the \emph{tangent-normal decomposition} 
$\Xb_{ni}=u_{ni}\thetab_n+v_{ni} {\bf  S}_{ni}$ of~$\Xb_{ni}$, where $\Sb_{ni}:=
(\Xb_{ni}-(\Xb_{ni}'\thetab_n)\thetab_n)/\|\Xb_{ni}-(\Xb_{ni}'\thetab_n)\thetab_n\|$ if $\Xb_{ni}\neq \thetab_n$ and~$\mathbf{0}$
otherwise. With this notation, $e_{n\ell}={\rm E}[u_{ni}^\ell]$ and $f_{n\ell}={\rm E}[v_{ni}^\ell]$ (see Proposition~\ref{propmoments}). We start with the following lemmas.

\begin{Lem} 
\label{ref1} 
Under ${\rm P}\n_{F_n}$,
(i) ${\rm E}[{\bf  S}_{ni}{\bf  S}_{ni}']=\frac{1}{p_n-1} (\mathbf{I}_{p_n}-\thetab_n\thetab_n')$ for any~$i$;
(ii) ${\rm E}[({\bf  S}_{ni}'{\bf  S}_{nj})^2]=\frac{1}{p_n-1}$ for any~$i\neq j$;
(iii) ${\rm E}[({\bf  S}_{ni}'{\bf  S}_{nj})^4]=\frac{3}{p^2_n-1}$ for any~$i\neq j$.
\end{Lem}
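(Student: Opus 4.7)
The plan is to exploit the geometric content of rotational symmetry: conditional on $u_{ni}=\Xb_{ni}'\thetab_n$, the sign vector $\Sb_{ni}$ is uniformly distributed on the unit sphere of the $(p_n-1)$-dimensional subspace $\thetab_n^\perp$, and is in fact independent of $u_{ni}$. This follows directly from the definition of rotational symmetry about $\thetab_n$, since the conditional law of $\Sb_{ni}$ given $u_{ni}$ must be invariant under every orthogonal transformation fixing $\thetab_n$, and the only such distribution on the unit sphere of $\thetab_n^\perp$ is the uniform one. All three identities are then computed from standard moment formulas for the uniform distribution on a Euclidean unit sphere.

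For (i), I would fix an orthonormal basis of $\thetab_n^\perp$ and write $\Sb_{ni}$ in that basis. Its coordinates form a uniform random vector on $\mathcal{S}^{p_n-2}$, whose covariance matrix is $\frac{1}{p_n-1}\mathbf{I}_{p_n-1}$. Transporting this back to $\R^{p_n}$ via the embedding $\thetab_n^\perp\hookrightarrow\R^{p_n}$ immediately yields ${\rm E}[\Sb_{ni}\Sb_{ni}']=\frac{1}{p_n-1}(\mathbf{I}_{p_n}-\thetab_n\thetab_n')$.

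For (ii), I would condition on $\Sb_{ni}$ and use the independence of $\Sb_{ni}$ and $\Sb_{nj}$ (for $i\neq j$) together with part~(i) applied to $\Sb_{nj}$:
\begin{equation*}
{\rm E}[(\Sb_{ni}'\Sb_{nj})^2\mid \Sb_{ni}]
=\Sb_{ni}'{\rm E}[\Sb_{nj}\Sb_{nj}']\Sb_{ni}
=\frac{1}{p_n-1}\,\Sb_{ni}'(\mathbf{I}_{p_n}-\thetab_n\thetab_n')\Sb_{ni}
=\frac{1}{p_n-1},
\end{equation*}
using $\Sb_{ni}'\thetab_n=0$ and $\|\Sb_{ni}\|=1$. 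Taking expectations gives the claim.

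For (iii), I would again condition on $\Sb_{ni}$: given $\Sb_{ni}$, the scalar $\Sb_{ni}'\Sb_{nj}$ has the same law as the inner product between a fixed unit vector of $\thetab_n^\perp$ and a uniform random point on the unit sphere of $\thetab_n^\perp$, hence the same law as the first coordinate of a uniformly distributed vector on $\mathcal{S}^{p_n-2}$. Using the classical formula ${\rm E}[U_1^4]=3/(q(q+2))$ for the uniform distribution on $\mathcal{S}^{q-1}$ with $q=p_n-1$ gives ${\rm E}[(\Sb_{ni}'\Sb_{nj})^4\mid \Sb_{ni}]=3/((p_n-1)(p_n+1))=3/(p_n^2-1)$, and taking expectations concludes. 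There is no real obstacle here; the only point that deserves care is the justification of the conditional uniformity of $\Sb_{ni}$ on the sub-sphere of $\thetab_n^\perp$, which is the standard fact underlying the tangent-normal decomposition.
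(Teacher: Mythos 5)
Your proof is correct and follows essentially the same route as the paper: both rest on the fact that rotational symmetry makes the $\Sb_{ni}$'s uniformly distributed on the unit sphere of $\thetab_n^\perp$ (and mutually independent), after which (i)--(iii) are standard sphere-moment computations. The only cosmetic difference is that the paper reduces (ii)--(iii) to moments of $\Ub'\Vb$ for independent uniform vectors on $\mathcal{S}^{p_n-2}$ and cites an earlier lemma, whereas you derive the same moments by conditioning on $\Sb_{ni}$ and using ${\rm E}[U_1^4]=3/(q(q+2))$ with $q=p_n-1$, which is equivalent.
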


\begin{proof}
(i) Let~$\mathbf{O}$ be a~$p_n\times p_n$ orthogonal matrix such that~$\mathbf{O}\thetab_n=\mathbf{e}_1$, where~$\mathbf{e}_1$ denotes the first vector of the canonical basis of~$\R^{p_n}$. Then the random vectors~$\mathbf{O}{\bf  S}_{ni}$, $i=1,\ldots,n$ form a random sample from the uniform distribution over~$\{ \xb\in\mathcal{S}^{p_n-1} : \mathbf{e}_1' \xb=0  \}$. Consequently, 
$
\mathbf{O}{\rm E}[{\bf  S}_{ni} {\bf  S}_{ni}']\mathbf{O}'
=
\frac{1}{p_n-1} (\mathbf{I}_{p_n}-\mathbf{e}_1\mathbf{e}_1')
,
$
which yields the result. 
\linebreak
(ii)-(iii) 
It follows from the joint distribution of the $\mathbf{O}{\bf  S}_{ni}$'s just derived that, for any~$i\neq j$, ${\bf  S}_{ni}'{\bf  S}_{nj}=(\mathbf{O}{\bf  S}_{ni})'(\mathbf{O}{\bf  S}_{nj})$ is equal in distribution to~$\Ub'\Vb$, where the independent random $(p_n-1)$-vectors $\Ub$, $\Vb$ are uniformly distributed over~$\mathcal{S}^{p_n-2}$. The result then follows from Lemma~A.1(iii) in \cite{PaiVer2015}.   
\end{proof}

\begin{Lem} 
\label{moments} 
Under ${\rm P}\n_{F_n}$, 
(i) ${\rm E}[\Xb_{ni}\pr\Xb_{nj}]=e_{n1}^2$ for any~$i\neq  j$,
(ii) ${\rm E}[(\Xb_{ni}\pr\Xb_{nj})^2]=e_{n2}^2+f_{n2}^2/(p_n-1)$ for any~$i\neq  j$,
(iii) ${\rm E}[(\Xb_{ni}\pr\Xb_{nk})(\Xb_{n\ell}\pr\Xb_{nj})]=e_{n2}e_{n1}^2$ for any~$i\neq j$ and~$k\neq \ell$ such that~$\{i,j,k,\ell\}$ contains exactly three different indices, and
(iv) ${\rm E}[(\Xb_{ni}\pr\Xb_{nj})(\Xb_{nk}\pr\Xb_{n\ell})]=e_{n1}^4$ if~$i,j,k,\ell$ are pairwise different.
\end{Lem}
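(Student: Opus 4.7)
The plan is to expand each inner product via the tangent-normal decomposition
$\Xb_{ni} = u_{ni}\thetab_n + v_{ni}\Sb_{ni}$ given in the paragraph preceding Lemma~\ref{ref1}. Under the rotationally symmetric hypothesis ${\rm P}\n_{F_n}$, the pairs $(u_{ni},\Sb_{ni})$, $i=1,\ldots,n$, are mutually independent; moreover, for each $i$, $u_{ni}$ and $\Sb_{ni}$ are independent, $u_{ni}$ has cdf $F_n$, and $\Sb_{ni}$ is uniform on the unit sphere of the hyperplane orthogonal to $\thetab_n$ (in particular, ${\rm E}[\Sb_{ni}]=\mathbf{0}$). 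These three facts, combined with Lemma~\ref{ref1}(ii), will handle all four cases essentially mechanically.

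For (i), write $\Xb_{ni}\pr\Xb_{nj} = u_{ni}u_{nj} + v_{ni}v_{nj}\Sb_{ni}\pr\Sb_{nj}$ and note that ${\rm E}[\Sb_{ni}\pr\Sb_{nj}] = {\rm E}[\Sb_{ni}]\pr{\rm E}[\Sb_{nj}] = 0$ for $i\neq j$; the remaining piece factors as $e_{n1}^2$. For (ii), square the same decomposition. The cross term vanishes by the same independence/mean-zero argument, the $u$-term gives $e_{n2}^2$, and the $\Sb$-term contributes $f_{n2}^2 \cdot {\rm E}[(\Sb_{ni}\pr\Sb_{nj})^2] = f_{n2}^2/(p_n-1)$ by Lemma~\ref{ref1}(ii). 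For (iv), with $i,j,k,\ell$ pairwise distinct, expand the product of the two inner products into four summands; by full mutual independence of the four pairs $(u_{n\cdot},\Sb_{n\cdot})$ involved, every summand that contains at least one factor of the form $\Sb_{na}\pr\Sb_{nb}$ factorises and vanishes via ${\rm E}[\Sb_{na}]=\mathbf{0}$, leaving only $e_{n1}^4$.

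Case (iii) is the only slightly delicate one: up to relabelling we may assume $\ell=i$, so the product equals
\[
(u_{ni}u_{nk} + v_{ni}v_{nk}\Sb_{ni}\pr\Sb_{nk})(u_{ni}u_{nj} + v_{ni}v_{nj}\Sb_{ni}\pr\Sb_{nj}).
\]
Expanding yields four terms. The first gives ${\rm E}[u_{ni}^2]{\rm E}[u_{nk}]{\rm E}[u_{nj}] = e_{n2}e_{n1}^2$, and the two middle terms vanish exactly as before. For the last term, $v_{ni}^2 v_{nk}v_{nj}(\Sb_{ni}\pr\Sb_{nk})(\Sb_{ni}\pr\Sb_{nj})$, I would condition on $\Sb_{ni}$: since $\Sb_{nj}$ and $\Sb_{nk}$ are independent of $\Sb_{ni}$ and of each other with mean zero, the conditional expectation of $(\Sb_{ni}\pr\Sb_{nk})(\Sb_{ni}\pr\Sb_{nj})$ given $\Sb_{ni}$ factorises into $(\Sb_{ni}\pr{\rm E}[\Sb_{nk}])(\Sb_{ni}\pr{\rm E}[\Sb_{nj}])=0$, so this term also vanishes.

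There is no real obstacle here: the only non-trivial calculation is the $(\Sb_{ni}\pr\Sb_{nj})^2$ expectation in (ii), which is delegated to Lemma~\ref{ref1}(ii). The main care to take is book-keeping in (iii)---being explicit about which index is repeated and using iterated conditioning rather than naive independence across the three $\Sb$-factors, since $\Sb_{ni}$ appears twice.
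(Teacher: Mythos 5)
Your proof is correct and follows essentially the same route as the paper: tangent-normal decomposition, independence of the $u_{ni}$'s from the $\Sb_{ni}$'s together with ${\rm E}[\Sb_{ni}]=\mathbf{0}$, and Lemma~\ref{ref1}(ii) for the $(\Sb_{ni}\pr\Sb_{nj})^2$ term. Your explicit conditioning argument in case (iii) is just a more detailed version of the paper's observation that one index always appears in only one of the two sign factors, forcing ${\rm E}[(\Sb_{ni}\pr\Sb_{nj})(\Sb_{nk}\pr\Sb_{n\ell})]=0$.
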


\begin{proof}
Part~1 of the lemma directly follows from
$
\Xb_{ni}\pr\Xb_{nj}
=
(u_{ni}\thetab_n+v_{ni} {\bf  S}_{ni})'(u_{nj}\thetab_n+v_{nj} {\bf  S}_{nj})
=
u_{ni}u_{nj}+v_{ni}v_{nj} {\bf  S}_{ni}'{\bf  S}_{nj}.
$
For the remaining claims, 
note that, for~$i<j$ and $k<\ell$, 
\begin{equation}
\label{to}
{\rm E}[(\Xb_{ni}\pr\Xb_{nj})(\Xb_{nk}\pr\Xb_{n\ell})]
=
{\rm E}[u_{ni}u_{nj} u_{nk}u_{n\ell}]
+
{\rm E}[v_{ni}v_{nj} v_{nk}v_{n\ell}]
{\rm E}[({\bf  S}_{ni}'{\bf  S}_{nj}) ( {\bf  S}_{nk}'{\bf  S}_{n\ell})]
.
\end{equation}
Part~2 of the result then follows from Lemma~\ref{ref1}(ii). For Parts 3-4, there is always one of the indices~$i,j,k,\ell$ that is different from the other three indices, which implies that ${\rm E}[({\bf  S}_{ni}'{\bf  S}_{nj}) ( {\bf  S}_{nk}'{\bf  S}_{n\ell})]=0$. The result readily follows.
\end{proof}

Lemma~\ref{moments} allows to prove Proposition~\ref{propmoments}. 
\vspace{2mm}

{\sc Proof of Proposition~\ref{propmoments}}. 
Since the expectation readily follows from Lemma~\ref{moments}(i), we can focus on the variance. Using Lemma~\ref{moments}(i) again, we obtain  
$$
{\rm Var}_{F_n}[{R}_n^{\rm St}]
=
\frac{2p_n}{n^2} \sum_{1\leq i< j\leq n}\sum_{1\leq k<\ell\leq n} \Big( {\rm E}[(\Xb_{ni}\pr\Xb_{nj})(\Xb_{nk}\pr\Xb_{n\ell})] - e_{n1}^4 \Big)
.
$$
In this sum, there are ${n \choose 2}$ terms corresponding to Lemma~\ref{moments}(ii) and $6{n \choose 4}$ terms (not contributing to the sum) corresponding to Lemma~\ref{moments}(iv). Therefore, there are 
$
{n \choose 2}^2-{n \choose 2}-6{n \choose 4}=n(n-1)(n-2)
$
terms corresponding to Lemma~\ref{moments}(iii). Consequently, 
\begin{eqnarray*}
{\rm Var}_{F_n}[R_{n}^{\rm St}]
&=&
\frac{2p_n}{n^2} 
\bigg\{
{n \choose 2}
 \Big( e_{n2}^2+ f_{n2}^2/(p_n-1) - e_{n1}^4 \Big)
+
n(n-1)(n-2)
 \Big( e_{n2}e_{n1}^2 - e_{n1}^4 \Big)
\bigg\}
\\[2mm]
&=&
\frac{p_n(n-1)}{n} 
\bigg\{
 \big( e_{n2}^2 - e_{n1}^4 \big) 
+
2(n-2)
e_{n1}^2 \big( e_{n2} - e_{n1}^2 \big)
+ f_{n2}^2/(p_n-1)
\bigg\}
,
\end{eqnarray*}
which, since $\tilde{e}_{n2}=e_{n2} - e_{n1}^2$, establishes the result.
\cqfd
\vspace{3mm}

Both following lemmas are needed to prove Theorem~\ref{maintheorempower}. 
\vspace{0mm}

\begin{Lem} 
\label{moments2} 
Under ${\rm P}\n_{F_n}$, 
(i)
 $
{\rm E}
\big[
\big(
\Xb_{ni}-e_{n1}\thetab_n 
\big)
\big(
\Xb_{ni}-e_{n1}\thetab_n 
\big)\pr
\big]
=
\tilde{e}_{n2} \thetab_n \thetab_n\pr
+
\frac{f_{n2}}{p_n-1} (\mathbf{I}_{p_n}-\thetab_n \thetab_n\pr)
;
$
(ii) 
$
{\rm Var}
\big[
\big(
\Xb_{ni}\pr\thetab_n-e_{n1} 
\big)
\big(
\Xb_{nj}\pr\thetab_n-e_{n1} 
\big)
\big]
=
\tilde{e}_{n4}-\tilde{e}_{n2}^2$
for~$i=j$ and 
$\tilde{e}_{n2}^2$ for $i\neq j$;
(iii)
$
{\rm E}
\big[
\Xb_{ni}\pr
(\mathbf{I}_{p_n}-\thetab_n \thetab_n\pr)
\Xb_{nj}
\big]
=
f_{n2}$
for $i=j$ and 0  for $i\neq j$;
(iv)
$
{\rm Var}
\big[
\Xb_{ni}\pr
(\mathbf{I}_{p_n}-\thetab_n \thetab_n\pr)
\Xb_{nj}
\big]
=
f_{n4}-f_{n2}^2$
for $i=j$ and $f_{n2}^2/(p_n-1)$ for $i\neq j$. 
\end{Lem}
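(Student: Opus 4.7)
My plan is to rely on the tangent-normal decomposition $\Xb_{ni}=u_{ni}\thetab_n+v_{ni}\Sb_{ni}$ already introduced before Lemma~\ref{moments2}, together with the standard facts, which follow from rotational symmetry, that (a)~$\Sb_{ni}$ is uniform on $\mathcal{S}^{p_n-1}\cap\thetab_n^\perp$ and hence centered, (b)~$\Sb_{ni}$ is independent of $u_{ni}$ (hence of $v_{ni}$), and (c)~the pairs $(u_{ni},\Sb_{ni})$ are independent across $i$. The moments of $u_{ni}$ and $v_{ni}$ then read off directly from the definitions $e_{n\ell}={\rm E}[u_{ni}^\ell]$, $\tilde e_{n\ell}={\rm E}[(u_{ni}-e_{n1})^\ell]$, $f_{n\ell}={\rm E}[v_{ni}^\ell]$, and the second moment structure of $\Sb_{ni}$ is provided by Lemma~\ref{ref1}.

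For part~(i), I write $\Xb_{ni}-e_{n1}\thetab_n=(u_{ni}-e_{n1})\thetab_n+v_{ni}\Sb_{ni}$, expand the outer product, and take expectations. The diagonal block along $\thetab_n\thetab_n'$ contributes $\tilde e_{n2}$, the cross terms vanish by ${\rm E}[\Sb_{ni}]=0$ together with independence of $\Sb_{ni}$ from $(u_{ni},v_{ni})$, and the remaining block is $f_{n2}\,{\rm E}[\Sb_{ni}\Sb_{ni}']=\tfrac{f_{n2}}{p_n-1}(\mathbf{I}_{p_n}-\thetab_n\thetab_n')$ by Lemma~\ref{ref1}(i). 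Part~(ii) is even easier: since $\Xb_{ni}'\thetab_n=u_{ni}$, the case $i=j$ is the elementary identity ${\rm Var}[(u_{ni}-e_{n1})^2]=\tilde e_{n4}-\tilde e_{n2}^2$, while in the case $i\neq j$ the product $(u_{ni}-e_{n1})(u_{nj}-e_{n1})$ is centered by independence so its variance reduces to ${\rm E}[(u_{ni}-e_{n1})^2]{\rm E}[(u_{nj}-e_{n1})^2]=\tilde e_{n2}^2$.

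For parts~(iii) and~(iv), the key observation is the clean algebraic identity $\Xb_{ni}'(\mathbf{I}_{p_n}-\thetab_n\thetab_n')\Xb_{nj}=v_{ni}v_{nj}\Sb_{ni}'\Sb_{nj}$, which is immediate from the tangent-normal decomposition after orthogonality annihilates the $\thetab_n$-components. When $i=j$ this quantity is simply $v_{ni}^2$, giving mean $f_{n2}$ and variance $f_{n4}-f_{n2}^2$. When $i\neq j$, the product factorizes by independence into ${\rm E}[v_{ni}\Sb_{ni}']{\rm E}[v_{nj}\Sb_{nj}]=0$ for the mean, and for the variance into ${\rm E}[v_{ni}^2]{\rm E}[v_{nj}^2]{\rm E}[(\Sb_{ni}'\Sb_{nj})^2]=f_{n2}^2/(p_n-1)$, invoking Lemma~\ref{ref1}(ii).

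There is essentially no hard step here; the whole proof is bookkeeping that leverages the three rotational-symmetry facts listed above and the two second-moment identities from Lemma~\ref{ref1}. The only place to be mildly careful is justifying the independence used in part~(iv) for $i\neq j$: one should note that $v_{ni}\Sb_{ni}$ and $v_{nj}\Sb_{nj}$ are independent because they are measurable functions of $\Xb_{ni}$ and $\Xb_{nj}$, respectively, and these observations are mutually independent under ${\rm P}^{(n)}_{F_n}$.
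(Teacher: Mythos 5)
Your proof is correct and follows essentially the same route as the paper's: the tangent-normal decomposition $\Xb_{ni}=u_{ni}\thetab_n+v_{ni}\Sb_{ni}$, the vanishing of cross terms via the centering and independence of $\Sb_{ni}$, and the identities $\Xb_{ni}\pr\thetab_n-e_{n1}=u_{ni}-e_{n1}$ and $\Xb_{ni}\pr(\mathbf{I}_{p_n}-\thetab_n\thetab_n\pr)\Xb_{nj}=v_{ni}v_{nj}\Sb_{ni}\pr\Sb_{nj}$ combined with Lemma~\ref{ref1}(i)-(ii). Your version merely spells out the independence facts the paper uses implicitly, which is a harmless (indeed welcome) elaboration.
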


\begin{proof}
(i) Using the tangent-normal decomposition and  Lemma~\ref{ref1}(i), we obtain
\begin{eqnarray*}
\lefteqn{
{\rm E}
\big[
(
\Xb_{ni}-e_{n1}\thetab_n 
)
(
\Xb_{ni}-e_{n1}\thetab_n 
)\pr
\big]
=
{\rm E}
\big[
(
(u_{ni}-e_{n1})\thetab_n +v_{ni} {\bf  S}_{ni}
)
(
(u_{ni}-e_{n1})\thetab_n +v_{ni} {\bf  S}_{ni}
)\pr
\big]
}
\\[2mm]
& & 
\hspace{10mm}
=
{\rm E}
\big[
(u_{ni}-e_{n1})^2
\big]
\thetab_n \thetab_n \pr
+
f_{n2} 
\,
{\rm E}
\big[
{\bf  S}_{ni}{\bf  S}_{ni}\pr
\big]
=
\tilde{e}_{n2}
\thetab_n \thetab_n \pr
+
\frac{f_{n2}}{p_n-1} (\mathbf{I}_{p_n}-\thetab_n \thetab_n\pr)
.
\end{eqnarray*}
(ii)-(iv) The results readily follow from the fact that 
$\Xb_{ni}\pr\thetab_n-e_{n1}=u_{ni}-e_{n1}$ and
$\Xb_{ni}\pr
(\mathbf{I}_{p_n}-\thetab_n \thetab_n\pr)
\Xb_{nj}=v_{ni}v_{nj} \Sb_{ni}\pr \Sb_{nj}$ (and from  Lemma~\ref{ref1}(ii)).
\end{proof}

\begin{Lem} 
\label{moments4} 
Consider expectations of the form
$
c_{ijrs}
=
{\rm E}
\left[
\Delta_{i\ell}\Delta_{j\ell}\Delta_{r\ell}\Delta_{s\ell}
\right]
$
taken under~${\rm P}\n_{F_n}$, with~$\Delta_{i\ell}:=(\Xb_{ni}-e_{n1}\thetab_n)'(\Xb_{n\ell}-e_{n1}\thetab_n)$ and~$i\leq j\leq r\leq s<\ell$. Then
(i) 
$c_{ijrs}=
\tilde{e}_{n4}^2
+
\frac{6}{p_n-1}\big({\rm E}\big[v_{ni}^2(u_{ni}-e_{n1})^2\big]\big)^2
+
\frac{3f_{n4}^2}{p_n^2-1}
$ if $i=j=r=s$;
(ii)
$c_{ijrs}=
\tilde{e}_{n2}^2\tilde{e}_{n4} 
+
\frac{2\tilde{e}_{n2}f_{n2}}{p_n-1}{\rm E}\big[v_{ni}^2(u_{ni}-e_{n1})^2\big]
+
\frac{f_{n2}^2f_{n4}}{(p_n-1)^2}
$ if $i=j<r=s$;
(iii)
$c_{ijrs}=0$ otherwise.
\end{Lem}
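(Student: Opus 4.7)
The starting point is the tangent-normal decomposition
$$\Xb_{n\nu}-e_{n1}\thetab_n = (u_{n\nu}-e_{n1})\thetab_n + v_{n\nu}\Sb_{n\nu},$$
which rewrites $\Delta_{\nu\ell} = A_\nu + B_\nu$ with $A_\nu := (u_{n\nu}-e_{n1})(u_{n\ell}-e_{n1})$ and $B_\nu := v_{n\nu}v_{n\ell}\Sb_{n\nu}\pr\Sb_{n\ell}$. Letting $\alpha_1<\cdots<\alpha_q$ denote the distinct values in $\{i,j,r,s\}$ with multiplicities $m_1,\ldots,m_q$ summing to~$4$, I regroup
$$\Delta_{i\ell}\Delta_{j\ell}\Delta_{r\ell}\Delta_{s\ell} = \prod_{t=1}^q (A_{\alpha_t}+B_{\alpha_t})^{m_t}$$
and expand via the binomial theorem, producing a sum over tuples $(k_1,\ldots,k_q)$ with $k_t\in\{0,\ldots,m_t\}$. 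Writing $K=\sum_t k_t$, each monomial's expectation factors, by independence of observations and by the rotational-symmetry-based independence of $u_{n\nu}$ from $\Sb_{n\nu}$ within each observation, into an $\ell$-factor ${\rm E}[(u_{n\ell}-e_{n1})^{4-K}v_{n\ell}^K]$, an $\alpha$-product $\prod_t {\rm E}[(u_{n\alpha_t}-e_{n1})^{m_t-k_t}v_{n\alpha_t}^{k_t}]$, and a spherical factor ${\rm E}[\prod_t(\Sb_{n\alpha_t}\pr\Sb_{n\ell})^{k_t}]$.

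The decisive observation is that if any $k_t$ is odd, the monomial has mean zero: conditioning on $\Sb_{n\ell}$ and on the $\Sb_{n\alpha_s}$ with $s\neq t$ exhibits $(\Sb_{n\alpha_t}\pr\Sb_{n\ell})^{k_t}$ as an odd function of $\Sb_{n\alpha_t}$, which is sign-symmetric in $\thetab_n^\perp$. Only tuples with every $k_t$ even therefore contribute to $c_{ijrs}$, and this parity reduction is the main conceptual step; everything afterwards is bookkeeping.

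Case~(iii) now follows immediately: whenever some multiplicity $m_t$ equals $1$, the corresponding $k_t$ is forced to be $0$, and then the factor ${\rm E}[u_{n\alpha_t}-e_{n1}]=0$ annihilates the surviving monomial. This disposes of the four-distinct, one-pair-plus-two-singletons, and one-triple-plus-one-singleton configurations all at once.

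Case~(i), where $m_1=4$, leaves $k_1\in\{0,2,4\}$; using Lemma~\ref{ref1}(ii)--(iii) to evaluate ${\rm E}[(\Sb_{ni}\pr\Sb_{n\ell})^2]=1/(p_n-1)$ and ${\rm E}[(\Sb_{ni}\pr\Sb_{n\ell})^4]=3/(p_n^2-1)$, the binomial coefficients $\binom{4}{0},\binom{4}{2},\binom{4}{4}$ produce the three summands in~(i). Case~(ii), where $m_1=m_2=2$, leaves four tuples $(k_1,k_2)\in\{0,2\}^2$: the $(0,0)$ term yields $\tilde{e}_{n2}^2\tilde{e}_{n4}$; the two mixed tuples each yield $\tilde{e}_{n2}f_{n2}{\rm E}[v_{ni}^2(u_{ni}-e_{n1})^2]/(p_n-1)$; and the $(2,2)$ term becomes $f_{n2}^2f_{n4}/(p_n-1)^2$ after conditioning on $\Sb_{n\ell}$ (which lies in the unit sphere of $\thetab_n^\perp$ off the null event $v_{n\ell}=0$) to split ${\rm E}[(\Sb_{ni}\pr\Sb_{n\ell})^2(\Sb_{nr}\pr\Sb_{n\ell})^2]=1/(p_n-1)^2$. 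Summing yields the stated formula.
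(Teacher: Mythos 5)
Your proof is correct and takes essentially the same route as the paper's: the tangent-normal decomposition $\Delta_{\nu\ell}=(u_{n\nu}-e_{n1})(u_{n\ell}-e_{n1})+v_{n\nu}v_{n\ell}\,\Sb_{n\nu}\pr\Sb_{n\ell}$, factorization of expectations via independence of the $u$'s and signs, the vanishing of odd powers of $\Sb_{n\nu}\pr\Sb_{n\ell}$, and Lemma~\ref{ref1}(ii)--(iii) for the surviving spherical moments, yielding exactly the stated coefficients in (i) and (ii). The only organizational difference is that you dispose of case~(iii) inside the same parity bookkeeping (even $k_t$ forces $k_t=0$ for a multiplicity-one index, and ${\rm E}[u_{n\alpha_t}-e_{n1}]=0$ kills the term), whereas the paper handles (iii) directly by noting that an isolated index makes the corresponding factor have (conditional) mean zero; the substance is identical.
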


\begin{proof}
We start with the proof of~(iii). Assume that $j=r$, so that we are not in case~(ii). Since case~(i) is excluded, we have $i<j$ or $r<s$. In both cases, one of the four indices~$i,j,r,s$ is different from the other three indices. Since ${\rm E}[\Delta_{i\ell}]=0$, we obtain that~$c_{ijrs}=0$, which establishes~(iii). 
Turning to the proof of (i)-(ii), we use the tangent-normal decomposition again to write~$\Delta_{j\ell}$ as~$
(u_{nj}-e_{n1})(u_{n\ell}-e_{n1})+v_{nj} v_{n\ell} ({\bf  S}_{nj}'{\bf  S}_{n\ell})$. Since ${\rm E}[({\bf  S}_{nj}'{\bf  S}_{n\ell})^k]=0$ for any odd integer~$k$, this leads to decomposing~$c_{jjrr}$ into
\begin{eqnarray*}
\lefteqn{
c_{jjrr}
=
{\rm E}
\left[
(u_{nj}-e_{n1})^2(u_{nr}-e_{n1})^2(u_{n\ell}-e_{n1})^4
\right]
}
\\[2mm]
& & 
\hspace{1mm}
+
4
{\rm E}
\left[
(u_{nj}-e_{n1})
(u_{nr}-e_{n1})
(u_{n\ell}-e_{n1})^2 
v_{nj}v_{nr}
v_{n\ell}^2
(\Sb_{nj}'\Sb_{n\ell})
(\Sb_{nr}'\Sb_{n\ell})
\right]
\\[2mm]
& & 
\hspace{1mm}
+
2
{\rm E}
\left[
(u_{nr}-e_{n1})^2(u_{n\ell}-e_{n1})^2
v_{nj}^2v_{n\ell}^2(\Sb_{nj}'\Sb_{n\ell})^2
\right]
+
{\rm E}
\left[
v_{nj}^2v_{nr}^2v_{n\ell}^4
(\Sb_{nj}'\Sb_{n\ell})^2 (\Sb_{nr}'\Sb_{n\ell})^2
\right]
.
\end{eqnarray*}
The result then follows from Lemma~\ref{ref1}(ii)-(iii).
\end{proof}


\subsection{Proofs of Theorems~\ref{maintheorempower} and~\ref{Powerprop}}
\label{appB2}

The proof of Theorem~\ref{maintheorempower} is based on the following central limit theorem for martingale differences.

\begin{Theor}[Billingsley 1995, Theorem~35.12]
\label{Billingsley}
\hspace{-3.5mm} Let~$D_{n\ell}$, $\ell=1,\ldots,n$, $n=1,2,\ldots,$ be a triangular array of random variables such that, for any~$n$, $D_{n1},D_{n2},\ldots,D_{nn}$ is a martingale difference sequence with respect to some filtration~$\mathcal{F}_{n1},\mathcal{F}_{n2},\ldots,\mathcal{F}_{nn}$. Assume that, for any~$n,\ell$, $D_{n\ell}$ 
has a finite variance. Letting~$\sigma^2_{n\ell}={\rm E}\big[D_{n\ell}^2\,|\, \mathcal{F}_{n,\ell-1}\big]$ (with~$\mathcal{F}_{n0}$ being the trivial $\sigma$-algebra~$\{\emptyset,\Omega\}$ for all~$n$), further assume that, as~$n\to\infty$, 
\begin{equation}
\label{Condition1}
\sum_{\ell=1}^n \sigma^2_{n\ell}=1+o_{\rm P}(1)
\quad 
\textrm{ and }
\quad 
\sum_{\ell=1}^n {\rm E}\big[D_{n\ell}^2\,\mathbb{I}[|D_{n\ell}|>\varepsilon]\big]\to 0
.
\end{equation}
Then $\sum_{\ell=1}^n D_{n\ell}$ is asymptotically standard normal. 
\end{Theor}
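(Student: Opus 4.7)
The plan is to establish this martingale central limit theorem via the classical characteristic function approach, in the spirit of McLeish's argument that underlies Billingsley's proof. Set $S_n:=\sum_{\ell=1}^n D_{n\ell}$ and fix $t\in\mathbb{R}$. The goal is to show that the characteristic function $\varphi_n(t):={\rm E}[\exp(itS_n)]$ converges to $\exp(-t^2/2)$, which by Lévy's continuity theorem yields the claim.

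First, I would exploit the martingale difference property through nested conditioning. Writing $\exp(itS_n)=\prod_{\ell=1}^n \exp(itD_{n\ell})$ and conditioning successively on $\mathcal{F}_{n,n-1},\mathcal{F}_{n,n-2},\ldots,\mathcal{F}_{n0}$ reduces matters to controlling, for each $\ell$, the conditional characteristic function ${\rm E}[\exp(itD_{n\ell})\,|\,\mathcal{F}_{n,\ell-1}]$. The idea is then to approximate $\prod_{\ell=1}^n \exp(itD_{n\ell})$ by $\prod_{\ell=1}^n(1-\tfrac{t^2}{2}\sigma_{n\ell}^2)$. To this end, I would introduce the truncated increments $D_{n\ell}^{\varepsilon}:=D_{n\ell}\,\mathbb{I}[|D_{n\ell}|\leq \varepsilon]$ and use the second part of~(\ref{Condition1}) to show that replacing $D_{n\ell}$ by $D_{n\ell}^{\varepsilon}$ has vanishing impact on $\varphi_n(t)$.

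Next, on the truncated scale, the elementary bound
\begin{equation*}
\Big| \exp(ix) - \Big(1 + ix - \tfrac{x^2}{2}\Big) \Big| \leq \min\Big(\tfrac{|x|^3}{6},x^2\Big)
\end{equation*}
would let me write, for each $\ell$, $\exp(itD_{n\ell}^\varepsilon) = 1 + itD_{n\ell}^\varepsilon - \tfrac{t^2}{2}(D_{n\ell}^\varepsilon)^2 + r_{n\ell}(t)$ with $|r_{n\ell}(t)|\leq \tfrac{|t|^3\varepsilon}{6}(D_{n\ell}^\varepsilon)^2$. Taking conditional expectations kills the linear term (martingale difference property), and summing the quadratic terms replaces $(D_{n\ell}^\varepsilon)^2$ by $\sigma_{n\ell}^2$ up to a further error that vanishes thanks to the Lindeberg condition together with the first part of~(\ref{Condition1}). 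A telescoping argument (expanding $\prod_\ell a_\ell - \prod_\ell b_\ell = \sum_\ell (a_\ell - b_\ell)\prod_{k<\ell} a_k \prod_{k>\ell} b_k$ with $|a_\ell|,|b_\ell|\leq 1$) then yields
\begin{equation*}
\Big| \varphi_n(t) - {\rm E}\Big[ \prod_{\ell=1}^n \big(1-\tfrac{t^2}{2}\sigma_{n\ell}^2\big) \Big] \Big| \longrightarrow 0.
\end{equation*}

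Finally, it remains to show that ${\rm E}[\prod_\ell(1-\tfrac{t^2}{2}\sigma_{n\ell}^2)]\to \exp(-t^2/2)$. Using $\log(1-x)=-x+O(x^2)$ for small $x$, the log of the product equals $-\tfrac{t^2}{2}\sum_\ell \sigma_{n\ell}^2 + O(t^4 \sum_\ell \sigma_{n\ell}^4)$. The first sum converges to $1$ in probability by assumption, while the Lindeberg condition forces $\max_\ell \sigma_{n\ell}^2\to 0$ in probability (since $\sigma_{n\ell}^2\leq \varepsilon^2 + {\rm E}[D_{n\ell}^2\,\mathbb{I}[|D_{n\ell}|>\varepsilon]\,|\,\mathcal{F}_{n,\ell-1}]$), so $\sum_\ell \sigma_{n\ell}^4\leq \max_\ell \sigma_{n\ell}^2 \sum_\ell \sigma_{n\ell}^2 \to 0$ in probability. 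The main technical obstacle will be handling the fact that the $\sigma_{n\ell}^2$ are random and one only has convergence \emph{in probability} in~(\ref{Condition1}): I would address this by passing to a subsequence along which convergence holds almost surely, or by bounding all the approximation errors in probability and invoking dominated convergence on the bounded quantity $\prod_\ell (1-\tfrac{t^2}{2}\sigma_{n\ell}^2)$ (after truncating at a level ensuring positivity).
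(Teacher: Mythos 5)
First, a point of comparison: the paper does not prove this statement at all --- it is imported verbatim as Theorem~35.12 of Billingsley (1995) and used as an off-the-shelf tool in the proof of Theorem~\ref{maintheorempower}. So the relevant benchmark is the textbook proof, and your plan is indeed in that family (characteristic functions, truncation via the Lindeberg condition, comparison with a product built from the conditional variances). However, your central step has a genuine gap. In the telescoping identity $\prod_\ell a_\ell-\prod_\ell b_\ell=\sum_\ell\big(\prod_{k<\ell}a_k\big)(a_\ell-b_\ell)\big(\prod_{k>\ell}b_k\big)$ with $a_\ell=\exp(itD_{n\ell})$ and $b_\ell=1-\tfrac{t^2}{2}\sigma^2_{n\ell}$, the trailing factor $\prod_{k>\ell}b_k$ involves $\sigma^2_{nk}$ for $k>\ell$, which are $\mathcal{F}_{n,k-1}$-measurable and hence may depend on $D_{n\ell}$ itself. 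Consequently, conditioning on $\mathcal{F}_{n,\ell-1}$ does \emph{not} annihilate the linear term $itD_{n\ell}$ in the expansion of $a_\ell-b_\ell$: ${\rm E}\big[D_{n\ell}\prod_{k>\ell}b_k\,\big|\,\mathcal{F}_{n,\ell-1}\big]\neq 0$ in general. The classical Lindeberg swap works for independent summands precisely because the replaced factors are deterministic; for martingale differences this ordering problem is the whole difficulty, and your sketch does not address it. (Reversing the telescoping order does not help either: then the trailing factor $\prod_{k>\ell}a_k$ is no longer reducible by the tower property.) A secondary, more routine gap: truncating to $D^{\varepsilon}_{n\ell}=D_{n\ell}\mathbb{I}[|D_{n\ell}|\le\varepsilon]$ destroys the martingale-difference property, so a re-centering by ${\rm E}[D^{\varepsilon}_{n\ell}\,|\,\mathcal{F}_{n,\ell-1}]$ (controlled by the Lindeberg condition) is needed, and the compensator should then be the conditional variance of the modified differences rather than $\sigma^2_{n\ell}$.

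The standard ways to close the main gap are exactly what Billingsley (and McLeish) do. One route is McLeish's device: set $T_n=\prod_\ell(1+itD_{n\ell})$, whose expectation equals one \emph{exactly} by iterated conditioning because $1+itD_{n\ell}$ has conditional mean one and the later factors are not involved at the step where index $\ell$ is integrated out; then write $\exp(itS_n)=T_n\exp\big(-\tfrac{t^2}{2}\sum_\ell D_{n\ell}^2+\sum_\ell r(tD_{n\ell})\big)$ via $e^{ix}=(1+ix)e^{-x^2/2+r(x)}$, and control the exponent using $\sum_\ell D_{n\ell}^2\to 1$ in probability (deduced from the two conditions in~(\ref{Condition1})) together with uniform integrability of $T_n$, obtained by first stopping the array so that $\sum_\ell\sigma^2_{n\ell}$ is uniformly bounded --- a reduction that is legitimate because the first condition in~(\ref{Condition1}) makes the stopping inactive with probability tending to one. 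The other route, Billingsley's own, performs a backward induction on ${\rm E}\big[\exp\big(itS_n+\tfrac{t^2}{2}\sum_\ell\sigma^2_{n\ell}\big)\big]$: at each step the compensating factor $\exp(\tfrac{t^2}{2}\sigma^2_{n\ell})$ is $\mathcal{F}_{n,\ell-1}$-measurable and all factors with larger indices have already been removed, so the measurability obstruction you face never arises. Your last step (showing $\max_\ell\sigma^2_{n\ell}\to 0$ in probability and handling the randomness of the $\sigma^2_{n\ell}$) is fine, but as written the core of the argument would not go through without switching to one of these two devices.
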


Writing~${\rm E}_{n\ell}$ for the conditional expectation with respect to the $\sigma$-algebra~${\cal F}_{n\ell}$ generated by~$\Xb_{n1}, \ldots, \Xb_{n\ell}$, we have
$$
{\rm E}_{n\ell}\big[ {R}_n^{\rm St} \big]
=
\frac{\sqrt{2p_n}}{n \sigma_n}  
\,
\bigg\{
\sum_{1\leq i< j\leq \ell}
\big(
\Xb_{ni}\pr \Xb_{nj} -e_{n1}^2
\big)
+
(n-\ell)
e_{n1}
\sum_{i=1}^\ell
\big(
\Xb_{ni}\pr \thetab_n -e_{n1}
\big)
\bigg\}
\cdot
$$
Note that
$
R_n^{\rm St}
=
\sum_{\ell=1}^n D_{n\ell}
$, where $D_{n\ell}
:= {\rm E}_{n\ell}\big[R_n^{\rm St}\big]-{\rm E}_{n,\ell-1}\big[R_n^{\rm St}\big]
$ rewrites
\begin{equation}
\label{Dnell}
D_{n\ell}
=
\frac{\sqrt{2p_n}}{n \sigma_n}  
\,
\bigg\{
\sum_{i=1}^{\ell-1}
(\Xb_{ni}-e_{n1}\thetab_n)
+
(n-1)
e_{n1}
\thetab_n
\bigg\}\pr
\big(\Xb_{n\ell}-e_{n1} \thetab_n\big)
,
\ \ 
\ell=1,2,\ldots
\end{equation}
(throughout, sums over empty set of indices are defined as being equal to zero). The following lemmas then establish the conditions~(\ref{Condition1}) in the present context.

\begin{Lem} \label{Lemmeoneone}
Let the assumptions of Theorem~\ref{maintheorempower} hold. Then, under~${\rm P}^{(n)}_{F_n}$, 
(i) $\sum_{\ell=1}^n {\rm E} [\sigma^2_{n\ell}]$ converges to one as $\ny$, and 
(ii) ${\rm Var} [\sum_{\ell=1}^n \sigma^2_{n\ell}]$ converges to zero as $\ny$.
\end{Lem}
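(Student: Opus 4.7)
The plan is first to write $\sigma_{n\ell}^2={\rm E}_{n,\ell-1}[D_{n\ell}^2]$ in closed form. Since the factor $\Xb_{n\ell}-e_{n1}\thetab_n$ appearing in~(\ref{Dnell}) is independent of the ${\cal F}_{n,\ell-1}$-measurable bracket multiplying it, I apply Lemma~\ref{moments2}(i) to the second-moment matrix of $\Xb_{n\ell}-e_{n1}\thetab_n$ and split it into the $\thetab_n$-direction (coefficient $\tilde e_{n2}$) and its orthogonal complement (coefficient $f_{n2}/(p_n-1)$); this yields
$$
\sigma_{n\ell}^2 \,=\, \frac{2p_n}{n^2\sigma_n^2}\bigg[\tilde e_{n2}\, A_\ell^2 \,+\, \frac{f_{n2}}{p_n-1}\, B_\ell\bigg]
,
$$
where $A_\ell := \sum_{i=1}^{\ell-1}(\Xb_{ni}\pr\thetab_n-e_{n1})+(n-1)e_{n1}$ and $B_\ell := \|(\mathbf{I}_{p_n}-\thetab_n\thetab_n\pr)\sum_{i=1}^{\ell-1}\Xb_{ni}\|^2$. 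Both quantities are built entirely out of the i.i.d.\ scalars $Y_i:=\Xb_{ni}\pr\thetab_n-e_{n1}$ (mean zero, variance $\tilde e_{n2}$) and vectors $W_{ni}:=(\mathbf{I}_{p_n}-\thetab_n\thetab_n\pr)\Xb_{ni}$, which have mean zero and satisfy ${\rm E}[W_{ni}\pr W_{nj}]=f_{n2}\mathbb{I}[i=j]$ by Lemma~\ref{moments2}(iii).

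For~(i), routine calculations give ${\rm E}[A_\ell^2]=(\ell-1)\tilde e_{n2}+(n-1)^2e_{n1}^2$ and ${\rm E}[B_\ell]=(\ell-1)f_{n2}$. Summing over~$\ell$ and rearranging, $\sum_\ell{\rm E}[\sigma_{n\ell}^2]$ reproduces, term by term, the explicit expression for ${\rm Var}[R_n^{\rm St}]$ derived in the proof of Proposition~\ref{propmoments}, divided by $\sigma_n^2$; this is automatic, since $\sum_\ell D_{n\ell}$ is a telescoping sum of martingale increments and hence $\sum_\ell{\rm E}[\sigma_{n\ell}^2]={\rm Var}[R_n^{\rm St}]/\sigma_n^2$. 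Writing this last ratio as a convex combination, with non-negative weights $(p_n\tilde e_{n2}^2, 2np_ne_{n1}^2\tilde e_{n2}, f_{n2}^2)/\sigma_n^2$, of the three factors $(n-1)/n$, $((n-1)/n)^2$ and $(n-1)p_n/(n(p_n-1))$---each converging to~$1$---yields the claim.

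For~(ii), the triangle inequality in $L^2$ reduces the task to estimating ${\rm Var}[\sum_\ell A_\ell^2]$ and ${\rm Var}[\sum_\ell B_\ell]$ separately. Setting $C_\ell=\sum_{i=1}^{\ell-1}Y_i$, I would expand $\sum_\ell A_\ell^2 = \sum_\ell C_\ell^2 + 2(n-1)e_{n1}\sum_\ell C_\ell + (\textrm{const.})$ and use the identities $\sum_\ell C_\ell^2=\sum_{i,j=1}^{n-1}(n-\max(i,j))Y_iY_j$ and $\sum_\ell B_\ell=\sum_{i,j=1}^{n-1}(n-\max(i,j))W_{ni}\pr W_{nj}$. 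The resulting variances of weighted quadratic forms in i.i.d.\ variables are then expanded via Lemma~\ref{moments4}: the ``diagonal'' pieces (Lemma~\ref{moments4}(i)) generate the fourth moments $\tilde e_{n4}$ and $f_{n4}$, the ``off-diagonal'' ones (Lemma~\ref{moments4}(ii)) produce $\tilde e_{n2}^2$, $f_{n2}^2$ and mixed products. Each contribution has to be shown to be $o(\sigma_n^4)$: assumption~(ii) of Theorem~\ref{maintheorempower} absorbs the $\tilde e_{n4}$-pieces against $(p_n\tilde e_{n2}^2)^2\leq\sigma_n^4$, assumption~(iii) handles the $f_{n4}$-pieces symmetrically against $f_{n2}^4\leq\sigma_n^4$, and assumption~(i)---through whichever clause of the $\min$ vanishes---dispatches both the cross-contribution between $A$- and $B$-pieces and the residual linear-in-$Y_i$ term $4(n-1)^2e_{n1}^2{\rm Var}[\sum_\ell C_\ell]=O(n^5 e_{n1}^2 \tilde e_{n2})$. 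The main obstacle is precisely this last step: organising the many quartic contributions, tracking their orders in $n$ and $p_n$, and matching each against the appropriate summand of $\sigma_n^2\geq p_n\tilde e_{n2}^2\vee 2np_ne_{n1}^2\tilde e_{n2}\vee f_{n2}^2$ via the specific form of assumption~(i)---which is stated as a $\min$ rather than an $o(1)$ on both clauses precisely to cover the two complementary regimes $\tilde e_{n2}\ll ne_{n1}^2$ and $\tilde e_{n2}\gg ne_{n1}^2$ that arise in this bookkeeping.
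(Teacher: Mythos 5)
Your part (i) is fine and is essentially the paper's computation: the martingale identity $\sum_{\ell=1}^n{\rm E}[\sigma_{n\ell}^2]={\rm Var}[R_n^{\rm St}]/\sigma_n^2$ combined with ${\rm E}[A_\ell^2]=(\ell-1)\tilde e_{n2}+(n-1)^2e_{n1}^2$ and ${\rm E}[B_\ell]=(\ell-1)f_{n2}$ reproduces exactly the paper's display~(\ref{defsn}), and your convex-combination remark is a clean way to conclude without deciding which term dominates~$\sigma_n^2$.

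Part (ii), however, has a genuine gap in the bookkeeping. After your reduction, the orthogonal-complement block contributes, up to the prefactor, the weighted quadratic form $\frac{f_{n2}}{p_n-1}\sum_{i,j\le n-1}(n-\max(i,j))\,W_{ni}\pr W_{nj}$, and its off-diagonal part ($i\neq j$) is governed by ${\rm Var}[W_{ni}\pr W_{nj}]$. With only the moments you record (${\rm E}[W_{ni}\pr W_{nj}]=f_{n2}\mathbb{I}[i=j]$ and the fourth moments $f_{n4}$), the best available bound is ${\rm Var}[W_{ni}\pr W_{nj}]\le{\rm E}[\|W_{ni}\|^2\|W_{nj}\|^2]=f_{n2}^2$, and the resulting contribution to ${\rm Var}[\sum_\ell\sigma_{n\ell}^2]$ is of order $f_{n2}^4/\sigma_n^4$, which is \emph{not} $o(1)$ in general (near uniformity, $f_{n2}^2\to 1$ and $\sigma_n^2\to 1$). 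None of assumptions (i)--(iii) rescues this piece, and your accounting --- (ii) for the $\tilde e_{n4}$-pieces, (iii) for the $f_{n4}$-pieces, (i) for the cross-terms and the linear term --- assigns nothing to it. The missing ingredient is structural rather than an assumption: writing $W_{ni}\pr W_{nj}=v_{ni}v_{nj}\,\Sb_{ni}\pr\Sb_{nj}$ and using ${\rm E}[(\Sb_{ni}\pr\Sb_{nj})^2]=1/(p_n-1)$ (Lemma~\ref{ref1}(ii), i.e.\ Lemma~\ref{moments2}(iv)) gives ${\rm Var}[W_{ni}\pr W_{nj}]=f_{n2}^2/(p_n-1)$; the extra $1/(p_n-1)$ is precisely what makes this contribution $O(1/p_n)=o(1)$, as in the paper's bound for the term called ${\rm Var}[C_n^{(2)}]$ there.

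Two smaller corrections to the same step. First, Lemma~\ref{moments4} is not the right tool for these variances: it computes quartic moments of the quantities $\Delta_{i\ell}$ sharing the common index~$\ell$ (it serves the Lindeberg condition in Lemma~\ref{THElemmaspher}), whereas what you need here are the variances of $Y_iY_j$ and $W_{ni}\pr W_{nj}$ for distinct pairs, i.e.\ Lemma~\ref{moments2}(ii) and~(iv). Second, your attribution of assumption~(i) is off: once the $L^2$ triangle inequality has separated the blocks there is no $A$--$B$ cross-contribution left to treat, while assumption~(i) is in fact indispensable for the off-diagonal $Y_iY_j$ block, whose contribution is of order $p_n^2\tilde e_{n2}^4/\sigma_n^4$ and is bounded by whichever of $(\tilde e_{n2}/(ne_{n1}^2))^2$ and $(p_n\tilde e_{n2}^2/f_{n2}^2)^2$ is small, in addition to the linear-in-$Y_i$ term you mention. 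With these repairs the plan does go through and coincides with the paper's argument.
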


\begin{Lem}
\label{THElemmaspher}
Let the assumptions of Theorem~\ref{maintheorempower} hold and fix~$\varepsilon>0$. Then, under~${\rm P}^{(n)}_{F_n}$,  $\sum_{\ell=1}^n {\rm E}\big[(D_{n\ell})^2 \, {\mathbb I}[| D_{n\ell}| > \varepsilon]\big]\!\to 0$ as~$n\to \infty$.  
\end{Lem}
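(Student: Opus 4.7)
The plan is to establish the stronger Lyapunov-type condition
\[
\sum_{\ell=1}^n {\rm E}\bigl[D_{n\ell}^4\bigr] \longrightarrow 0 \qquad (n\to\infty),
\]
from which the Lindeberg condition follows via the elementary bound ${\rm E}[X^2\mathbb{I}[|X|>\varepsilon]]\leq \varepsilon^{-2}\,{\rm E}[X^4]$. To compute ${\rm E}[D_{n\ell}^4]$, I would condition on $\mathcal{F}_{n,\ell-1}$ and exploit that $Y_{n\ell}:=\Xb_{n\ell}-e_{n1}\thetab_n$ is independent of this $\sigma$-algebra. Writing the $\mathcal{F}_{n,\ell-1}$-measurable prefactor of~(\ref{Dnell}) as $\mathbf{W}_{n,\ell-1}=a_{\ell-1}\thetab_n+\mathbf{w}_{\ell-1}$ with $\mathbf{w}_{\ell-1}'\thetab_n=0$, and applying the tangent-normal decomposition $Y_{n\ell}=(u_{n\ell}-e_{n1})\thetab_n+v_{n\ell}\Sb_{n\ell}$, the independence of $u_{n\ell}$ from $\Sb_{n\ell}$ together with the symmetry of $\Sb_{n\ell}$ on the equator sphere $\mathcal{S}^{p_n-2}$ kills all odd powers of $\mathbf{w}_{\ell-1}'\Sb_{n\ell}$ and produces
\[
{\rm E}_{n,\ell-1}\bigl[(\mathbf{W}_{n,\ell-1}'Y_{n\ell})^4\bigr]
=a_{\ell-1}^4\,\tilde{e}_{n4}
+\frac{6\,a_{\ell-1}^2\,\|\mathbf{w}_{\ell-1}\|^2}{p_n-1}\,{\rm E}[v_{n1}^2(u_{n1}-e_{n1})^2]
+\frac{3\,\|\mathbf{w}_{\ell-1}\|^4\, f_{n4}}{p_n^2-1},
\]
where the factors $1/(p_n-1)$ and $3/(p_n^2-1)$ come from Lemma~\ref{ref1}(i) and its fourth-moment analogue applied to the uniform vector $\Sb_{n\ell}$ on the equator.

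Taking the unconditional expectation is then a direct computation. Since $a_{\ell-1}=\sum_{i=1}^{\ell-1}(u_{ni}-e_{n1})+(n-1)e_{n1}$ is a sum of $(\ell-1)$ centered i.i.d.\ variables plus the deterministic drift $M:=(n-1)e_{n1}$, a binomial expansion combined with the standard formula for the fourth moment of an i.i.d.\ sum yields ${\rm E}[a_{\ell-1}^4]$ as an explicit polynomial in $(\ell-1)$ with coefficients involving $\tilde{e}_{n2},\tilde{e}_{n3},\tilde{e}_{n4}$ and $M$. Similarly, expanding $\|\mathbf{w}_{\ell-1}\|^4$ and using ${\rm E}[\Sb_{ni}\mid v_{ni}]=\mathbf{0}$ together with Lemma~\ref{ref1}(ii) gives ${\rm E}[\|\mathbf{w}_{\ell-1}\|^4]=(\ell-1)f_{n4}+(\ell-1)(\ell-2)f_{n2}^2\bigl(1+2/(p_n-1)\bigr)$; analogous considerations handle the cross term ${\rm E}[a_{\ell-1}^2\|\mathbf{w}_{\ell-1}\|^2]$. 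Summing over $\ell\leq n$ converts every monomial $(\ell-1)^k$ into one of order $n^{k+1}$.

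Multiplying by $4p_n^2/(n^4\sigma_n^4)$, the resulting bound on $\sum_\ell {\rm E}[D_{n\ell}^4]$ splits into a handful of blocks, each of one of the forms $p_n^2\tilde{e}_{n4}/(n^2\sigma_n^4)$, $p_n^2\tilde{e}_{n2}^2/(n\sigma_n^4)$, $p_n^2 n^3 e_{n1}^4 \tilde{e}_{n2}/(n^4\sigma_n^4)$, $p_n f_{n4}/(n^2\sigma_n^4)$, $f_{n2}^2/(n\sigma_n^4)$, or an obvious variant thereof. Using the three elementary lower bounds $\sigma_n^2\geq p_n\tilde{e}_{n2}^2$, $\sigma_n^2\geq f_{n2}^2$ and $\sigma_n^2\geq 2np_n e_{n1}^2\tilde{e}_{n2}$ supplied by Proposition~\ref{propmoments}, the blocks carrying $\tilde{e}_{n4}$ and $f_{n4}$ are negligible by assumptions (ii) and (iii); the remaining blocks are controlled via assumption (i). The main obstacle is the bookkeeping at this final step: condition (i) is a min-condition, so one has to split into the two cases $p_n\tilde{e}_{n2}^2/f_{n2}^2=o(1)$ and $\tilde{e}_{n2}/(ne_{n1}^2)=o(1)$, and in each case track which of the three lower bounds on $\sigma_n^2$ makes every remaining cross monomial $o(1)$.
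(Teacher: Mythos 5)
Your overall route is sound and is in substance the paper's own: the paper also reduces the Lindeberg condition to fourth-moment bounds (via Cauchy--Schwarz and Chebychev rather than the Lyapunov inequality, but the quantities controlled are the same, namely ${\rm E}[D_{n\ell}^4]$ uniformly in $\ell$), and your conditional computation of ${\rm E}_{n,\ell-1}[(\mathbf{W}_{n,\ell-1}'Y_{n\ell})^4]$ together with the formula for ${\rm E}[\|\mathbf{w}_{\ell-1}\|^4]$ is exactly the content of Lemma~\ref{moments4} and Lemma~\ref{ref1}(ii)-(iii); your treatment of the drift $(n-1)e_{n1}$ by binomial expansion replaces the paper's cruder splitting $(a+b)^4\le 8(a^4+b^4)$, which is fine.

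The flaw is in the final accounting, which you yourself flag as the main obstacle. The blocks you list are not the ones that actually occur, and several of them, taken literally, are \emph{not} $o(1)$ under (i)--(iii): for instance $f_{n2}^2/(n\sigma_n^4)\le 1/(nf_{n2}^2)$ and $p_n^2\tilde{e}_{n2}^2/(n\sigma_n^4)\le 1/(n\tilde{e}_{n2}^2)$ can diverge, and no case split on the min-condition (i) repairs this, since (i) says nothing about the size of $nf_{n2}^2$ or $n\tilde{e}_{n2}^2$. The point you are missing is that in the correct expansion \emph{every} block inherits a fourth-moment factor from the $\ell$-th observation: each term of your conditional formula is multiplied by $\tilde{e}_{n4}$, by ${\rm E}[v_{n1}^2(u_{n1}-e_{n1})^2]/(p_n-1)\le (f_{n4}\tilde{e}_{n4})^{1/2}/(p_n-1)$, or by $3f_{n4}/(p_n^2-1)$. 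Once these factors are kept (and the odd cross terms in ${\rm E}[a_{\ell-1}^4]$ are absorbed via $|\tilde{e}_{n3}|\le(\tilde{e}_{n2}\tilde{e}_{n4})^{1/2}$), the three elementary lower bounds $\sigma_n^2\ge p_n\tilde{e}_{n2}^2$, $\sigma_n^2\ge f_{n2}^2$ and $\sigma_n^2\ge 2np_ne_{n1}^2\tilde{e}_{n2}$ reduce every block to a quantity of the form $\tilde{e}_{n4}/(n\tilde{e}_{n2}^2)$, $f_{n4}/(nf_{n2}^2)$, their squares, or geometric means thereof, so that assumptions (ii)--(iii) alone finish the proof — this is precisely what the paper obtains, namely ${\rm E}[D_{n\ell}^4]\le \frac{C}{n}\big(\frac{\tilde{e}_{n4}}{n\tilde{e}_{n2}^2}+\frac{f_{n4}}{nf_{n2}^2}\big)=o(1/n)$ uniformly in $\ell$. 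Assumption (i) plays no role in this lemma; it is needed for the conditional-variance statement of Lemma~\ref{Lemmeoneone}(ii), not here.
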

\vspace{-1mm}

In the rest of the paper, $C$ is a positive constant that may change from line to line. 
\vspace{1mm} 

{\sc Proof of Lemma~\ref{Lemmeoneone}}. 
(i) Note that
\begin{eqnarray*}
\sigma^2_{n\ell}
&=&
\frac{2p_n}{n^2\sigma^2_n}  
\bigg\{ 
\sum_{i=1}^{\ell-1} 
(\Xb_{ni}-e_{n1}\thetab_n)
+
(n-1) e_{n1} \thetab_n
\bigg\}\pr
{\rm E}
\Big[
\big(
\Xb_{n\ell}-e_{n1}\thetab_n 
\big)
\big(
\Xb_{n\ell}-e_{n1}\thetab_n 
\big)\pr
\Big]
\\[1mm]
& &
\hspace{15mm}
\times \,
\bigg\{ 
\sum_{j=1}^{\ell-1} 
(\Xb_{nj}-e_{n1}\thetab_n)
+
(n-1) e_{n1} \thetab_n
\bigg\}.
\end{eqnarray*}
By using Lemma~\ref{moments2}(i), we obtain
\begin{eqnarray}
\sigma^2_{n\ell}
&\!\!=\!\!&
\frac{2p_n \tilde{e}_{n2}}{n^2\sigma^2_n}  
\Bigg\{ 
\sum_{i,j=1}^{\ell-1} 
(\Xb_{ni}\pr\thetab_n-e_{n1})
(\Xb_{nj}\pr\thetab_n-e_{n1})
+
2(n-1) e_{n1}
\sum_{i=1}^{\ell-1} 
(\Xb_{ni}\pr\thetab_n-e_{n1})
+(n-1)^2e_{n1}^2
 \Bigg\}
\nonumber
\\[1mm]
& &
\hspace{15mm}
+\, 
\frac{2p_n f_{n2}}{(p_n-1)n^2\sigma^2_n}  
\sum_{i,j=1}^{\ell-1} 
\Xb_{ni}\pr
(\mathbf{I}_{p_n}-\thetab_n \thetab_n\pr)
\Xb_{nj}
.
\label{sig2nell}
\end{eqnarray}
Therefore
\begin{equation}\label{espsig2nell}
{\rm E}[\sigma^2_{n\ell}]
=
\frac{2p_n \tilde{e}_{n2}}{n^2\sigma^2_n}  
\bigg\{ 
(\ell-1) 
\tilde{e}_{n2}
+
0
+(n-1)^2e_{n1}^2
 \bigg\}
+
\frac{2p_n (\ell-1) f_{n2}^2}{(p_n-1)n^2\sigma^2_n} ,
\end{equation}
where we have used Lemma~\ref{moments2}(iii).
This yields
\begin{equation}
\label{defsn}
s_n^2
:=
\sum_{\ell=1}^n
{\rm E}[\sigma^2_{n\ell}]
=
\frac{(n-1)p_n \tilde{e}_{n2}^2}{n\sigma^2_n}  
+
\frac{2p_n \tilde{e}_{n2}}{n\sigma^2_n}  
(n-1)^2e_{n1}^2
+
\frac{(n-1)p_n f_{n2}^2}{(p_n-1)n\sigma^2_n}  
\to 1 
\end{equation}
as~$n\to\infty$, as was to be shown.
\vspace{2mm}

(ii) From~(\ref{sig2nell}), we obtain
$
{\rm Var}
\big[
\sum_{\ell=1}^n \sigma^2_{n\ell}
\big]
\leq C
\big( 
{\rm Var}
\big[
A_{n}
\big]
+
{\rm Var}
\big[
B_{n}
\big]
+
{\rm Var}
\big[
C_{n}
\big]
\big)
,
$
where
$
A_{n}
:=
\frac{p_n \tilde{e}_{n2}}{n^2\sigma^2_n}  
\sum_{\ell=1}^n
\sum_{i,j=1}^{\ell-1} 
(\Xb_{ni}\pr\thetab_n-e_{n1})
(\Xb_{nj}\pr\thetab_n-e_{n1})
$, 
$B_{n}
:=
\frac{p_n e_{n1} \tilde{e}_{n2}}{n\sigma^2_n}  
\sum_{\ell=1}^n
\sum_{i=1}^{\ell-1} 
(\Xb_{ni}\pr\thetab_n-e_{n1})
$
and
$
C_n 
:=
\frac{f_{n2}}{n^2\sigma^2_n}  
\sum_{\ell=1}^n
\sum_{i,j=1}^{\ell-1} 
\Xb_{ni}\pr
(\mathbf{I}_{p_n}-\thetab_n \thetab_n\pr)
\Xb_{nj}
. 
$ 
We establish the result by showing that, under the assumptions considered, ${\rm Var}[A_n]$, ${\rm Var}[B_n]$ and ${\rm Var}[C_n]$ all are $o(1)$ as~$\ny$. 
We start with~$A_n$, which we split into
\begin{eqnarray*}
A_n
&\!\!\!=\!\!\!&
\frac{p_n \tilde{e}_{n2}}{n^2\sigma^2_n}  
\sum_{\ell=1}^n
\sum_{i=1}^{\ell-1} 
(\Xb_{ni}\pr\thetab_n-e_{n1})^2
+
\frac{2p_n \tilde{e}_{n2}}{n^2\sigma^2_n}  
\sum_{\ell=1}^n
\sum_{1\leq i<j\leq \ell-1} 
(\Xb_{ni}\pr\thetab_n-e_{n1})
(\Xb_{nj}\pr\thetab_n-e_{n1})
\\[2mm]
&\!\!\!=\!\!\!&
\frac{p_n \tilde{e}_{n2}}{n^2\sigma^2_n}  
\sum_{i=1}^{n-1}
\,(n-i) 
(\Xb_{ni}\pr\thetab_n-e_{n1})^2
+
\frac{2p_n \tilde{e}_{n2}}{n^2\sigma^2_n}  
\sum_{1\leq i<j\leq n-1} 
(n-j)
(\Xb_{ni}\pr\thetab_n-e_{n1})
(\Xb_{nj}\pr\thetab_n-e_{n1})
,
\end{eqnarray*}
that is, into~$A_{n}^{(1)}+A_{n}^{(2)}$, say. Clearly,
\begin{eqnarray*}
\lefteqn{
\hspace{-10mm}
{\rm Var}
\big[
A_{n}^{(1)}
\big]
=
\frac{p_n^2 \tilde{e}_{n2}^2}{n^4\sigma^4_n}  
\,
\sum_{i=1}^{n-1}
\,
(n-i)^2
\,
{\rm Var}
\big[
(\Xb_{ni}\pr\thetab_n-e_{n1})^2
\big]
}
\\[3mm]
& & 
\hspace{15mm}
\leq 
C\,
\frac{p_n^2 \tilde{e}_{n2}^2\big(\tilde{e}_{n4}-\tilde{e}_{n2}^2\big)}{n \sigma^4_n} 
\leq 
C\,
\frac{p_n^2 \tilde{e}_{n2}^2\big(\tilde{e}_{n4}-\tilde{e}_{n2}^2\big)}{n (p_n \tilde{e}_{n2}^2)^2} 
=
C\,
\Big(
\frac{\tilde{e}_{n4}}{n \tilde{e}_{n2}^2} 
-\frac{1}{n}
\Big),
\end{eqnarray*}
which, by assumption, is $o(1)$ as~$n\to\infty$. Since $(\Xb_{ni}\pr\thetab_n-e_{n1})
(\Xb_{nj}\pr\thetab_n-e_{n1})$, $i<j$, and $(\Xb_{nk}\pr\thetab_n-e_{n1})
(\Xb_{n\ell}\pr\thetab_n-e_{n1})$, $k<\ell$, are uncorrelated as soon as $(i,j)\neq(k,\ell)$, we obtain
$$
{\rm Var}
\big[
A_{n}^{(2)}
\big]
=
\frac{4p^2_n \tilde{e}_{n2}^2}{n^4\sigma^4_n}  
\sum_{1\leq i<j\leq n-1} 
(n-j)^2
\,
{\rm Var}
\big[
(\Xb_{ni}\pr\thetab_n-e_{n1})
(\Xb_{nj}\pr\thetab_n-e_{n1})
\big]
\leq
C
\,
\frac{p^2_n \tilde{e}_{n2}^4}{\sigma^4_n}  
\cdot
$$
In view of the majorations
$$
\frac{p^2_n \tilde{e}_{n2}^4}{\sigma^4_n}
\leq
C
\frac{p^2_n \tilde{e}_{n2}^4}{(2n p_n e_{n1}^2 \tilde{e}_{n2})^2}  
=
C
\Big(
\frac{\tilde{e}_{n2}}{n e_{n1}^2}  
\Big)^2
\quad
\textrm{and}
\quad
\frac{p^2_n \tilde{e}_{n2}^4}{\sigma^4_n}
\leq
C\Big(\frac{p_n \tilde{e}_{n2}^2}{f_{n2}^2} \Big)^2 
,
$$
${\rm Var}
\big[
A_{n}^{(2)}
\big]
$, by assumption, is $o(1)$ as~$n\to\infty$. Therefore, ${\rm Var}[A_n]$ is indeed $o(1)$ as~$n\to\infty$.

Turning to~$B_n$, 
$$
{\rm Var}[B_n]
=
\frac{p_n^2 e_{n1}^2 \tilde{e}_{n2}^2}{n^2\sigma^4_n}  
{\rm Var}\Big[
\sum_{i=1}^{n-1} (n-i)
(\Xb_{ni}\pr\thetab_n-e_{n1})
\Big]
=
\frac{p_n^2 e_{n1}^2 \tilde{e}_{n2}^2}{n^2\sigma^4_n}  
\sum_{i=1}^{n-1} (n-i)^2
\tilde{e}_{n2}
\leq 
C 
\frac{n p_n^2 e_{n1}^2 \tilde{e}_{n2}^3}{\sigma^4_n}  
,
$$
which is $o(1)$ as~$n\to\infty$ since it can be upper-bounded by
$$
C 
\frac{n p_n^2 e_{n1}^2 \tilde{e}_{n2}^3}{(2n p_n e_{n1}^2 \tilde{e}_{n2})^2}  
=
C
\frac{\tilde{e}_{n2}}{n e_{n1}^2}  
\quad
\textrm{and by}
\quad
C 
\frac{n p_n^2 e_{n1}^2 \tilde{e}_{n2}^3}{n p_n e_{n1}^2 \tilde{e}_{n2} f_{n2}^2} 
=
C 
\frac{p_n \tilde{e}_{n2}^2}{f_{n2}^2} 
\cdot 
$$

Finally, we consider~$C_n$. Proceeding as for~$A_n$, we split $C_n$ into
\begin{eqnarray*}
C_n
&\!\!\!=\!\!\!&
\frac{f_{n2}}{n^2\sigma^2_n}  
\sum_{\ell=1}^n
\sum_{i=1}^{\ell-1} 
\Xb_{ni}\pr
(\mathbf{I}_{p_n}-\thetab_n \thetab_n\pr)
\Xb_{ni}
+
\frac{2f_{n2}}{n^2\sigma^2_n}  
\sum_{\ell=1}^n
\sum_{1\leq i<j\leq \ell-1} 
\Xb_{ni}\pr
(\mathbf{I}_{p_n}-\thetab_n \thetab_n\pr)
\Xb_{nj}
\\[2mm]
&\!\!\!=\!\!\!&
\frac{f_{n2}}{n^2\sigma^2_n}  
\sum_{i=1}^{n-1}
\,(n-i) 
\Xb_{ni}\pr
(\mathbf{I}_{p_n}-\thetab_n \thetab_n\pr)
\Xb_{ni}
+
\frac{2f_{n2}}{n^2\sigma^2_n}  
\sum_{1\leq i<j\leq n-1} 
(n-j)
\Xb_{ni}\pr
(\mathbf{I}_{p_n}-\thetab_n \thetab_n\pr)
\Xb_{nj}
,
\end{eqnarray*}
that is, into~$C_{n}^{(1)}+C_{n}^{(2)}$, say. 
Clearly,
$$
{\rm Var}
\big[
C_{n}^{(1)}
\big]
=
\frac{f_{n2}^2}{n^4\sigma^4_n}  
\,
\sum_{i=1}^{n-1}
\,
(n-i)^2
\,
{\rm Var}
\big[
\Xb_{ni}\pr
(\mathbf{I}_{p_n}-\thetab_n \thetab_n\pr)
\Xb_{ni}
\big]
\leq 
C
\frac{f_{n2}^2 (f_{n4}-f_{n2}^2)}{n\sigma^4_n}  
\leq 
C
\frac{f_{n4}-f_{n2}^2}{n f_{n2}^2}  
,
$$
so that ${\rm Var}\big[C_{n}^{(1)}\big]$ is $o(1)$ as~$n\to\infty$. 
Since $\Xb_{ni}\pr
(\mathbf{I}_{p_n}-\thetab_n \thetab_n\pr)
\Xb_{nj}
$, $i<j$, and $\Xb_{nk}\pr
(\mathbf{I}_{p_n}-\thetab_n \thetab_n\pr)
\Xb_{n\ell}
$, $k<\ell$, are uncorrelated as soon as $(i,j)\neq(k,\ell)$, we obtain
$$
{\rm Var}
\big[
C_{n}^{(2)}
\big]
=
\frac{4f_{n2}^2}{n^4\sigma^4_n}  
\sum_{1\leq i<j\leq n-1} 
(n-j)^2
\,
{\rm Var}
\big[
\Xb_{ni}\pr
(\mathbf{I}_{p_n}-\thetab_n \thetab_n\pr)
\Xb_{nj}
\big]
\leq
C
\frac{f_{n2}^4}{\sigma^4_n(p_n-1)}  
\leq
\frac{C}{p_n}  
\cdot
$$
Therefore, ${\rm Var}[C_n]$ is also $o(1)$ as~$n\to\infty$, which establishes the result.
\cqfd
\vspace{3mm}


{\sc Proof of Lemma~\ref{THElemmaspher}}.
the Cauchy-Schwarz and Chebychev inequalities yield
\begin{equation}
\label{termeLemme3}
\sum_{\ell=1}^n {\rm E}\big[D_{n\ell}^2 \mathbb{I}[|D_{n\ell}|>\varepsilon]\big]
\leq 
\sum_{\ell=1}^n \sqrt{{\rm E}\big[D_{n\ell}^4 \big]{\rm P}\big[| D_{n\ell}|>\varepsilon\big]}
\leq 
\frac{1}{\varepsilon}\sum_{\ell=1}^n \sqrt{{\rm E}\big[D_{n\ell}^4 \big]{\rm Var}\big[D_{n\ell}\big]}
.
\end{equation}

Recalling that $\sigma_{n\ell}^2={\rm E}\big[D_{n\ell}^2 \,|\,\mathcal{F}_{n,\ell-1}\big]$, (\ref{espsig2nell}) provides
$$
{\rm Var}[D_{n\ell}]
\leq 
{\rm E}\big[D_{n\ell}^2\big]
=
{\rm E}[\sigma_{n\ell}^2]
\leq 
\frac{2p_n}{n\sigma_n^2}
\bigg(
\tilde{e}_{n2}^2+ne_{n1}^2\tilde{e}_{n2}+\frac{f_{n2}^2}{p_n-1}
\bigg)
\leq 
\frac{C}{n}
\cdot
$$

Using~(\ref{Dnell}) and the inequalities $(a+b)^4\leq 8(a^4+b^4)$ and~$\sigma^2_n\geq 2n p_ne_{n1}^2 \tilde{e}_{n2}$ then yields
\begin{eqnarray}
\nonumber
{\rm E}\big[D_{n\ell}^4\big]
&\leq&
\frac{Cp_n^2}{n^4\sigma_n^4}
\Bigg(
{\rm E}
\bigg[
\bigg(
\sum_{i=1}^{\ell-1}
\,
(\Xb_{ni}-e_{n1}\thetab_n)'(\Xb_{n\ell}-e_{n1}\thetab_n)
\bigg)^4
\bigg]
+
n^4e_{n1}^4 {\rm E}\big[(\Xb_{n\ell}\pr \thetab_n-e_{n1})^4\big]
\Bigg)
\\[2mm]
&\leq &
\frac{Cp_n^2}{n^4\sigma_n^4}
\,
{\rm E}
\bigg[
\bigg(
\sum_{i=1}^{\ell-1}
\,
(\Xb_{ni}-e_{n1}\thetab_n)'(\Xb_{n\ell}-e_{n1}\thetab_n)
\bigg)^4
\bigg]
+
\frac{C \tilde{e}_{n4}}{n^2 \tilde{e}_{n2}^2}
.
\label{borneDnell4}
\end{eqnarray}
Applying Lemma~\ref{moments4}, we have
\begin{multline*}
{\rm E}
\bigg[
\bigg(
\sum_{i=1}^{\ell-1}
\,
(\Xb_{ni}-e_{n1}\thetab_n)'(\Xb_{n\ell}-e_{n1}\thetab_n)
\bigg)^4
\bigg]
=
(\ell-1)
\left(
\tilde{e}_{n4}^2
+
\frac{6}{p_n-1}{\rm E}\big[v_{ni}^2(u_{ni}-e_{n1})^2\big]^2
+
\frac{3f_{n4}^2}{p_n^2-1}
\right)
\\[2mm]
+
3(\ell-1)(\ell-2)
\left(
\tilde{e}_{n2}^2\tilde{e}_{n4}
+
\frac{2\tilde{e}_{n2}f_{n2}}{p_n-1} {\rm E}\big[v_{ni}^2(u_{ni}-e_{n1})^2\big]
+
\frac{f_{n2}^2f_{n4}}{(p_n-1)^2} 
\right),
\end{multline*}
By Cauchy-Schwarz, this yields
\begin{eqnarray*}
\lefteqn{
\frac{p_n^2}{n^4\sigma_n^4}
\,
{\rm E}
\bigg[
\bigg(
\sum_{i=1}^{\ell-1}
\,
(\Xb_{ni}-e_{n1}\thetab_n)'(\Xb_{n\ell}-e_{n1}\thetab_n)
\bigg)^4
\bigg]
}
\\[2mm]
& & 
\hspace{2mm}
\leq
\frac{1}{n^3\sigma_n^4}
\left( 
p_n^2\tilde{e}_{n4}^2
+
 6p_nf_{n4}\tilde{e}_{n4}
+
3f_{n4}^2
\right)
+
\frac{3}{n^2\sigma_n^4}
\left( 
 p_n^2 \tilde{e}_{n2}^2\tilde{e}_{n4}
+
 2p_n \tilde{e}_{n2}f_{n2}f_{n4}^{1/2}\tilde{e}_{n4}^{1/2} 
+
f_{n2}^2f_{n4} 
\right)
\\[2mm]
& & 
\hspace{2mm}
\leq
\frac{C}{n^3}
\bigg( 
\frac{\tilde{e}_{n4}^2}{\tilde{e}_{n2}^4}
+
\frac{f_{n4}\tilde{e}_{n4}}{f_{n2}^2\tilde{e}_{n2}^2}
+
\frac{f_{n4}^2}{f_{n2}^4}
\bigg)
+
\frac{C}{n^2}
\bigg( 
\frac{\tilde{e}_{n4}}{\tilde{e}_{n2}^2}
+ 
\Big(\frac{f_{n4}\tilde{e}_{n4}}{f_{n2}^2\tilde{e}_{n2}^2}\Big)^{1/2}
+
\frac{f_{n4}}{f_{n2}^2}
\bigg)
.
\end{eqnarray*}
Plugging into~(\ref{borneDnell4}), we conclude that 
$$
{\rm E}\big[D_{n\ell}^4\big]
\leq
\frac{C}{n^3}
\bigg( 
\frac{\tilde{e}_{n4}^2}{\tilde{e}_{n2}^4}
+
\frac{f_{n4}\tilde{e}_{n4}}{f_{n2}^2\tilde{e}_{n2}^2}
+
\frac{f_{n4}^2}{f_{n2}^4}
\bigg)
+
\frac{C}{n^2}
\bigg( 
\frac{\tilde{e}_{n4}}{\tilde{e}_{n2}^2}
+ 
\Big(\frac{f_{n4}\tilde{e}_{n4}}{f_{n2}^2\tilde{e}_{n2}^2}\Big)^{1/2}
+
\frac{f_{n4}}{f_{n2}^2}
\bigg)
\leq
\frac{C}{n}
\bigg( 
\frac{\tilde{e}_{n4}}{n\tilde{e}_{n2}^2}
+
\frac{f_{n4}}{nf_{n2}^2}
\bigg)
,
$$
which, by assumption, is $o(1/n)$ as~$\ny$. 

All majorations and $o$'s above being uniform in~$\ell$, we finally obtain that 
$$
\sum_{\ell=1}^n \sqrt{{\rm E}\big[D_{n\ell}^4 \big]{\rm Var}\big[D_{n\ell}\big]}
\leq
C \Big( n \max_{\ell=1,\ldots,n} {\rm E}\big[D_{n\ell}^4 \big] \Big)^{1/2}
\to 0
$$
as~$\ny$, which, in view of~(\ref{termeLemme3}), establishes the result.
\cqfd


\vspace{2mm}

%

{\sc Proof of Theorem~\ref{Powerprop}}.
From Theorem~\ref{maintheorempower}, we have that, as~$\ny$,
\begin{eqnarray*}
\lefteqn{
\bigg|
{\rm P}\n_{F_n}[R_n^{\rm St} >  z_\alpha]
-
\Big(
1- 
\Phi\Big(\! z_\alpha- \frac{\tau^2}{\sqrt{2}}\Big)
\Big)
\bigg| 
=
\bigg|
{\rm P}\n_{F_n}[R_n^{\rm St} \leq  z_\alpha]
-
\Phi\Big(\! z_\alpha- \frac{\tau^2}{\sqrt{2}}\Big)
\bigg| 
}
\nonumber
\\[3mm]
& & 
\hspace{0mm}
\leq
\sup_{z\in \R}
\bigg|
{\rm P}\n_{F_n}\Big[
\frac{R_n^{\rm St}-{\rm E}[{R}_n^{\rm St}]}{\sigma_n}
\leq 
z
\Big]
-
\Phi
(z)\bigg|
+
\bigg|
\Phi
\Big(
\frac{z_\alpha-{\rm E}[{R}_n^{\rm St}]}{\sigma_n}
\Big)
-
\Phi\bigg(\! z_\alpha- \frac{\tau^2}{\sqrt{2}}\bigg)
\bigg| 
\to 0
,
\end{eqnarray*}
where we used Lemma~2.11 from~\cite{van1998}.
\cqfd


\section*{Acknowledgements}

We would like to thank the Associate Editor and three anonymous referees for their insightful comments and suggestions, that led to a substantial improvement of a previous version of this work. We are particularly grateful to the referee who opened a new field of research to us by encouraging us to look at the results from a minimax separation rate point of view.


\begin{supplement}[id=suppA]
  \stitle{Supplement to ``Testing Uniformity on High-Dimensional Spheres against Rotationally Symmetric Alternatives"}
  \slink[doi]{completed by the typesetter}
  \sdatatype{.pdf}
  \sdescription{In this supplementary article, we derive the \mbox{fixed-$p$} asymptotic non-null distribution of the Rayleigh test statistic in~(\ref{fixedplaw}), and we show that,  under FvML distributions, the conditions~(i)-(iii) of Theorem~\ref{maintheorempower} always hold.}
\end{supplement}

\bibliographystyle{imsart-nameyear.bst}
\bibliography{hdLAN.bib}           
\vspace{3mm} 


\end{document}